\pgfplotsset{compat=newest}
\DeclareMathOperator{\Div}{div}
\newcommand{\Th}{\mathcal{T}_h}
\newcommand{\assign}{:=}
\newcommand{\nobracket}{}
\newcommand{\tmop}[1]{\text{#1}}
\newcommand{\tmtextit}[1]{{\itshape{#1}}}
\newtheorem{lemma}{Lemma}
\newtheorem{theorem}{Theorem}
\newtheorem{rmk}{Remark}
\newtheorem{assumption}{Assumption}
\newcommand{\MD}[1]{\textcolor{black}{#1}}
\newcommand{\vanessa}[1]{\textcolor{black}{#1}}
\newcommand{\AL}[1]{\textcolor{black}{#1}}
\newcommand{\logLogSlopeTriangle}[5]
{

    \pgfplotsextra
    {
        \pgfkeysgetvalue{/pgfplots/xmin}{\xmin}
        \pgfkeysgetvalue{/pgfplots/xmax}{\xmax}
        \pgfkeysgetvalue{/pgfplots/ymin}{\ymin}
        \pgfkeysgetvalue{/pgfplots/ymax}{\ymax}

        \pgfmathsetmacro{\xArel}{#1}
        \pgfmathsetmacro{\yArel}{#3}
        \pgfmathsetmacro{\xBrel}{#1-#2}
        \pgfmathsetmacro{\yBrel}{\yArel}
        \pgfmathsetmacro{\xCrel}{\xArel}

        \pgfmathsetmacro{\lnxB}{\xmin*(1-(#1-#2))+\xmax*(#1-#2)} 
        \pgfmathsetmacro{\lnxA}{\xmin*(1-#1)+\xmax*#1} 
        \pgfmathsetmacro{\lnyA}{\ymin*(1-#3)+\ymax*#3} 
        \pgfmathsetmacro{\lnyC}{\lnyA+#4*(\lnxA-\lnxB)}
        \pgfmathsetmacro{\yCrel}{\lnyC-\ymin)/(\ymax-\ymin)} 
        
        \coordinate (A) at (rel axis cs:\xArel,\yArel);
        \coordinate (B) at (rel axis cs:\xBrel,\yBrel);
        \coordinate (C) at (rel axis cs:\xCrel,\yCrel);

        \draw[#5]   (A)-- node[pos=0.5,anchor=north] {1}
                    (B)-- 
                    (C)-- node[pos=0.5,anchor=west] {#4}
                    cycle;
    }
}
\begin{document}
\title{$\phi$-FEM: an optimally convergent and easily implementable immersed boundary method for particulate flows and Stokes equations\footnote{This work was supported by the Agence Nationale de la Recherche, Project PhiFEM, under grant ANR-22-CE46-0003-01.}}
\author{Michel Duprez\footnote{MIMESIS team, Inria Nancy - Grand Est, MLMS team, Universit\'e de Strasbourg, France. \texttt{michel.duprez@inria.fr}},Vanessa Lleras\footnote{IMAG, Univ Montpellier, CNRS, Montpellier, France.  \texttt{vanessa.lleras@umontpellier.fr}}, Alexei Lozinski\footnote{Universit\'e de Franche-Comt\'e, Laboratoire de Math\'ematiques de Besan\c{c}on, UMR CNRS 6623, 16, route de Gray
25000 Besan\c{c}on, France. \texttt{alexei.lozinski@univ-fcomte.fr}}}
\date{\today}
%

%

%


%
%
%
\maketitle

\begin{abstract} 
We present an immersed boundary method to simulate the creeping motion of a \AL{rigid particle} in a fluid described by the Stokes equations discretized thanks to a finite element strategy on unfitted meshes, called $\phi$-FEM, that uses the description of the solid with a level-set function.
One of the advantages of our method is the use of standard finite element spaces and classical integration tools, while maintaining the optimal convergence (theoretically in the $H^1$ norm for the velocity and $L^2$ for pressure; numerically also in the $L^2$ norm for the velocity).
\end{abstract}

\section{Introduction}

The main goal of the present article is to demonstrate that the recently proposed $\phi$-FEM methodology \cite{phifem,phifem2,cotin2021phi} is suitable for numerical simulation of incompressible viscous fluid flow past moving rigid bodies.  This approach allows us to use simple (Cartesian) computational meshes, not evolving in time and not fitted to the moving rigid bodies,  while achieving the optimal accuracy with classical finite element (FE) spaces of any order and performing the usual numerical integration on the whole mesh cells and facets, allowing for the use of standard FEM libraries for the implementation. We consider here only the creeping motion regime (zero Reynolds number), neglecting all the inertial terms in the equation governing both the fluid and the rigid bodies.

Numerical simulations of flows around moving rigid or elastic structures using immobile simple grids is a popular approach in, for instance, biomechanics, starting from the work of Peskin \cite{peskin77}. Different approaches have emerged since then, such as the Immersed Boundary method \cite{lai00immersed,mittal05immersed}, the Fictitious Domain method  \cite{glowinski99,glowinski01}, the penalty approximation \cite{angot99}, etc. All these classical methods suffer from poor accuracy due to the necessity to approximate the singularities near the fluid-solid interfaces which arise as the artifact of extending the fluid velocity field inside the solid domain. More recently, several optimally convergent fictitious domain-type methods have been proposed  for the Stokes equations, which can also be used to simulate the fluid-solid motions. We cite in particular \cite{burman,massing,guzman} following the CutFEM  paradigm, and \cite{amdouni12,alexei} following the X-FEM paradigm. The common feature of all these methods is that they discretize the variational formulation of the Stokes equation on the physical fluid domain $\Omega$ using the FE spaces defined on the background mesh occupying a domain $\Omega_h$, slightly larger than $\Omega$. On the one hand, this permits to avoid a non-smooth extension of the solution outside its natural domain and to retrieve the optimal accuracy of the employed finite elements. On the other hand, this introduces integrals on the cut cells into the FE scheme, i.e. the numerical integration should be performed on the portions of mesh cells, cut by the fluid-solid interface, making the methods difficult to implement.

The $\phi$-FEM approach, which is the subject of the present paper, aims at combining the advantages of both classical Immersed Boundary/Fictitious Domain methods, and more recent CutFEM/X-FEM. Similarly to the former,  $\phi$-FEM does not need non-standard numerical integration on the cut cells; similarly to the latter, $\phi$-FEM achieves the optimal accuracy of the finite elements employed.  The general procedure of $\phi$-FEM can be summarized as follows:
\begin{itemize}
    \item Supposing that the physical domain $\Omega$ is given by a level set function $\Omega=\{\phi<0\}$ and that it is embedded into a simple background mesh, introduce the \textit{active} computational mesh $\Th$ by getting rid of the mesh cells lying completely outside $\Omega$. The active mesh thus occupies a domain $\Omega_h\supset\Omega$, cf. Fig.~\ref{fig:omega_h},  as in CutFEM/X-FEM.
    \item Extend the governing equations from $\Omega$ to $\Omega_h$ and write down a formal variational formulation on $\Omega_h$ without taking into account the boundary conditions on $\Gamma$ (the relevant part of the boundary of $\Omega$).
    \item Impose the boundary conditions on $\Gamma$ using an appropriate ansatz or some additional variables, explicitly involving the level set $\phi$ which provides the link to the actual boundary. For instance, the homogeneous Dirichlet boundary conditions ($u=0$ on $\Gamma$) can be imposed by the ansatz ${u}=\phi{w}$ thus reformulating the problem in terms of the new unknown ${w}$.
    \item Add appropriate stabilization, typically combining the ghost penalty \cite{ghost} with a least square imposition of the governing equation on the mesh cells intersected by $\Gamma$, to guarantee coerciveness/stability on the discrete level.
\end{itemize}
This program has been successfully carried out for elliptic scalar PDEs with Dirichlet boundary conditions in \cite{phifem} and for Neumann boundary conditions in \cite{phifem2}. Its feasibility is also demonstrated in \cite{cotin2021phi} for the linear elasticity with mixed boundary conditions including the cases of internal interfaces between different materials or cracks, and for the heat equation. However, the adaptation to the equations governing the fluid flow around the moving particles is not straightforward. In particular, the following challenges are dealt with in the present article:
\begin{itemize}
    \item The discrete inf-sup stability theory should be adapted to the case of a non-standard variational formulation of the Stokes equations posed on $\Omega_h$ rather than on $\Omega$, and lacking the saddle-point structure. We shall show that this is possible by adapting the ghost penalty, which should be taken slightly more complicated than in the case of scalar elliptic equations \cite{phifem}. We shall do it here for Taylor-Hood finite elements of any order, but similar ideas should be also applicable to other classical inf-sup stable FE spaces.
    \item The motion equations for the solid particles involve the forces exerted on them by the surrounding fluid. These are defined through the integrals of some functions of fluid velocity and pressure on the particle boundary. However, the whole point of $\phi$-FEM is to avoid such integrals. Indeed, the particle boundary is not resolved by the mesh, and our goal is to provide a method that necessitates the integration on the whole mesh cells or facets only. The way out of this paradox, pursued in the present paper, lies in providing a weak formulation of the governing equations, extended to $\Omega_h$, that incorporates in an appropriate way the force balance equations, without stating them directly.  This formulation is similar in spirit but different from that in \cite{glowinski01}.
\end{itemize}
We note that the method of this article shares some similarities to the shifted boundary method (SBM) proposed in  \cite{sbm} and analysed in \cite{sbm2} in the case of Stokes equations. In particular, SBM also gives an optimal accurate solution (at least with the lowest order finite elements) without introducing integrals on the cut cells. It is however not evident \vanessa{how one can deal} with the computation of the forces on the particles in the SBM framework.

\begin{figure}
    \centering
    \includegraphics[width=0.3\linewidth]{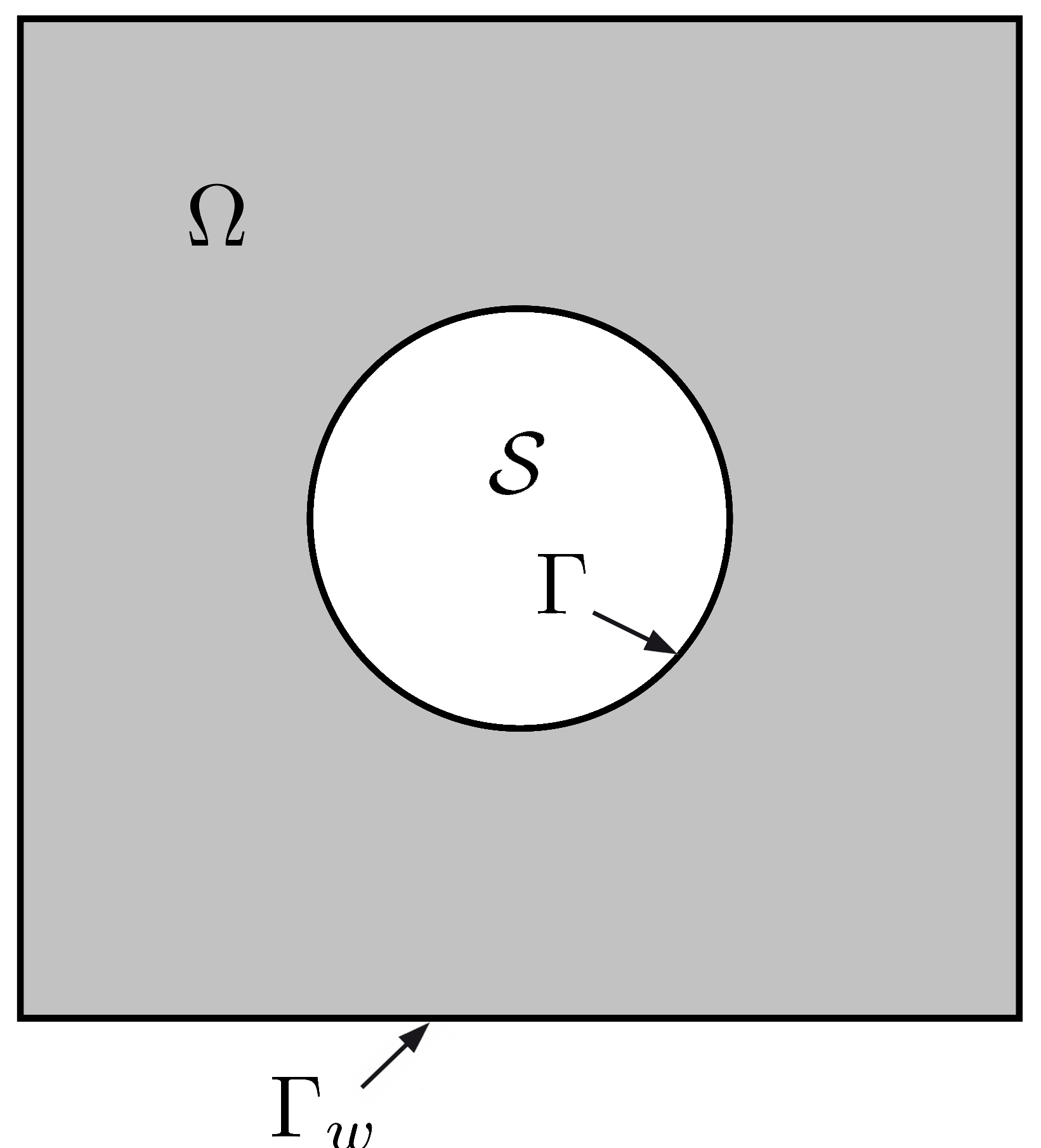}
    \includegraphics[width=0.3\linewidth]{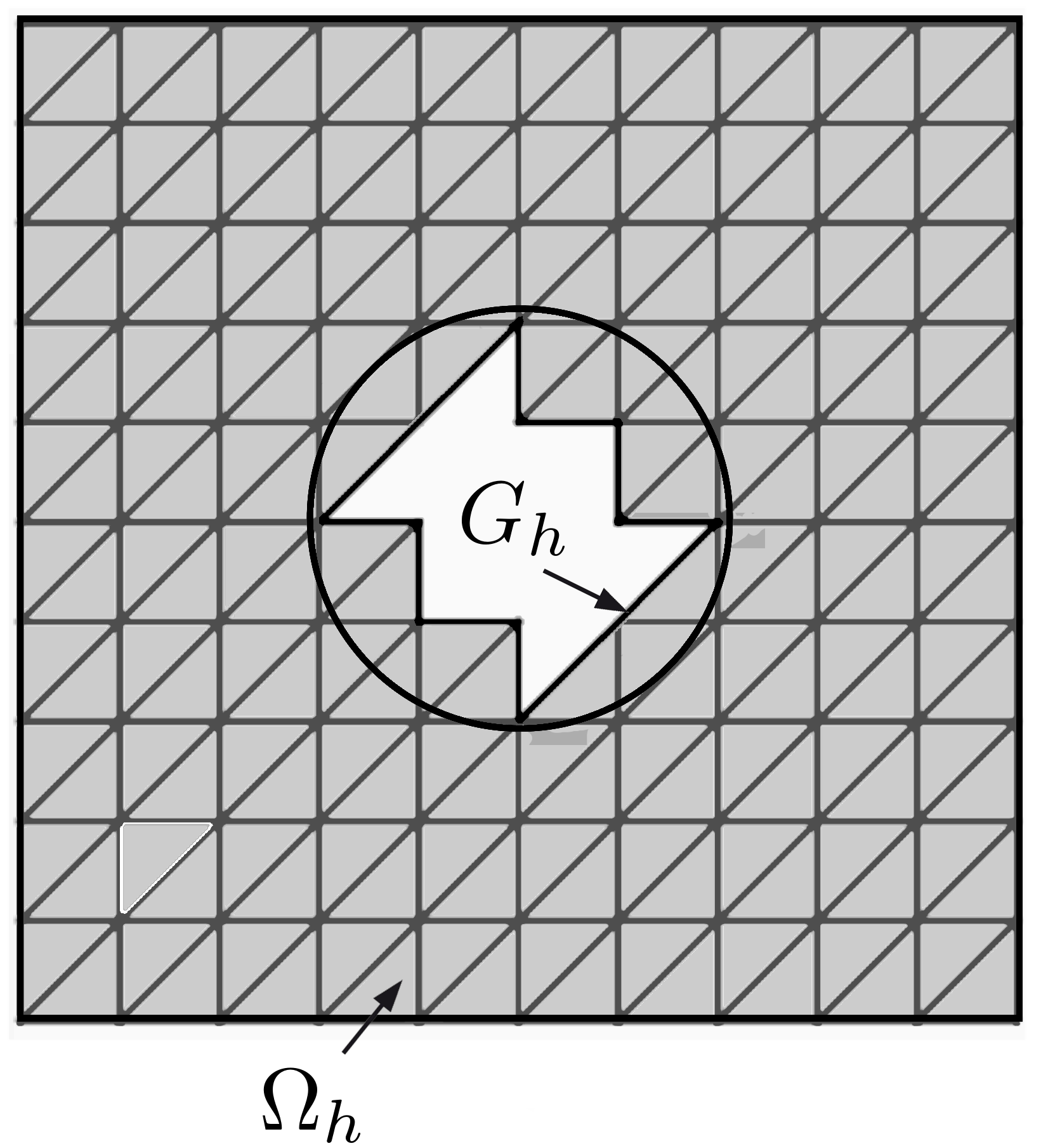}
    \caption{Left:~an example of geometry for the fluid $\Omega$ with a solid $\mathcal{S}$ inside; Right:~the  non-conforming active mesh $\Th$ on $\Omega_h$ with its internal boundary $G_h$.}
    \label{fig:omega_h}
\end{figure}

The paper is structured as follows. In the next section, we properly introduce the governing equations, develop an appropriate weak formulation, discretize it (thus introducing our $\phi$-FEM scheme), and announce the main theorem about the convergence of the scheme. Section 3 is devoted to the proof of this theorem. As a by-product, we also introduce a $\phi$-FEM approach to discretize the Stokes equations alone (on a fixed geometry) on a non-fitted mesh. The details about this (comparatively simple) particular case are given in Appendix A. In section 4, we illustrate our theoretical results with numerical examples both for the Stokes equations and for the fluid/rigid particle motion problem.  $\phi$-FEM is also compared there with a standard \AL{(non isoparametric)} FEM on fitted meshes, demonstrating the superiority of $\phi$-FEM in terms of the accuracy achieved on comparable meshes. \AL{We end up with the last section giving some conclusions and perspectives.} 

\AL{Various notations for different parts of geometry and triangulation appear throughout the article. For readers' convenience, they are gathered in Appendix \ref{AppGlossary}.}

\section{Construction of the $\phi$-FEM scheme and main results}

\subsection{Governing equations}

We consider the motion of a viscous incompressible fluid around a solid (rigid) particle in the regime of creeping motion, i.e. neglecting all the inertial terms (for simplicity, we restrict ourselves here to the case of one particle, the extension to multiple particles being straight-forward). The particle is mobile and it moves under the action of the forces exerted by the surrounding fluid and the external forces (gravity). Let the fluid occupy (at a given time $t$) the domain $\Omega \subset \mathbb{R}^d$ ($d=2$ or $3$), the particle occupy the domain $\mathcal{S} \subset \mathbb{R}^d$, and denote $\mathcal{O}= \Omega \cup \bar{\mathcal{S}}$. Let  $\Gamma_w=\partial\mathcal{O}$ be the external boundary of the fluid domain (the immobile wall) where the fluid velocity is assumed to vanish,  $\Gamma=\partial\mathcal{S}$ be the fluid/solid interface, and assume that $\Gamma$ does not touch $\Gamma_w$, so that $\partial \Omega$ contains two disjoint components $\Gamma_w$ and $\Gamma$. For simplicity, we assume that the only external body force is gravitation with the constant acceleration $g$. Hence, the body force density in the fluid is $\rho_f g$ where $\rho_f$ is the constant
fluid density. Let $\rho_s$ be the constant density of the solid. Then, the
resultant external force on the particle is $mg$ where $m=\rho_s|\mathcal{S}|$ is the mass of the particle, and the resultant moment of the external force with respect to the barycenter of the particle is 0. Denoting the constant fluid viscosity by $\nu$, the equations governing the motion of the fluid/particle system can be now given as:
\begin{subequations}
\begin{align}
- 2\nu \Div D(u) + \nabla p &= \rho_f g, &&\tmop{ in } \Omega \text{}  \label{eq:1a}\\
 \Div u &= 0, &&\tmop{ in } \Omega\label{eq:div} \\
 \label{bc1}
 u &= U + \psi \times r, &&\tmop{ on } \Gamma \quad \\
 \label{bc2} u &= 0, &&\tmop{ on } \Gamma_w \\
\label{int u 1}
\int_{\Gamma} (2\nu D (u) - pI) n &= mg & &\\
\label{int u 2}
\int_{\Gamma} (2\nu D (u) - pI) n \times r &= 0 && \\
\label{intp0}
\int_\Omega p&=0 &&
\end{align}
\end{subequations}
Here, the unknowns are the fluid velocity $u:\Omega\to\mathbb{R}^d$ and the pressure $p:\Omega\to\mathbb{R}$, the velocity of the particle  barycenter $U\in \mathbb{R}^d$, and the angular velocity of the particle $\psi\in \mathbb{R}^{d'}$ ($d'=1$ if $d=2$ and $d'=3$ if $d=3$). In these equations, $D (u) = \frac{1}{2} (\nabla u + \nabla u^T)$ denotes the strain tensor, $r$ denotes the vector from the barycenter of the solid $\mathcal{S}$, and $n$ denotes the unit normal on $\Gamma$ looking into the solid. Equations (\ref{int u 1})--(\ref{int u 2})
come from the balance of forces exerted on the particle (the force exerted by
the fluid and the gravitational force). 
\phantom{\ref{eq:div}}

From a numerical simulation perspective, it is natural to introduce an immobile computational mesh on  the immobile box $\mathcal{O}$ containing both the fluid and the particle. On the other hand, the solid $\mathcal{S}$ will be moving with velocities $U=U(t)$, $\psi=\psi(t)$ at all-time $t$, thus permanently changing the shape of the fluid domain $\Omega$. It is therefore interesting to design numerical methods for the system (\ref{eq:1a})--(\ref{intp0}) that discretize $u$ and $p$ on a mesh non fitted to $\Omega$.

\subsection{A formal derivation of the appropriate weak formulation }

Let $\Th^\mathcal{O}$ be a regular simplicial mesh on $\mathcal{O}$ (the background mesh).  \AL{Assume that the solid and fluid domains are given by the level-set function $\phi$: $\mathcal{S}=\{\phi>0\}$ and $\Omega=\mathcal{O}\cap\{\phi<0\}$.} Introduce the active computational mesh $\mathcal{T}_h$ as a submesh  of $\Th^\mathcal{O}$ covering $\Omega$\AL{, i.e. excluding the cells of $\Th^\mathcal{O}$ lying completely inside $\mathcal{S}$}. Let $\Omega_h \supset \Omega$ be the domain of $\mathcal{T}_h$ and $G_h$ be the component  of $\partial\Omega_h$, other than $\Gamma_w$, \AL{and thus lying inside $\mathcal{S}$}, cf.~Fig.~\ref{fig:omega_h}.\footnote{\AL{In practice, the geometrical setting may be slightly more complicated. The rigorous theoretical definitions of $\Omega_h$ and $G_h$ will be given in (\ref{eq:def Th}) and \eqref{def G_h} and will be based on an approximation $\phi_h$ to the levelset $\phi$, rather than on $\phi$ itself. This may occasionally result in situations where some tiny portions of $\Omega$ lie outside $\Omega_h$ so that $G_h$ slightly penetrates $\Omega$. These technical details are not important for the forthcoming formal derivation of the FE scheme, while the rigorous proofs will be done assuming definitions (\ref{eq:def Th}) and \eqref{def G_h}. The actual implementation may introduce yet more geometrical approximations, as mentioned in Remark \ref{PractGeom}, which are not covered by our theory.}}

Assume (on a formal level, just to derive the scheme) that $u$ and $p$ can be extended from $\Omega$ to $\Omega_h$ as solution to the Stokes equations so that
\[ - 2 \nu \Div D (u) + \nabla p = \rho_f g \quad\text{ and }\quad\Div u = 0 \tmop{ in }
   \Omega_h . \]
Taking any sufficiently smooth test functions $v$ and $q$ on $\Omega_h$ such
that $v = 0$ on $\Gamma_w$, an integration by parts gives
\begin{equation}
  \label{weakOmhuv} 2 \nu \int_{\Omega_h} D (u) : D (v) - \int_{\Omega_h} p
  \Div v - \int_{\Omega_h} q  \Div u - \int_{G_h} (2 \nu D (u) -
  pI) n \cdot v = \int_{\Omega_h} \rho_f g \cdot v.
\end{equation}
Assuming $u=0$ on $\Gamma_w$, this imposes already the boundary condition (\ref{bc2}) on $\Gamma_w$, which we suppose to fit to the mesh $\Th$. On the contrary,  this formulation does not take into account any boundary conditions on
$\Gamma$.  In order to incorporate boundary conditions \eqref{bc1} we make the ansatz
\begin{equation}\label{ansatzu}
 u = \phi w + \chi (U + \psi \times r)
\end{equation}
where $\phi$ is the level-set for $\Omega$ so that $\phi=0$ on $\Gamma$, and  $\chi$ is a sufficiently smooth function on $\mathcal{O}$ such that $\chi= 1$ on the solid $\mathcal{S}$ and $\chi = 0$ on $\Gamma_w$.
This introduces the new vector valued unknown $w$ on $\Omega_h$ that should vanish on $\Gamma_w$ (indeed $w=0$ on $\Gamma_w$ implies $u=0$ on $\Gamma_w$ thanks to the choice of $\chi$; in fact, the reason for $\chi$ is to decouple the boundary conditions on $\Gamma$ and $\Gamma_w$ from one another).

The test functions $v$ in (\ref{weakOmhuv}) can be represented in the same way as the solution (\ref{ansatzu}):
\begin{equation}\label{ansatzv}
 v = \phi s + \chi (V + \omega \times r)
\end{equation}
for all vector-valued functions $s$ on $\Omega_h$ vanishing on $\Gamma_w$, and $V\in \mathbb{R}^d$, $\omega \in \mathbb{R}^{d'}$. In particular, the test functions of the form $\chi (V + \omega \times r)$ can be used  to take into account the force balance (\ref{int u 1}--\ref{int u 2}). To this end, we introduce  $B_h = \Omega_h \setminus \Omega$, i.e. the strip between $\Gamma$ and $G_h$, and use the divergence theorem on $B_h$ to
 transfer the boundary term in (\ref{weakOmhuv}) from $G_h$ to $\Gamma$ where it can be evaluated by (\ref{int u 1}--\ref{int u 2}):
\begin{multline}\label{transferGh1}
  \int_{G_h} (2\nu D (u ) - p  I) n \cdot \chi (V + \omega \times
  r) = \int_{G_h} (2\nu D (u ) - p  I) n \cdot (V + \omega \times
  r)\\
  = \int_{\Gamma} (2\nu D (u) - p  I) n \cdot (V + \omega \times r) +
  \int_{B_h} \tmop{div} (2\nu D (u ) - pI) n \cdot (V + \omega \times
  r)\\
  = m g \cdot V  - \int_{B_h} \rho_f g  \cdot \chi (V  + \omega \times
  r) =\cdots
\end{multline}
\MD{(the unit normal $n$ on $G_h$ in the first line is exterior with respect to domain $\Omega_h$, whereas $n$ on $\Gamma$ in the second line is the exterior unit normal with respect to domain $\Omega$, so that the exterior normals with respect to $B_h$ are $n$ on $G_h$ and $-n$ on $\Gamma$).}
We now remark $B_h=\Omega_h\setminus(\mathcal{O}\setminus\mathcal{S})$ to rewrite the above as
\begin{multline}\label{transferGh2}
  \cdots
  =   -\int_{\Omega_h} \rho_f g \cdot \chi (V + \omega \times r) +
     \int_{\mathcal{O}} \rho_f g \cdot \chi (V + \omega \times r) -
   \int_{\mathcal{S}} \rho_f g \cdot (V + \omega \times r) + mg \cdot V \\
  =   -\int_{\Omega_h} \rho_f g \cdot \chi (V + \omega \times r) +
  \int_{\mathcal{O}} \rho_f g \cdot \chi (V + \omega \times r) + \left( 1 -
   \frac{\rho_f}{\rho_s} \right) mg \cdot V.
\end{multline}
The last line is justified by observing $\int_{\mathcal{S}} \rho_s g \cdot (V + \omega \times r) = mg \cdot V$ with $\rho_s$ being the constant density of the solid. \AL{Indeed, $\int_{\mathcal{S}} \rho_s = m$ and $\int_{\mathcal{S}} \rho_s r = 0$ since $r=x-x_b$ is the vector pointing from the barycenter of the solid $x_b=\frac{1}{m}\int_{\mathcal{S}} \rho_s x$ to the current position $x$.}

Substituting the ansatzes (\ref{ansatzu})-(\ref{ansatzv}) for $u$ and $v$ into (\ref{weakOmhuv})  and rewriting the boundary term using (\ref{transferGh1})-(\ref{transferGh2}) we arrive at the following formal variational formulation of our problem in terms of the new unknowns $w,\vanessa{U},\psi$: find $w:\Omega_h\to\mathbb{R}^d$  vanishing on $\Gamma_w$, $U\in \mathbb{R}^d$, $\psi \in \mathbb{R}^{d'}$, and $p:\Omega_h\to\mathbb{R}$ such that
\begin{multline}\label{weakOmh}
  2\nu\int_{\Omega_h} D (\phi w+\chi (U + \psi \times r)) : D (\phi s + \chi (V + \omega \times r)) - \int_{G_h} (2\nu D (\phi w+\chi (U + \psi \times r)) - pI) n \cdot \phi s \\
  - \int_{\Omega_h} p \Div (\phi s+\chi (V + \omega \times r))
  - \int_{\Omega_h} q \Div (\phi w+\chi (U + \psi \times r))
  \\
 = \int_{\Omega_h} \rho_f g \cdot \phi s + \int_{\mathcal{O}} \rho_f g \cdot
   \chi (V + \omega \times r) + \left( 1 - \frac{\rho_f}{\rho_s} \right) mg
   \cdot V
\end{multline}
for all $s:\Omega_h\to\mathbb{R}^d$ vanishing on $\Gamma_w$, $V\in \mathbb{R}^d$, $\omega \in \mathbb{R}^{d'}$, and $q:\Omega_h\to\mathbb{R}$. In addition, the pressure $p$ should satisfy the constraint (\ref{intp0}).

Note that the formulation above  contains only the integrals on $\Omega_h$, $G_h$, $\mathcal{O}$ which can be easily approximated by quadrature rules on meshes $\Th$ and $\Th^\mathcal{O}$.
We can thus discretize using the usual finite elements for the trial and test functions.

\subsection{The $\phi$-FEM scheme: discretization with Taylor-Hood finite elements}

We fix an integer $k \geqslant 2$ and introduce the approximations $\phi_h$ and $\chi_h$ to the levelset $\phi$ and to the cut-off $\chi$, given by the standard nodal interpolation to the continuous FE spaces of degree $k$ on the mesh $\Th^\mathcal{O}$. The active computational mesh $\Th$, its domain $\Omega_h$ and the internal boundary component $G_h$ are actually defined as follows, cf. Fig. \ref{fig:omega_h},
\begin{equation}\label{eq:def Th}
\Th=\{T\in\Th^\mathcal{O}:T\cap\{\phi_h<0\}\neq\varnothing\},
\quad
\Omega_h:=\left(\cup_{T\in \mathcal{T}_h}T\right)^o \,,
\end{equation}
\begin{equation}\label{def G_h}
\MD{G_h=\partial\Omega_h\setminus\Gamma_w
=\{E \text{ (boundary facets of } \mathcal{T}_h)  \text{ such that } \AL{\phi_h\geq0\text{ on } E} \} \,.}
\end{equation}
Moreover, we shall need the collections of the mesh cells $\mathcal{T}_h^{\Gamma}$ and facets $\mathcal{F}_h^{\Gamma}$ near the boundary $\Gamma$, as illustrated in Fig. \ref{fig:penalty set}, to include the appropriate stabilization into the FE scheme. More specifically, we introduce the submesh $\Th^{\Gamma}\subset\Th$ and the corresponding subdomain $\Omega_h^{\Gamma}\subset\Omega$ containing the mesh elements intersected by the approximate interface $$\Gamma_h=\{\phi_h=0\},$$
i.e.
\begin{equation}\label{eq:def ThGamma}
\Th^{\Gamma}=\{T\in\Th:T\cap\Gamma_h\neq\varnothing\},
\quad
\Omega_h^{\Gamma}:=\left(\cup_{T\in \mathcal{T}_h^{\Gamma}}T\right)^o\,.
\end{equation}
Finally, we set $\mathcal{F}_h^{\Gamma}$ as  the collection of the interior facets of the mesh $\Th$ either cut by $\Gamma_h$ or  belonging to a cut mesh element
\begin{equation*}
  \mathcal{F}_h^{\Gamma} = \{E \text{ (an internal facet of } \mathcal{T}_h)
  \text{ such that } \exists T \in \mathcal{T}_h^\Gamma \text{ and } E \in \partial T\}.
\end{equation*}

\begin{figure}
    \centering
    \includegraphics[width=0.3\linewidth]{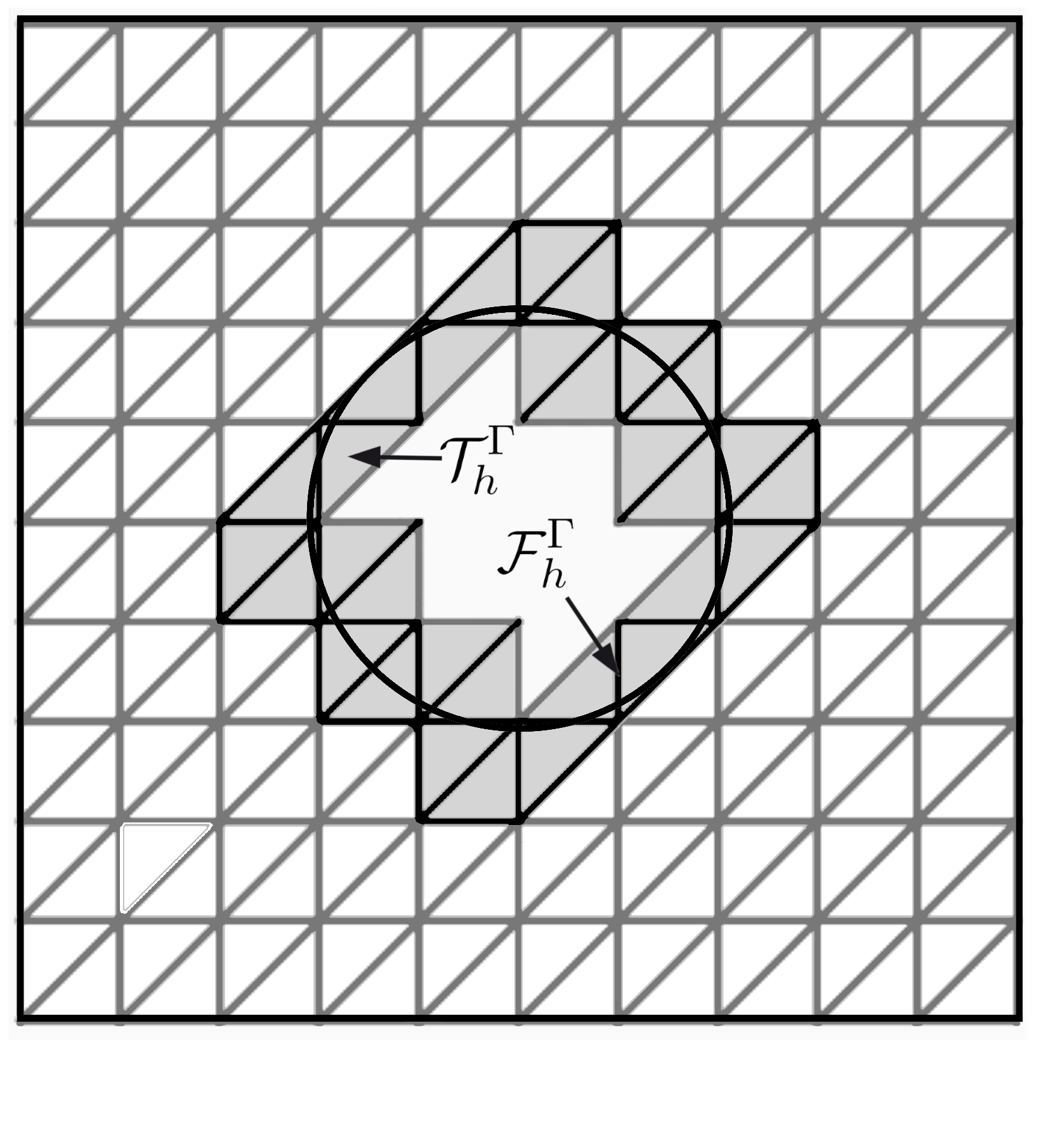}
    \caption{Example of $\mathcal{T}_h^{\Gamma}$ and $\mathcal{F}_h^{\Gamma}$ with the geometry given in Fig.~\ref{fig:omega_h} (left).}
    \label{fig:penalty set}
\end{figure}

\begin{rmk}\label{PractGeom} \MD{The definitions of $\Th$ and $\Th^\Gamma$ above assume an idealized setting where one can check the sign of $\phi_h$ at any point of any given mesh cell $T$. In practice, one would rather check this sign only at the vertices of the mesh or, eventually, at some other well chosen points. In our current implementation, we attribute the cells to $\Th$ or $\Th^\Gamma$ based on the sign of $\phi$ (equivalently, the sign of $\phi_h$) at the vertices only: in 2D, a triangle $T\in\Th^\mathcal{O}$ is selected to be in $\Th$ if $\phi\leqslant 0$ on at least one vertex of $T$; a triangle $T\in\Th$ is then selected to be in $\Th^\Gamma$ if $\phi\geqslant 0$ on at least one vertex of $T$. This deviation from definitions (\ref{eq:def Th})-(\ref{def G_h}) is not covered by our theory. 
}

\AL{We also note that in more advanced applications of $\phi$-FEM, $\phi_h$ and $\chi_h$ may be given directly on the discrete level, for instance by a discrete level-set equation. This possibility is however outside of the scope of the present article. We suppose here that the shape of the particle is sufficiently simple so that $\phi$ and $\chi$ are known analytically.}
\end{rmk}

Introduce the FE spaces for velocity and pressure on the mesh $\Th$:
$$ \mathcal V_h = \left\{ \nobracket v_h \in C (\bar{\Omega})^d : v_h |_T \in
   \mathbb{P}^k (T)^d \hspace{1em} \forall T \in \mathcal{T}_h, \hspace{1em}
   v_h = 0 \tmop{ on } \Gamma_w \right\}$$
   and
 $$ \mathcal M_h = \left\{ \nobracket q_h \in C (\bar{\Omega}) : q_h |_T \in
   \mathbb{P}^{k - 1} (T) \hspace{1em} \forall T \in \mathcal{T}_h,
   \hspace{1em} \int_{\Omega} q_h = 0 \right\} .
$$
\begin{rmk}\label{rmq1}
Note that \MD{the definition} of the pressure space involves an integral on $\Omega$, which is incompatible with our $\phi$-FEM framework since its whole point is to avoid integrals on $\Omega$ and $\Gamma$. In practice, we shall rather impose $\int_{\Omega_h} q_h = 0$, introducing a mismatch in the additive pressure constant (which, anyway, has no physical meaning) with respect to the exact solution satisfying (\ref{intp0}). We prefer however to keep the unimplementable constraint in the definition above to avoid some technical difficulties in theory. In practice, a special care will have to be taken in the interpretation of the error in pressure. We shall return to this technical point in the numerical results section.
\end{rmk}

The stabilized scheme inspired by (\ref{weakOmh}) can be now written as: find $w_h \in \mathcal V_h$, $U_h \in
\mathbb{R}^d, \psi_h \in \mathbb{R}^{d'}$, $p_h \in \mathcal M_h$ such that
\begin{multline}
  \label{sch} 2\nu\int_{\Omega_h} D (\phi_h w_h+\chi_h (U_h+\psi_h \times r) ) : D  (\phi_hs_h+\chi_h (V_h+\omega_h\times r) )\\
  - \int_{G_h} (2\nu D (\chi_h (U_h + \psi_h \times r) +\phi_h w_h)
  - p_hI) n \cdot \phi_h s_h
    - \int_{\Omega_h} p_h \Div (\phi_hs_h+\chi_h (V_h+\omega_h\times r) )  \\- \int_{\Omega_h} q_h \Div (\phi_h w_h+\chi_h (U_h+\psi_h \times r) ) \\
 + \sigma_u J_u(\chi_h (U_h + \psi_h \times r) +\phi_h w_h ,\chi_h (V_h + \omega_h \times r) +\phi_h s_h ) 
 \\
 + \sigma h^2  \sum_{T \in
   \mathcal{T}_h^{\Gamma}} \int_T (- \nu\Delta (\phi_h w_{h}+\chi_h (U_h+\psi_h\times r) ) + \nabla p_{h}) \cdot (-
   \nu\Delta (\phi_h s_h+\chi_h (V_h+\omega_h\times r) ) - \nabla q_h) \\
 + \sigma \sum_{T \in \mathcal{T}_h^{\Gamma}} \int_T \Div (\phi_h w_{h}+\chi_h (U_h+\psi_h\times r)) \Div (\phi_h s_h+\chi_h (V_h+\omega_h\times r) )  \\
 = \int_{\Omega_h} \rho_f g \cdot \phi_h s_h + \int_{\mathcal{O}} \rho_f g \cdot
   \chi_h (V_h + \omega_h \times r) + \left( 1 -  \frac{\rho_f}{\rho_s} \right) mg \cdot V_h \\
 + \sigma h^2  \sum_{T \in \mathcal{T}_h^{\Gamma}} \int_T \rho_f g \cdot (-
   \nu\Delta (\phi_h s_h+\chi_h (V_h+\omega_h\times r) ) - \nabla q_h)
\end{multline}
for all $s_h \in \mathcal V_h$, $V_h \in \mathbb{R}^d, \omega_h \in \mathbb{R}^{d'}$, $q_h \in \mathcal M_h$.

Here $J_u$ is the ghost penalties for the velocity, cf. \cite{ghost}:
\begin{align*} J_u(u,v) &=
 h \sum_{E \in \mathcal{F}_h^{\Gamma}} \int_E \left[
   {\partial_n} u \right] \cdot \left[
   {\partial_n} v \right]
   +h^3 \sum_{E \in \mathcal{F}_h^{\Gamma}} \int_E \left[
   {\partial_n^2} u \right] \cdot \left[
   {\partial_n^2} v \right]\,.
  \end{align*}
Note that, unlike \cite{ghost,burman}, we do not penalize the jumps of all the derivatives of the velocity; only the derivatives of order up to $2$ are included in $J_u$. There is no penalization on the pressure either. \MD{This alleviation of the ghost penalty is possible thanks to the additional least-squares-type stabilization (the terms multiplied by $\sigma$), cf. Lemmas \ref{lemma:poly} and \ref{LemDir:prop1 bis}. These least-squares terms are also necessary in themselves to control the fictitious extension of the solution outside $\Omega$, cf. the proof of Lemma \ref{lemma:coer}. Note that this extension is not present in CutFEM (this is indeed the principal difference between CutFEM and $\phi$-FEM).}
We also mention that the version of the ghost penalty in $\phi$-FEM for Poisson problem in \cite{phifem} is even more reduced: only the jumps of the first order derivatives are penalized there. The inclusion of the second order derivatives in $J_u$ in the present case of Stokes equations allows us to control both velocity and pressure in the forthcoming proofs, cf. Lemma \ref{lemma:poly}.

\subsection{Assumptions on the mesh and \MD{main results}}

Prior to stating our main results on the numerical convergence of our method, we begin with some geometrical assumptions on $\Omega$ and the functions $\phi$ and $\chi$.
\begin{assumption}\label{asm0}
The boundary $\Gamma$ can be covered by open sets $\mathcal{O}_i$, $i=1,\ldots,I$ and one can introduce on every $\mathcal{O}_i$ local coordinates $\xi_1,\ldots,\xi_d$ with $\xi_d=\phi$ such that all the partial derivatives $\partial^\alpha\xi/\partial x^\alpha$ and $\partial^\alpha x/\partial\xi^\alpha$ up to order $k+1$ are bounded by some $C_0>0$. Thus, $\phi$ is of class $C^{k+1}$ on $\mathcal{O}$.
\end{assumption}
\begin{assumption}\label{asm:chi}%
	$\chi\in H^{k+1}(\mathcal{O})$, $\chi=1$ on $\mathcal{S}$, $\chi=0$ on $\Gamma_w$.
\end{assumption}

We continue with assumptions on the mesh. To this end, we introduce an extended band of mesh elements near the boundary $\Gamma$, namely the submesh $\mathcal{T}_h^{\Gamma, ext}$ with  $\mathcal{T}_h^{\Gamma}\subset\mathcal{T}_h^{\Gamma, ext}\subset\Th$ by adding to $\mathcal{T}_h^{\Gamma}$ the cells which are neighbors  and neighbors of neighbors of cells in $\mathcal{T}_h^{\Gamma}$. 
	\begin{assumption}\label{asm1}%
		$|\nabla\phi|\ge m$, $|\nabla\phi_h|\ge \frac m 2$ on all the mesh cells in $\Th^{\Gamma,ext}$,  $|\phi|\ge mh$ on $\Th\setminus\Th^{\Gamma,ext}$, and $|\nabla\phi_h|\le M$ on $\Omega_h$ with some $m,M>0$.
	\end{assumption}

	\begin{assumption}\label{asm2}
		The approximate interface $G_h$ can be covered by element patch\-es $\{\Pi_k \}_{k = 1,
			\ldots, N_{\Pi}}$ having the following properties:
		\begin{itemize}
			\item Each $\Pi_k$ is composed of a mesh element $T_k$ lying inside $\Omega$ and some elements cut by $\Gamma$, more precisely $\Pi_k = T_k \cup \Pi_k^{\Gamma}$ where $T_k\in\mathcal{T}_h$, $T_k\subset\bar\Omega$, $\Pi_k^{\Gamma}\subset\mathcal{T}_h^{\Gamma}$, and $\Pi_k^{\Gamma}$ contains at most $N$ mesh elements;
			\item Each mesh element in a patch $\Pi_k$ shares at least a facet with another mesh element in the same patch. In particular, $T_k$ shares a facet $F_k$ with an element in $\Pi_k^\Gamma$;
			\item $\mathcal{T}_h^{\Gamma} = \cup_{k = 1}^{N_{\Pi}} \Pi_k^{\Gamma}$;
			\item $\Pi_k$ and $\Pi_l$ are disjoint if $k \neq l$.
		\end{itemize}
	\end{assumption}

\begin{assumption}\label{asmStokes}
    Any mesh cell $T\in\Th$ has at least $d$ facets not lying on $\Gamma_w$.
\end{assumption}
\MD{\begin{rmk}\label{RmkAsmPhi}
		Assumptions \ref{asm0}, \ref{asm1}, \ref{asm2} are similar to those made in the previous $\phi$-FEM publications \cite{phifem,phifem2}, which contain a more detailed discussion about them and some illustrations. In what concerns the mesh, these assumptions are satisfied if the mesh is sufficiently refined and $\Gamma$ is sufficiently smooth.  In what concerning the level-set function $\phi$,  we require essentially that it behaves like the signed distance to  $\Gamma$ near $\Gamma$ and it is bounded away from zero far from $\Gamma$, while remaining globally smooth. In general, one cannot thus take the signed distance to  $\Gamma$ as $\phi$ everywhere since it is guaranteed to be smooth only in a vicinity of $\Gamma$.
\end{rmk}}

\begin{rmk}
The last assumption \ref{asmStokes} is usually required in the theoretical analysis of Taylor-Hood elements for the Stokes equation in the geometrically conforming setting \cite{ern2013theory}, although it can be significantly relaxed, at least in the 2D setting \cite{Boffi09}. 
Note that this assumption only affects the mesh near the outer wall $\Gamma_w$, more particularly in the corners of $\mathcal{O}$, which we treat in the standard geometrically conforming manner anyway. It does not impose any further restriction on the active mesh $\Th$ near the interface $\Gamma$, where $\phi$-FEM is effectively employed.
\end{rmk}

Let us now state our main results:

\begin{theorem}\label{th1}
Suppose that Assumptions \ref{asm0}--\ref{asmStokes} hold true and the mesh $\mathcal{T}_h$ is quasi-uniform.
Let $(u,U,\psi,p)\in H^{k+1}(\Omega)^d\times\mathbb{R}^d\times\mathbb{R}^{d'}\times H^{k}(\Omega)$  be the solution to \eqref{eq:1a}-\eqref{intp0} and $(w_h,U_h,\psi_h,p_h)\in \mathcal V_h\times\mathbb{R}^d\times\mathbb{R}^{d'}\times \mathcal M_h$ be the solution to \eqref{sch}.  Denoting
\begin{equation*}
u_h:=\chi_h(U_h+\psi_h\times r)+\phi_h w_h
\end{equation*}
it holds for $h\le h_0$
\begin{equation}\label{H1err}
  | u - u_h|_{1, \Omega\cap\Omega_h}+ \AL{\frac{1}{\nu}}\|p - p_h\|_{0, \Omega\cap\Omega_h} \le Ch^k (\|u\|_{k+1,\Omega}+ \AL{\frac{1}{\nu}}\|p\|_{k,\Omega})
\end{equation}
and
\begin{equation}\label{rigerr}
|U-U_h| + |\psi-\psi_h| \le Ch^k (\|u\|_{k+1,\Omega}+ \AL{\frac{1}{\nu}}\|p\|_{k,\Omega})
\end{equation}
with some $C>0$ and $h_0>0$ depending on the parameters $C_0$, $m$, $M$, $N$  in Assumptions \ref{asm0}--\ref{asmStokes}, on the maximum of the derivatives of $\phi$ and $\chi$ of order up to $k+1$, on the mesh regularity, and on the polynomial degree $k$, but independent of $h$, $f$, and $u$.

Moreover, supposing $\Omega\subset\Omega_h$
\begin{equation}\label{L2err}
   \| u - u_h\|_{0, \Omega}
   \le Ch^{k+1/2}
 (\|u\|_{k+1,\Omega}+ \AL{\frac{1}{\nu}}\|p\|_{k,\Omega})
 \end{equation}
with a constant $C>0$ of the same type as above.
\end{theorem}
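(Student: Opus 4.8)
The plan is to follow the three classical ingredients of unfitted finite element analysis---discrete stability, consistency, and approximation---and then to assemble them through a Strang-type estimate, treating the three conclusions (\ref{H1err}), (\ref{rigerr}) and (\ref{L2err}) in turn. First I would rewrite the scheme (\ref{sch}) abstractly as $a_h(\mbf{x}_h,\mbf{y}_h)=\ell_h(\mbf{y}_h)$ for the tuple $\mbf{x}_h=(w_h,U_h,\psi_h,p_h)$ and test tuples $\mbf{y}_h=(s_h,V_h,\omega_h,q_h)$, and introduce the mesh-dependent norm
\begin{multline*}
|||\mbf{x}_h|||^2 = |u_h|_{1,\Omega_h}^2 + J_u(u_h,u_h) + \tfrac1{\nu^2}\|p_h\|_{0,\Omega_h}^2 + |U_h|^2+|\psi_h|^2 \\
+ \sigma h^2\sum_{T\in\Th^\Gamma}\|{-}\nu\Delta u_h+\nabla p_h\|_{0,T}^2 + \sigma\sum_{T\in\Th^\Gamma}\|\Div u_h\|_{0,T}^2,
\end{multline*}
with $u_h=\chi_h(U_h+\psi_h\times r)+\phi_h w_h$. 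The stability of $a_h$ in this norm is precisely the content of Lemma \ref{lemma:coer}, whose proof relies on Lemmas \ref{lemma:poly} and \ref{LemDir:prop1 bis} to transfer control from the stabilized band $\Th^\Gamma$ to the whole active mesh via the ghost penalty, and---crucially---to dominate the fictitious extension of $u_h$ and $p_h$ outside $\Omega$ by the least-squares terms. I would take this estimate as given.

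Next I would build an interpolant respecting the ansatz. Setting $w=(u-\chi(U+\psi\times r))/\phi$ near $\Gamma$ and extending it suitably, Assumption \ref{asm0} (the behaviour of $\phi$ as a local coordinate near $\Gamma$) yields $\|w\|_{k+1,\Omega}\lesssim\|u\|_{k+1,\Omega}$ in spite of the vanishing of $\phi$ on $\Gamma$; let $w_h^I$ be its degree-$k$ nodal interpolant and put $\mbf{x}_h^I=(w_h^I,U,\psi,p_h^I)$ with $p_h^I$ the degree-$(k-1)$ interpolant of $p$. Standard interpolation then controls $|||\mbf{x}_h^I-\mbf{x}|||$, where $\mbf{x}$ is the extended exact solution, after transferring bounds from $w-w_h^I$ to $u-u_h^I$ using $|\phi_h|\le Mh$ on $\Th^\Gamma$ and Assumption \ref{asm1} elsewhere. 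The consistency error $a_h(\mbf{x},\mbf{y}_h)-\ell_h(\mbf{y}_h)$ arises only from the stabilization band and the boundary term on $G_h$: since the extended exact solution satisfies $-\nu\Delta u+\nabla p=\rho_f g$ and $\Div u=0$ on $\Omega_h$, the least-squares residuals would vanish for the exact $\phi,\chi$, so only the $O(h^k)$ mismatch between $(\phi,\chi)$ and $(\phi_h,\chi_h)$ remains, which I would estimate cellwise. Combining stability, consistency and approximation through the triangle inequality yields (\ref{H1err}) on $\Omega\cap\Omega_h$.

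The rigid-body estimate (\ref{rigerr}) then follows by a finite-dimensional argument: $(U-U_h)+(\psi-\psi_h)\times r$ is a rigid field, whose Euclidean size $|U-U_h|+|\psi-\psi_h|$ is equivalent to its $L^2$ norm over a fixed portion of the solid where $\chi_h\equiv1$. There this field equals $(u-u_h)-(\phi w-\phi_h w_h)$, and both contributions are bounded by $Ch^k(\|u\|_{k+1,\Omega}+\nu^{-1}\|p\|_{k,\Omega})$ using the already established control of $u-u_h$ and of $\phi_h w_h$ through $|||\cdot|||$.

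Finally, (\ref{L2err}) is an Aubin--Nitsche duality argument. I would introduce the adjoint Stokes problem on $\Omega$ with data $u-u_h$, obtaining $(\varphi,\pi)\in H^2(\Omega)^d\times H^1(\Omega)$ with $\|\varphi\|_{2,\Omega}+\|\pi\|_{1,\Omega}\lesssim\|u-u_h\|_{0,\Omega}$, extend $(\varphi,\pi)$ to $\Omega_h$, and test the error equation against its $\phi$-FEM interpolant. The interior contributions produce the optimal order $h^{k+1}$, but the geometric mismatch on the strip $B_h=\Omega_h\setminus\Omega$ and the boundary term on $G_h$---paired with only $H^2\times H^1$ regularity of the dual pair and a trace estimate scaling like $h^{1/2}$---produce a term of order $h^{k+1/2}$, which dominates and gives (\ref{L2err}) under $\Omega\subset\Omega_h$. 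I expect the \emph{main obstacle} to be precisely the bookkeeping of this half-power loss, together with verifying that the ansatz-based interpolant of the dual pair enjoys the required approximation and consistency properties across the unresolved interface $\Gamma$.
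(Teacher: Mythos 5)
Your architecture (inf-sup stability from Lemma \ref{lemma:coer}, an ansatz-respecting interpolant built via division by $\phi$, a Strang-type assembly, and duality for the $L^2$ bound) matches the paper's, but two of your steps would fail as written. First, the consistency step: you assume the extended exact solution satisfies $-\nu\Delta u+\nabla p=\rho_f g$ and $\Div u=0$ on all of $\Omega_h$, so that the least-squares residuals vanish up to the $(\phi,\chi)$ versus $(\phi_h,\chi_h)$ mismatch. No such extension exists in general with the required $H^{k+1}\times H^k$ control: it would amount to solving an ill-posed Cauchy problem for Stokes in the strip $B_h$, and the paper makes this hypothesis only ``on a formal level'' to derive the scheme. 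The rigorous proof takes a bounded Sobolev extension $(\tilde u,\tilde p)$, introduces the fictitious residuals $\tilde F=-2\Div D(\tilde u)+\nabla\tilde p$ and $\Div\tilde u$ (which do \emph{not} vanish on $B_h$ and $\Omega_h^\Gamma$), and arrives at the Galerkin-orthogonality remainder $R_h$ in \eqref{GalOrt}; the key mechanism is that $\tilde F-\rho_f g$ and $\Div\tilde u$ vanish on $\Omega$, hence are $O(h^{k-1})$ and $O(h^k)$ in $L^2(\Omega_h^\Gamma)$ by the one-sided smallness lemma of \cite{phifem}, with the missing powers of $h$ recovered from the $h^2$ weight of the least-squares terms and from $\|\phi_h s_h\|_{0,\Omega_h^\Gamma}\le Ch\,|v_h|_{1,\Omega_h}$ (Lemma \ref{LemDir:prop2}). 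This dominant contribution, together with the transfer of the $G_h$ boundary term to $\Gamma$ by the divergence theorem on $B_h$ (which is where the force balance produces $mg\cdot V_h$), is absent from your account.

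Second, your proof of \eqref{rigerr} relies on a ``fixed portion of the solid where $\chi_h\equiv1$,'' but no such $h$-independent region lies inside the computational domain: the active mesh penetrates $\mathcal{S}$ only to depth $O(h)$, so the set $\{\chi_h=1\}\cap\Omega_h$ has measure $O(h)$, and the equivalence between $|U-U_h|+|\psi-\psi_h|$ and the $L^2$ norm of the rigid field over that thin band degenerates like $h^{1/2}$, losing half an order. The paper instead evaluates on the codimension-one surface $\Gamma_h=\{\phi_h=0\}$, where $\phi_h s_h$ vanishes identically so the trace of the discrete velocity is purely rigid; this yields \eqref{BoundVhomh} in Lemma \ref{LemDir:prop2} with no loss, and \eqref{rigerr} then follows because the interpolant $\tilde u_h$ is built with the exact $U,\psi$. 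Two smaller points: your duality argument must use the particulate adjoint problem with $v=V+\omega\times r$ on $\Gamma$ and zero net force and torque, not a plain Dirichlet adjoint, since otherwise the boundary terms involving the unknown discrete rigid velocities in \eqref{eq:esti H1 norm} do not cancel; and your claim $\|w\|_{k+1,\Omega}\le C\|u\|_{k+1,\Omega}$ overstates the Hardy inequality (Lemma \ref{lemma:hardy} loses one derivative, $\|u/\phi\|_{s,\Omega_h}\le C\|u\|_{s+1,\Omega_h}$), though this is harmless since Lemma \ref{LemInterp} only needs $w\in H^{k}$.
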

\begin{rmk}
The numerical results in Section \ref{NumSec} suggest that the convergence order for the particle velocity error and that for the $L^2$-error of the fluid velocity is $k+1$. This suggests that both estimates (\ref{rigerr}) and (\ref{L2err}) are not sharp. This is similar to our previous studies \cite{phifem} and \cite{phifem2}, \AL{in what concerns the $L^2$-error}.
\end{rmk}

\section{Proofs}

From now on, we put the viscosity of the fluid to $\nu=1$ to simplify the formulas. \MD{The general case can be easily recovered by dividing the governing equations by $\nu$ and redefining $\frac{p}{\nu}$ as $p$.} We shall also tacitly suppose that Assumptions \ref{asm0}--\ref{asmStokes} holds true.

This section is organized as follows:  we start  with some technical lemmas in Sections \ref{sec:lemma} and \ref{sec:inter}, essentially adapting the corresponding results from \cite{phifem}. Note however that the interpolation error bound in Section \ref{sec:inter} is sharper than its counterpart in \cite{phifem}; it is now optimal with respect to the Sobolev norm expected from the interpolated function. The proofs of Taylor-Hood inf-sup stability, the generalized coercivity of the bilinear form and finally the \textit{a priori} error estimates are then given, respectively, in Sections \ref{sec:taylor}, \ref{sec:coer} and \ref{sec:a priori}, thus establishing Theorem \ref{th1}.

\subsection{Some technical lemmas.}\label{sec:lemma}

\AL{Lemmas \ref{lemma:poly}, \ref{LemDir:prop1 bis}, and \ref{LemDir:prop2} are adaptions of, respectively, Lemmas 3.2, 3.3, and 3.4-3.5 from \cite{phifem}. Lemma \ref{LemKorn} is a version of the well known Korn inequality stating the uniformity of the constant in this inequality for a family of $h$-dependent domains $\Omega_h$. All these results, most notably Lemma \ref{LemDir:prop1 bis}, are necessary to prove the inf-sup stability of our scheme.}

 \begin{lemma}
   \label{lemma:poly}Let $T$ be a triangle/tetrahedron, $E$ one of its sides, $v$
   a vector-valued polynomial function on $T$, $q$ a scalar-valued polynomial
   function on $T$ such that
   \begin{equation}\label{eq:v q 0}
     v=  \frac{\partial v}{\partial n}= \frac{\partial^2 v}{\partial n^2}= 0 \tmop{ on } E
   \end{equation}
   and
     \begin{equation}\label{eq:delta v  0} - \Delta v + \nabla q = 0\mbox{ and }\Div v=0 \tmop{ on } T.
     \end{equation}
   Then $v = 0$ and $q=const$ on $T$.
 \end{lemma}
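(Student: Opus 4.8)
The plan is to exploit the overdetermined Cauchy data of $v$ on $E$ together with the Stokes structure via a Holmgren-type unique continuation argument, which becomes completely elementary here because $v$ and $q$ are polynomials. First I would fix coordinates $x=(x',x_d)$ so that $E$ lies in the hyperplane $\{x_d=0\}$ and the normal is $n=e_d$, whence $\partial_n=\partial_{x_d}$. Since $E$ is a $(d-1)$-dimensional facet with nonempty relative interior and $v$ is polynomial, the conditions \eqref{eq:v q 0} propagate from $E$ to the entire hyperplane, giving $v=\partial_{x_d}v=\partial_{x_d}^2 v=0$ on all of $\{x_d=0\}$. Note that one cannot simply test the momentum equation against $v$ and integrate by parts over $T$, since the Cauchy data are controlled only on the single facet $E$ and not on the remaining facets of $T$; this is the main obstacle, and it is precisely what forces a continuation argument rather than a plain energy estimate.

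Next I would pin down $q$ first. Taking the divergence of $-\Delta v+\nabla q=0$ and using $\Div v=0$ yields $\Delta q=0$, so $q$ is harmonic. To obtain Cauchy data for $q$ on $\{x_d=0\}$, I read the momentum equation componentwise as $\nabla q=\Delta v$. On the hyperplane, every tangential second derivative $\partial_{x_j}^2 v_i$ with $j<d$ vanishes because $v_i\equiv 0$ there, while $\partial_{x_d}^2 v_i=0$ by the propagated condition; hence $\Delta v=0$ on $\{x_d=0\}$, and therefore $\nabla q=0$ on $\{x_d=0\}$. This is exactly where the hypothesis $\partial_n^2 v=0$ is indispensable: with only $v=\partial_n v=0$ one could not eliminate $\partial_{x_d}^2 v_i$. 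Consequently $q$ equals some constant $c$ on $\{x_d=0\}$ (zero tangential gradient) and $\partial_{x_d}q=0$ there as well.

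It then remains to run the same continuation twice. The polynomial $q-c$ is harmonic with vanishing Cauchy data $q-c=\partial_{x_d}(q-c)=0$ on $\{x_d=0\}$; writing $q-c=\sum_{j\ge 0}x_d^j a_j(x')$, harmonicity gives the recursion $(j+2)(j+1)a_{j+2}=-\Delta' a_j$, where $\Delta'$ is the tangential Laplacian, and the Cauchy data force $a_0=a_1=0$, so by induction every $a_j=0$ and thus $q\equiv c$. With $\nabla q=0$ the momentum equation reduces to $\Delta v=0$, i.e. each component $v_i$ is harmonic; since $v=\partial_{x_d}v=0$ on $\{x_d=0\}$, the identical recursion forces $v_i\equiv 0$. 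This gives $v=0$ and $q=c=\mathrm{const}$ on $T$, as claimed. The only genuinely non-routine ingredient is the elementary unique-continuation statement for harmonic polynomials with zero Cauchy data on a hyperplane, which the Taylor-coefficient recursion above establishes directly.
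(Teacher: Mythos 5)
Your proposal is correct, and it takes a genuinely different route from the paper's. The paper attacks the coupled system head-on: in 2D it expands $v=(v_1,v_2)$ and $q$ in monomials $x^iy^j$, eliminates the pressure coefficients between the two momentum components, and runs a strong induction on the degree in the normal variable, invoking the divergence equation at each step to recover the second velocity component (the 3D case is asserted to be a straightforward generalization). You instead decouple the system: taking the divergence of the momentum equation and using $\Div v=0$ gives $\Delta q=0$; the condition $\partial_n^2 v=0$ --- whose indispensable role you correctly isolate --- combines with the tangential vanishing of $v$ to give $\Delta v=0$, hence full Cauchy data $\nabla q=0$, on the hyperplane containing $E$; your Taylor recursion in $x_d$ (elementary unique continuation for harmonic polynomials) then kills $q-c$, after which the momentum equation makes each component of $v$ a harmonic polynomial with zero Cauchy data, annihilated by the same recursion. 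Your preliminary propagation step (a polynomial vanishing on a set with nonempty relative interior in a hyperplane vanishes on the whole hyperplane) is also sound and is tacitly used in the paper as well. What your approach buys: it is uniform in the dimension $d$, so the tetrahedral case needs no extra bookkeeping, and it cleanly explains why exactly two normal derivatives must be controlled --- one pair ($v$, $\partial_n v$) to continue the velocity, and the extra $\partial_n^2 v$ precisely to manufacture Cauchy data for the pressure. What the paper's approach buys: a single self-contained induction on coefficients that treats $v$ and $q$ simultaneously without passing through harmonicity of $q$, at the cost of dimension-specific index manipulation.
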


 \begin{proof}
 We shall give the proof only in the two dimensional setting, the generalization to the case $d=3$ being straightforward.  Without loss of generality, we can choose the Cartesian coordinates $(x,y)$ such that the edge $E$ lies on the $x$-axis. We shall denote the components of the vector-valued function $v$ by $(v_1,v_2)$.

 Let us write, for $k\in\{1,2\}$,
 $$v_k=\sum\limits_{\scriptstyle i,j\geq0} v_{ij}^kx^iy^j
 \mbox{ and }q=\sum\limits_{\scriptstyle i,j\geq0} q_{ij}x^iy^j.$$
 We will prove by strong induction on $m$ that
 \begin{equation}\tag{$S_m$}\label{eq:Sm}
v^k_{ij}=0 ~~~\forall k\in\{1,2\},~i\ge 0,~j\in \{0,...,m\}.
 \end{equation}
 Using \eqref{eq:v q 0}, it holds
 $$\sum\limits_{i\geq 0} v_{i0}^kx^i=\sum\limits_{i\geq 0} v_{i1}^kx^i=\sum\limits_{i\geq 0} v_{i2}^kx^i=0$$
 for all $x$ and $k\in\{1,2\}$.
 Hence $v_{i0}^k=v_{i1}^k=v_{i2}^k=0$ for all $i\geq 0$, $k\in\{1,2\}$
 and we obtain ($S_2$).
 Assume that for a given $m\geq 2$, (\ref{eq:Sm}) holds true.
 Thanks to \eqref{eq:delta v  0}, one has
 \begin{equation}\label{eq:lapldisc1}
 -(i+2)(i+1)v_{(i+2)j}^1-(j+2)(j+1)v_{i(j+2)}^1+(i+1)q_{(i+1)j}=0,
 \end{equation}
 \begin{equation}\label{eq:lapldisc2}
 -(i+2)(i+1)v_{(i+2)j}^2-(j+2)(j+1)v_{i(j+2)}^2+(j+1)q_{i(j+1)}=0
 \end{equation}
 and
  \begin{equation}\label{eq:div disc}
(i+1)v_{(i+1)j}^1+(j+1)v_{i(j+1)}^2=0.
 \end{equation}
 From \eqref{eq:lapldisc1} and \eqref{eq:lapldisc2}, for each $i,j$
 \begin{equation*}
 (i+2)v_{(i+2)(j+1)}^1+\frac{(j+3)(j+2)}{i+1}v_{i(j+3)}^1
 =\frac{(i+3)(i+2)}{j+1}v_{(i+3)j}^2+(j+2)v_{(i+1)(j+2)}^2.
 \end{equation*}
 The last equality for $j=m-2$ combined with ($S_m$) implies that
 $$v^1_{i(m+1)}=0 ~~~\forall i\ge 0.$$
 Relation \eqref{eq:div disc} for $j=m$ and ($S_{m}$) gives
  $$v^2_{i(m+1)}=0 ~~~\forall i\ge 0,$$
 which leads to ($S_{m+1}$).
 Thus $v=0$. This also implies $\nabla q=0$ on $T$ thanks to \eqref{eq:delta v  0}.
 \end{proof}

 \begin{lemma}
  \label{LemDir:prop1 bis}
  For any  $\beta>0$ and any integers $s,r \ge 1$ there exists $0 < \alpha < 1$   depending only on the mesh regularity and $s,r$ such that for any continuous vector-valued $\mathbb{P}^s$ FE function  $v_h$  on $\Th$ and any continuous scalar $\mathbb{P}^r$ FE function   $q_h$ it holds
  \begin{multline}\label{vhestOmGam}
    \|D(v_h)\|_{0, \Omega_h^{\Gamma}}^2 + (1-\alpha) h^2|q_h|_{1, \Omega_h^{\Gamma}}^2
    \le \alpha \|D(v_h)\|_{0, \Omega_h}^2 \\
    + \beta \left(h^2\|-\Delta v_h +\nabla q_h \|_{0, \Omega_h^{\Gamma}}^2+\|\Div v_h  \|_{0, \Omega_h^{\Gamma}}^2
    + \sum_{E \in \mathcal{F}_h^{\Gamma}}
    (h\|[\partial_nv_h]\|_{0,E}^2+h^3 \|[\partial_n^2v_h]\|_{0,E}^2)
   \right).
  \end{multline}
\end{lemma}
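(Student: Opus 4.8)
The plan is to localize the estimate onto the element patches $\Pi_k = T_k \cup \Pi_k^{\Gamma}$ provided by Assumption \ref{asm2}, to prove a scaled patch-local version by a compactness argument, and then to sum. Since $\Th^{\Gamma} = \cup_k \Pi_k^{\Gamma}$ with the $\Pi_k$ pairwise disjoint and each $T_k\subset\bar\Omega$ an interior (non-cut) cell, summation of the local estimates will reconstruct $\|D(v_h)\|_{0,\Omega_h^{\Gamma}}^2$ and $|q_h|_{1,\Omega_h^{\Gamma}}^2$ on the left, while the cells $T_k$ together with the cut cells all lie inside $\Omega_h$, producing $\|D(v_h)\|_{0,\Omega_h}^2$ on the right. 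The point that lets the constant in front of $\|D(v_h)\|_{0,\Omega_h}^2$ be \emph{strictly} below $1$ is that the pressure is controlled only through its gradient on the cut cells (the left-hand side contains $|q_h|_{1,\Omega_h^{\Gamma}}$ only), so no pressure information on the $T_k$ is ever required; this is precisely what makes the volumetric residual and divergence terms sum up to $\Omega_h^{\Gamma}$ without leaving spurious contributions on the $T_k$.

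For the patch-local step I would pass to a reference patch $\hat\Pi$ of unit size through the affine maps $F_k$, setting $\hat v(\hat x)=v_h(F_k\hat x)$ and $\hat q(\hat x)=h\,q_h(F_k\hat x)$ (the factor $h$ balancing the scaling of the Stokes residual $-\Delta v+\nabla q$), and prove that for every $\beta'>0$ there is $\alpha'<1$, depending only on $\beta'$, $s$, $r$ and the family of admissible reference configurations, such that
\begin{equation*}
\|D(\hat v)\|_{0,\hat\Pi^{\Gamma}}^2 + (1-\alpha')|\hat q|_{1,\hat\Pi^{\Gamma}}^2 \le \alpha'\|D(\hat v)\|_{0,\hat\Pi}^2 + \beta'\hat{\mathcal R},
\end{equation*}
where $\hat{\mathcal R}=\|-\Delta\hat v+\nabla\hat q\|_{0,\hat\Pi^{\Gamma}}^2+\|\Div\hat v\|_{0,\hat\Pi^{\Gamma}}^2+\sum_{\hat E}(\|[\partial_n\hat v]\|_{0,\hat E}^2+\|[\partial_n^2\hat v]\|_{0,\hat E}^2)$. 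I would argue by contradiction: assuming failure for some $\beta'$, pick $\alpha'_n\to 1^-$ and violating pairs $(\hat v^n,\hat q^n)$, normalized in the quotient by rigid motions of $\hat v$ and by per-cut-cell constants of $\hat q$ so that $\|D(\hat v^n)\|_{0,\hat\Pi}^2+|\hat q^n|_{1,\hat\Pi^{\Gamma}}^2=1$. Finite dimensionality yields a convergent subsequence, and passing to the limit (using $1-\alpha'_n\to 0$) gives $\|D(\hat v)\|_{0,\hat\Pi^{\Gamma}}^2 \ge \|D(\hat v)\|_{0,\hat\Pi}^2+\beta'\hat{\mathcal R}$; since $\hat\Pi=\hat T\cup\hat\Pi^{\Gamma}$, this forces $\|D(\hat v)\|_{0,\hat T}=0$ and $\hat{\mathcal R}=0$.

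Here the rigidity Lemma \ref{lemma:poly} does the essential work. From $D(\hat v)=0$ on $\hat T$, the function $\hat v$ is an infinitesimal rigid motion $R$ there, so its trace and first two normal derivatives on the shared facet $F_k$ coincide with those of $R$. Because $\hat{\mathcal R}=0$ forces $[\partial_n\hat v]=[\partial_n^2\hat v]=0$ across $F_k$ and $-\Delta\hat v+\nabla\hat q=0$, $\Div\hat v=0$ on the adjacent cut cell, the difference $\hat v-R$ satisfies exactly the hypotheses \eqref{eq:v q 0}--\eqref{eq:delta v 0} of Lemma \ref{lemma:poly} on that cell, hence vanishes, while $\hat q$ becomes constant there. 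Propagating along the facet-connected cells of the patch (Assumption \ref{asm2}) shows $\hat v\equiv R$ on all of $\hat\Pi$ and $\hat q$ constant on each cut cell, whence $\|D(\hat v)\|_{0,\hat\Pi}=0$ and $|\hat q|_{1,\hat\Pi^{\Gamma}}=0$, contradicting the normalization. Rescaling the reference estimate back to $\Pi_k$ is then a direct computation: the chosen scalings convert $\hat{\mathcal R}$ precisely into $h^2\|-\Delta v_h+\nabla q_h\|_{0,\Pi_k^{\Gamma}}^2+\|\Div v_h\|_{0,\Pi_k^{\Gamma}}^2+\sum_E(h\|[\partial_n v_h]\|_{0,E}^2+h^3\|[\partial_n^2 v_h]\|_{0,E}^2)$ and $|\hat q|_{1,\hat\Pi^{\Gamma}}^2$ into $h^2|q_h|_{1,\Pi_k^{\Gamma}}^2$, and summing over $k$ with $\alpha=\alpha'$, $\beta=\beta'$ gives the claim.

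I expect the main obstacle to be the clean decoupling that keeps $\alpha<1$: one must run the contradiction argument with the $\alpha'$-dependence built in (letting $\alpha'_n\to 1$, rather than fixing a constant patch estimate and rearranging afterwards), and one must ensure the left-hand side never involves the pressure on the interior cells, since otherwise the summation would leave an uncontrolled $|q_h|_{1}$ contribution on the $T_k$. A secondary technical point is the uniformity of $\alpha'$ over the finitely many reference patch configurations; this follows from shape-regularity and the bound $N$ on the number of cut cells per patch, by an additional compactness over configurations in the contradiction argument.
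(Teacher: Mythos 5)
Your proposal is correct and takes essentially the same route as the paper's proof: localization to the patches of Assumption \ref{asm2}, a finite-dimensional compactness/contradiction argument on a quotient normalized by rigid motions and pressure constants (the paper phrases this as a scale-invariant maximum $\alpha$ attained and shown $<1$ by contradiction, which is equivalent to your sequence $\alpha'_n \to 1^-$ on a rescaled reference patch), followed by the identical rigidity step via Lemma \ref{lemma:poly} applied to $\hat v - R$ with cell-by-cell propagation through the facet-connected patch, and summation over the disjoint patches. The only cosmetic differences are your explicit reference-patch scaling versus the paper's direct invariance of the ratio under $x \mapsto x/h$, $v_h \mapsto v_h/h$, $q_h \mapsto q_h$, and your quotient by per-cut-cell constants, which is harmless since continuity of $q_h$ on the connected set $\Pi_k^{\Gamma}$ collapses these to a single constant.
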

\begin{proof}
 Thanks to Assumption \ref{asm2}, the boundary $\Gamma$ can be covered by  patches $\{\Pi_k\}_{k=1,...,N_\Pi}$. Take $\beta>0$ and set
 \begin{equation}\label{alphmax}
     \alpha:=\max\limits_{\Pi_k,v_h,q_h}F(\Pi_k,v_h,q_h),
 \end{equation}
 where
 $$F(\Pi_k,v_h,q_h)=
 \dfrac{ \| D(v_h) \|_{0, \Pi_k^{\Gamma}}^2 {+h^2| q_h |_{1, \Pi_k^\Gamma}^2} - \beta G(\Pi_k,v_h,q_h)
 }{\| D(v_h) \|_{0, \Pi_k}^2{+h^2| q_h |_{1, \Pi_k^\Gamma}^2}}$$
 with
  $$G(\Pi_k,v_h,q_h)= h^2\|-\Delta v_h+\nabla q_h \|_{0,\Pi_k^{\Gamma}}^2+\|\Div v_h  \|_{0, \Pi_k^{\Gamma}}^2+h\left\|\left[\partial_n v_h\right]\right\|_{\mathcal{F}_k}^2+h^3\left\|\left[\partial_n^2 v_h\right]\right\|_{\mathcal{F}_k}^2.$$
  The maximum in (\ref{alphmax}) is taken over all the continuous vector-valued $\mathbb{P}^s$ FE functions  $v_h$  on $\Pi_k$, all the continuous scalar $\mathbb{P}^r$ FE functions  $q_h$   on $\Pi_k^\Gamma$, such that the denominator in the expression for $F$ does not vanish, and over all the possible configurations of patches $\Pi_k$ satisfying Assumption \ref{asm2}. The notation $\mathcal{F}_k$ stands for the set of mesh facets inside the patch $\Pi_k$ which includes thus $F_k$ separating $T_k$ from $\Pi_k^\Gamma$ and the other facets inside $\Pi_k^\Gamma$. The norm $\|\cdot\|_{\mathcal{F}_k}$ should be understood as $(\sum_{F\in\mathcal{F}_k}\|\cdot\|_F^2)^{1/2}$.

  Since the maximized function $F$ is invariant with respect to the transformation  $x\mapsto \frac{1}{h}x$, $v_h\mapsto \frac{1}{h}v_h$, $q_h\mapsto q_h$, we can assume that $h=1$ in (\ref{alphmax}). Furthermore $F(\Pi_k,v_h,q_h)=F(\Pi_k,\lambda v_h,\lambda q_h)$ for any $\lambda\neq0$. Hence the maximum (\ref{alphmax}) is attained since it can be taken over all admissible patches with $h=1$ and all $v_h,q_h$ such that $\|D(v_h)\|_{0, \Pi_k}^2{+h^2| q_h |_{1,\Pi_k^\Gamma}^2}=1$, forming the unit sphere in the finite dimensional space of all $(v_h,q_h)$ factored by rigid body motions on $\Pi_k$ and constants on $\Pi_k^\Gamma$. 

  Clearly $\alpha\leqslant 1$. Let us prove by contradiction that $\alpha<1$.
  Assume that $\alpha=1$.
  Consider the patch $\Pi_k$ (with $h=1$) and $v_h$, $q_h$ with $\|D(v_h)\|_{0, \Pi_k}^2{+| q_h |_{1,\Pi_k^\Gamma}^2}=1$ on which the maximum (\ref{alphmax}) is attained.  Then,
 \begin{equation*}
 \|D(v_h)\|_{0, T_k}^2 +\beta G(\Pi_k,v_h,q_h)=0,
 \end{equation*}
 since $\Pi_k=T_k\cup\Pi_k^\Gamma$.
  We deduce that  $v_h=  \frac{\partial v_h}{\partial n}= \frac{\partial^2 v_h}{\partial n^2}= 0$ on all the facets in $\mathcal{F}_k$   and    $ - \Delta v_h + \nabla q_h = 0$, $\Div(v_h)=0$ on all $T\in\Pi_k^\Gamma$. Moreover, $v_h$ is a rigid body motion on $T_k$. Let $v_h^{rbm}$ be the rigid body motion velocity on $\mathbb{R}^d$ coinciding with $v_h$ on $T_k$. Thanks to Lemma \ref{lemma:poly} applied to $v_h-v_h^{rbm}$ and to $q_h$ on the cells in $\Pi_k^\Gamma$ starting from the cell adjacent in $\Pi_k^{\Gamma}$ to $T_k$, we have $v_h=v_h^{rbm}$ on $\Pi_k$ and $q_h=const$ on $\Pi_k^\Gamma$ (recall that $q_h$ is continuous). We have thus reached a contradiction with the assumptions  $\|D(v_h)\|_{0, \Pi_k}^2{+| q_h |_{1,\Pi_k^\Gamma}^2}=1$ and $\alpha=1$.

This proves that there exists $\alpha<1$ such that
\begin{multline*}
\|D(v_h)\|_{0, \Pi_k^\Gamma}^2+(1-\alpha)h^2| q_h |_{1, \Pi_k^\Gamma}^2
\le \alpha\|D(v_h)\|_{0, \Pi_k}^2 \\
+\beta\left(
h^2\|-\Delta v_h+\nabla q_h \|_{0,\Pi_k^{\Gamma}}^2+\|\Div v_h  \|_{0, \Pi_k^{\Gamma}}^2+h\left\|\left[\partial_n v_h\right]\right\|_{\mathcal{F}_k}^2+h^3\left\|\left[\partial_n^2 v_h\right]\right\|_{\mathcal{F}_k}^2
\right)
\end{multline*}
on all the patches $\Pi_k$ and for all $v_h,q_h$. Summing this over all $\Pi_k$ gives (\ref{vhestOmGam}).
 \end{proof}

\begin{lemma}\label{LemKorn}
	For any $v \in H^1(\Omega)^d$ vanishing on $\Gamma_w$
	\begin{equation}\label{Korn}
	|v|_{1, \Omega_h} \le C \|D(v)\|_{0, \Omega_h} \,.
	\end{equation}
\end{lemma}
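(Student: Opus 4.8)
The only nontrivial point is the \emph{$h$-independence} of the constant $C$: for each fixed $h$ the set $\Omega_h$ is a bounded Lipschitz domain having the fixed wall $\Gamma_w$ (of positive surface measure) as part of its boundary, so the classical Korn inequality already applies with some constant $C(h)$. The plan is therefore to control the behavior as $h\to0$ by combining a \emph{uniform} second Korn inequality on the family $\{\Omega_h\}$ with a contradiction argument in which the homogeneous Dirichlet condition on the fixed wall $\Gamma_w$ removes the rigid body motions in the limit.

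First I would establish the uniform second Korn inequality
\[
\|v\|_{1,\Omega_h}\le C\big(\|D(v)\|_{0,\Omega_h}+\|v\|_{0,\Omega_h}\big)
\]
with $C$ independent of $h$. The key observation is that $\partial\Omega_h=\Gamma_w\cup G_h$, where $\Gamma_w$ is fixed and smooth while $G_h$ is the polyhedral surface made of the mesh facets on which $\phi_h\ge0$, cf.\ \eqref{def G_h}. Since the mesh is quasi-uniform and shape-regular, $G_h$ is a Lipschitz surface whose Lipschitz character (the local slopes of its defining graphs, the number of charts, and so on) is bounded \emph{independently of $h$}; hence $\{\Omega_h\}$ is a \emph{uniformly Lipschitz} family. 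For such families the constant in the negative-norm (Lions/Ne\v{c}as) inequality $\|q\|_{0,\Omega_h}\le C(\|q\|_{-1,\Omega_h}+\|\nabla q\|_{-1,\Omega_h})$ is uniform, and the standard derivation of the second Korn inequality from that inequality then yields the displayed estimate with an $h$-independent $C$. This uniformity is the heart of the matter, and I expect it to be the main obstacle: one must verify that the Lipschitz constants of the cut boundary $G_h$ do not degenerate under mesh refinement, which is exactly where the shape-regularity and quasi-uniformity of $\Th$ enter.

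It then remains to remove the $\|v\|_{0,\Omega_h}$ term using the condition on $\Gamma_w$, which I would do by contradiction. If the claimed inequality failed uniformly there would exist $h_n\to0$ and $v_n\in H^1(\Omega_{h_n})^d$ vanishing on $\Gamma_w$ with $|v_n|_{1,\Omega_{h_n}}=1$ and $\|D(v_n)\|_{0,\Omega_{h_n}}\to0$. Applying the \emph{fixed}-domain Korn inequality on $\Omega$ (legitimate since $\Gamma_w\subset\partial\Omega$ has positive measure) gives $\|v_n\|_{1,\Omega}\le C\|D(v_n)\|_{0,\Omega}\to0$, so in particular the traces $v_n|_\Gamma\to0$. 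On the thin strip $B_{h}=\Omega_h\setminus\Omega$, whose width is $O(h)$ by Assumption \ref{asm1} (so that $G_h$ stays within $O(h)$ of $\Gamma$), a one-dimensional Poincar\'e estimate along the normals to $\Gamma$ yields $\|v_n\|_{0,B_{h_n}}\le C\big(h_n^{1/2}\|v_n\|_{0,\Gamma}+h_n\,|v_n|_{1,B_{h_n}}\big)\to0$, using $|v_n|_{1,B_{h_n}}\le1$. Hence $\|v_n\|_{0,\Omega_{h_n}}\to0$, and the uniform second Korn inequality above forces $\|v_n\|_{1,\Omega_{h_n}}\to0$, contradicting $|v_n|_{1,\Omega_{h_n}}=1$. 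In the rare configurations where $G_h$ slightly penetrates $\Omega$, as noted after \eqref{def G_h}, one applies the fixed-domain Korn inequality on $\Omega\cap\Omega_h$ and absorbs the $O(h)$ discrepancy into the band estimate, which does not affect the argument.
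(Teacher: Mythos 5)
Your overall architecture (fixed-domain Korn on $\Omega$ via the wall condition, plus a thin-strip Poincar\'e estimate on $B_h$ to control $\|v\|_{0,\Omega_h}$, plus a contradiction to remove the lower-order term) is internally coherent, and the strip estimate and the use of the fixed constant on $\Omega$ are fine. But the keystone of your argument --- the \emph{uniform} second Korn inequality $\|v\|_{1,\Omega_h}\le C(\|D(v)\|_{0,\Omega_h}+\|v\|_{0,\Omega_h})$ with $C$ independent of $h$ --- is asserted rather than proved, and the justification you give for it does not suffice. Shape regularity and quasi-uniformity of $\Th$ alone do not imply that the cut boundary $G_h$ has $h$-uniform Lipschitz character: the facets of $G_h$ can be oriented arbitrarily relative to $\Gamma$, and ruling out degenerating chart configurations (near-pinches at vertices in 2D, at edges in 3D) requires exploiting the closeness of $G_h$ to the smooth interface $\Gamma$, i.e.\ Assumptions \ref{asm0} and \ref{asm1} on $\phi$ and $\phi_h$, together with a genuine chart construction --- none of which you carry out. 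You flag this yourself as ``the main obstacle'' and then ``expect'' it to hold; since the whole $h$-uniformity of the lemma is concentrated in exactly this step, the proposal as written has a genuine gap. (The further appeal to an $h$-uniform Lions/Ne\v{c}as constant on uniformly Lipschitz families is citable but also nontrivial, so even granting uniform Lipschitzness you are invoking heavier machinery than the statement needs. A smaller remark: your contradiction is superfluous --- with the uniform second Korn inequality and your strip bound $\|v\|_{0,B_h}\le C(h^{1/2}\|v\|_{0,\Gamma}+h|v|_{1,B_h})$ one can absorb the $O(h)|v|_{1,\Omega_h}$ term directly for $h\le h_0$.)

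The paper's proof avoids the global Lipschitz question entirely by a local propagation argument: it first gets $|v|_{1,\Omega_h^i}\le C\|D(v)\|_{0,\Omega_h}$ on the interior submesh from the fixed-domain Korn inequality, then proves for adjacent cells $T,T'$ sharing a facet $E$ the scaled estimate $|v|_{1,T}\le C\|D(v)\|_{0,T}+C|v|_{1,T'}$ (Korn plus trace on the reference element, with $|v|_{1/2,E}$ as the intermediary), and sweeps this layer by layer from $\Omega_h^i$ out to $\Omega_h$; since $B_h$ is only a uniformly bounded number of cell layers wide, the constant stays $h$-independent. If you want to salvage your route, the honest comparison is that your ``uniformly Lipschitz family'' claim would have to be proved as a separate lemma of comparable difficulty to the result itself, whereas the paper's cell-by-cell argument reduces everything to reference-element compactness and mesh regularity, which are already in hand.
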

\begin{proof}

Since $v = 0$ on $\Gamma_{\omega}$, we have the following Korn inequality
\begin{equation}\label{KornOm}
 |v|_{1, \Omega } \le C \|D (v)\|_{0, \Omega }
\end{equation}
with a constant $C>0$ depending only on the shape of $\Omega$, cf. \cite[Theorem 6.3-4]{ciarlet88}. This
implies
\begin{equation}\label{korn3}
|v|_{1, \Omega_h^i} \le C \|D (v)\|_{0, \Omega_h},
\end{equation}
where $\Omega_h^i$ \vanessa{denotes} the mesh cells inside $\Omega$. Now, for any pair of mesh cells $T,T'$ sharing a facet $E$, we can prove
\begin{equation}\label{KornTT}
 |v|_{1, T } \le C \|D (v)\|_{0, T } + C | v |_{1, T' }
\end{equation}
with a constant $C$ independent of $h$.  Indeed, combining the Korn inequalities (\ref{KornOm}) and the trace theorem on the reference element leads to  $|v|_{1, T } \le C \|D (v)\|_{0, T } + C | v |_{1/2, E }$. Employing again the trace inequality
$| v |_{1/2, E } \le  C | v |_{1, T' }$ (see \cite[Lemma 7.5.26 ]{brenner2008mathematical})
leads to  (\ref{KornTT}).

 Let $\Omega_h^{i, 1}$ be $\Omega_h^i$ plus the cells which are not in
$\Omega_h^i$ but have a neighbor in $\Omega_h^i$.
For any such cell $T$, we
take $T'$ as its neighbor in $\Omega_h^i$, apply the
estimate above \MD{and sum which gives
$$ |v|_{1,  \Omega_h^{i, 1}\backslash \Omega_h^{i}} \le C \|D (v)\|_{0, \Omega_h^{i, 1}\backslash \Omega_h^{i} } + C | v |_{1, \Omega_h^{i} },$$
hence, using \eqref{korn3},}
\[ |v|_{1, \Omega_h^{i, 1}} \le C \|D (v)\|_{0, \Omega_h}. \]
Let $\Omega_h^{i, 2}$ be $\Omega_h^{i, 1}$ plus the cells which are not in
$\Omega_h^{i, 1}$ but have a neighbor in $\Omega_h^{i, 1}$. We have similar to
above
\[ |v|_{1, \Omega_h^{i, 2}} \le C \|D (v)\|_{0, \Omega_h} \]
and so on. After a finite number of steps, say $k$, we arrive at $\Omega_h^{i,
k} = \Omega_h$. And
\[ | v |_{1, \Omega_h} =|v|_{1, \Omega_h^{i, k}} \le C \|D
   (v)\|_{0, \Omega_h}. \]
\end{proof}

\begin{lemma}\label{LemDir:prop2}
    For any $s_h \in \mathcal V_h$ and any $V_h\in\mathbb{R}^d,\omega_h\in\mathbb{R}^{d'}$,
	\begin{equation*}
	\|\phi_h s_h\|_{0, \Omega_h^{\Gamma}} +\left(\sum_{F \in \mathcal{F}_h^{\Gamma}} h\|\phi_hs_h \|_{0, F}^2\right)^{1/2}
	+ \sqrt{h}\|\phi_hs_h \|_{0, G_h}\le Ch \|D(\phi_h s_h+\chi_h(V_h+\omega_h\times r))\|_{0, \Omega_h}
	\end{equation*}
	\AL{and
    \begin{equation}\label{BoundVhomh}
    |V_h | + | \omega_h | \leqslant C \|\phi_h s_h+\chi_h(V_h+\omega_h\times r) \|_{1, \Omega_h} \,.
    \end{equation}}
\end{lemma}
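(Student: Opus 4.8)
The plan is to set $v_h := \phi_h s_h + \chi_h(V_h + \omega_h\times r)$, so that the two right-hand sides read $Ch\|D(v_h)\|_{0,\Omega_h}$ and $C\|v_h\|_{1,\Omega_h}$, and to note first that $v_h = 0$ on $\Gamma_w$, since $s_h\in\mathcal V_h$ vanishes there and $\chi_h = 0$ on $\Gamma_w$ by Assumption \ref{asm:chi} and nodal interpolation. I would prove the second inequality \eqref{BoundVhomh} first, because the first one will rely on it.

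For \eqref{BoundVhomh} I would exploit that $\phi_h s_h$ vanishes on the discrete interface $\Gamma_h=\{\phi_h=0\}$, so that $v_h = \chi_h(V_h+\omega_h\times r)$ on $\Gamma_h$. Three ingredients then combine: (i) a uniform trace inequality $\|v_h\|_{0,\Gamma_h}\le C\|v_h\|_{1,\Omega_h}$, valid because under Assumption \ref{asm1} ($|\nabla\phi_h|\ge m/2$) the surface $\Gamma_h$ is a Lipschitz hypersurface with $h$-independent geometry bounding the subdomain $\{\phi_h<0\}\cap\Omega_h$, which is close to $\Omega$; (ii) the lower bound $\chi_h\ge\tfrac12$ on $\Gamma_h$ for $h$ small, which holds because $\chi=1$ on $\Gamma$, $\chi$ is Lipschitz (Sobolev embedding of $H^{k+1}$), $\Gamma_h$ is $O(h)$-close to $\Gamma$, and the interpolation error $\chi-\chi_h$ is small; and (iii) the fact that $(V,\omega)\mapsto\|V+\omega\times r\|_{0,\Gamma}$ is a norm on the finite-dimensional space of rigid velocities, whose counterpart on $\Gamma_h$ stays uniformly equivalent to $|V_h|+|\omega_h|$ as $\Gamma_h\to\Gamma$. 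Chaining these gives $|V_h|+|\omega_h|\le C\|V_h+\omega_h\times r\|_{0,\Gamma_h}\le 2C\|v_h\|_{0,\Gamma_h}\le C'\|v_h\|_{1,\Omega_h}$.

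For the first inequality I would write $w_h := \phi_h s_h = v_h - \chi_h(V_h+\omega_h\times r)$ and reduce everything to two cell-wise bounds on $\Omega_h^\Gamma$: namely $\|w_h\|_{0,\Omega_h^\Gamma}\le Ch\|D(v_h)\|_{0,\Omega_h}$ and $|w_h|_{1,\Omega_h^\Gamma}\le C\|D(v_h)\|_{0,\Omega_h}$. The second is immediate from the triangle inequality, Korn (Lemma \ref{LemKorn}) applied to $v_h$, the elementary bound $|\chi_h(V_h+\omega_h\times r)|_{1,\Omega_h}\le C(|V_h|+|\omega_h|)$, and \eqref{BoundVhomh} together with Poincaré (so that $\|v_h\|_{1,\Omega_h}\le C\|D(v_h)\|_{0,\Omega_h}$). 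The first, which is the crux, uses that each $T\in\mathcal T_h^\Gamma$ satisfies $T\cap\Gamma_h\ne\varnothing$ by \eqref{eq:def ThGamma}, so $w_h=\phi_h s_h$ has a zero in $T$; a scaling to the reference element together with a compactness argument over bounded-degree polynomials possessing a root in the cell yields the cut-independent Poincaré estimate $\|w_h\|_{0,T}\le Ch|w_h|_{1,T}$, and summation over $\mathcal T_h^\Gamma$ followed by the second bound closes it. Finally, the facet term and the $\sqrt h\,\|\cdot\|_{0,G_h}$ term follow from a discrete trace inequality on the cut cells (every facet of $\mathcal F_h^\Gamma$ and every facet of $G_h$ belongs to a cell of $\mathcal T_h^\Gamma$), turning $h\|w_h\|_{0,F}^2$ and $h\|w_h\|_{0,G_h}^2$ into $\|w_h\|_{0,\Omega_h^\Gamma}^2+h^2|w_h|_{1,\Omega_h^\Gamma}^2$, already controlled by $h^2\|D(v_h)\|_{0,\Omega_h}^2$.

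The main obstacle is the uniformity of all constants with respect to $h$ and, above all, to the position of $\Gamma_h$ inside the cut cells. The decisive point is the cut-independent Poincaré inequality on $\mathcal T_h^\Gamma$: a careless Poincaré/trace estimate would degenerate for cells clipped by $\Gamma_h$ near a vertex, reintroducing exactly the small-cut pathology that $\phi$-FEM is designed to avoid. The compactness argument circumvents this because, for polynomials of fixed degree, \emph{having a zero in $T$} is a closed condition incompatible with a nonzero constant limit, so no small-cut constant can appear; the roles of Assumption \ref{asm1} and of the structural identity $w_h=\phi_h s_h$ are precisely to guarantee such a zero and to keep the geometry of $\Gamma_h$ uniformly controlled. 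The analogous uniformity for the trace inequality and for the rigid-motion norm-equivalence in \eqref{BoundVhomh} is then obtained from the convergence $\Gamma_h\to\Gamma$ for $h\le h_0$ small enough.
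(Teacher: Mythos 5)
Your proposal is correct and follows essentially the same route as the paper's proof: the key steps --- exploiting that $\phi_h s_h$ vanishes on $\Gamma_h$ so that $v_h$ reduces to $\chi_h(V_h+\omega_h\times r)$ there, transferring the finite-dimensional rigid-motion norm equivalence from $\Gamma$ to $\Gamma_h$ (with $\chi_h$ bounded away from zero) and concluding \eqref{BoundVhomh} by a trace inequality, then gaining a factor $h$ for $\|\phi_h s_h\|_{0,\Omega_h^\Gamma}$ on the cut cells (which the paper imports as Lemma 3.4 of \cite{phifem} and you reprove self-containedly via the root-plus-compactness argument), and finishing with Korn and trace inverse inequalities for the facet and $G_h$ terms --- are exactly those of the paper. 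The only local difference is that where you assert the uniform equivalence of the rigid-motion norms on $\Gamma_h$ and $\Gamma$ from the convergence $\Gamma_h\to\Gamma$, the paper makes this quantitative by applying the divergence theorem to the field $|V_h+\omega_h\times r|^2\nabla\phi$ on the band $B_h^\Gamma$, yielding an $O(h^{k+1})$ perturbation term absorbed for $h$ small, which is precisely the rigorous instantiation of your step.
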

\begin{proof}
Take any $s_h \in \mathcal{V}_h,$ $V_h \in \mathbb{R}^d, ~ \omega_h \in \mathbb{R}^{d'}$ and denote $v_h = \phi_h s_h + \chi_h (V_h + \omega_h \times r)$. By \cite[Lemma 3.4]{phifem}
\begin{multline}   \label{fromPhiDir}
\| \phi_h s_h \|_{0, \Omega_h^{\Gamma}} \leqslant Ch |   \phi_h s_h |_{1, \Omega_h^{\Gamma}} \leqslant Ch (| v_h |_{1,   \Omega_h^{\Gamma}} + | \chi_h (V_h + \omega_h \times r) |_{1,   \Omega_h^{\Gamma}}) \\
\leqslant Ch (| v_h |_{1, \Omega_h^{\Gamma}} + \| \chi_h   \|_{1, \Omega_h^{\Gamma}} (| V_h | + | \omega_h |)) .
\end{multline}
By equivalence of norms
\begin{equation}\label{equiVhomh}
|V_h | + | \omega_h | \leqslant C \|V_h + \omega_h \times r\|_{0,   \Gamma} . \end{equation}
Denote by $B_h^{\Gamma}$ the band between $\Gamma$ and $\Gamma_h$. Applying the divergence theorem to the vector field $| V_h + \omega_h \times r |^2 {\nabla \phi}$ and noting that the normal on $\Gamma$ (resp. $\Gamma_h$) is given by $\pm \frac{\nabla \phi}{| \nabla \phi |}$ (resp. $\pm \frac{\nabla \phi_h}{| \nabla \phi_h |}$) gives
\[ \int_{\Gamma} | V_h + \omega_h \times r |^2| \nabla \phi | \leqslant \int_{\Gamma_h} | V_h    + \omega_h \times r |^2 \frac{|{\nabla \phi} \cdot\nabla \phi_h|}{| \nabla \phi_h |}
+ \left|\int_{B_h^{\Gamma}}    \Div \left( | V_h + \omega_h \times r |^2 {\nabla \phi} \right)\right|
\]
We now note that $|\nabla \phi|$ (resp. $| \nabla \phi_h |$ and $\chi_h$) are both positive and bounded away from 0 on $\Gamma$ (resp. on $\Gamma_h$) uniformly in $h$ for $h$ small enough, and the measure of $B_h^{\Gamma}$ is of order $h^{k + 1}$. The inequality above implies thus \[ \|V_h + \omega_h \times r\|_{0, \Gamma}^2 \leq C (\| \chi_h (V_h + \omega_h    \times r) \|^2_{0, \Gamma_h} + h^{k + 1}  (|V_h | + | \omega_h |)^2 ) . \] Combining this with (\ref{equiVhomh}) gives, for $h$ small enough \[ |V_h | + | \omega_h | \leqslant C \| \chi_h (V_h + \omega_h \times r)\|_{0,    \Gamma_h} = C \|v_h \|_{0, \Gamma_h} \] and, by the trace inequality,  $|V_h | + | \omega_h | \leqslant C \|v_h \|_{1, \Omega_h}$, \AL{i.e. (\ref{BoundVhomh})}.
Substituting this into (\ref{fromPhiDir}) and combining with the Korn inequality (\ref{Korn}) yields the announced estimate for $\| \phi_h s_h \|_{0, \Omega_h^{\Gamma}}$ since $\| \chi_h   \|_{1, \Omega_h^{\Gamma}}$ is bounded uniformly in $h$. The remaining part of the estimate follows by trace inverse inequalities as in \cite[Lemma 3.5]{phifem}.
\end{proof}

\subsection{Interpolation by finite elements multiplied with the level set }\label{sec:inter}

We recall first a Hardy-type inequality, cf. \cite{phifem}.
\begin{lemma}  \label{lemma:hardy}
For any integer $s\in[0,k]$ and any $u \in H^{s + 1} (\Omega_h)$ vanishing on $\Gamma$, it holds
$ \left\| \displaystyle{\frac{u}{\phi}} \right\|_{s, \Omega_h} \le C \| u \|_{s + 1, \Omega_h}$  with $C > 0$ depending only on the constants in Assumption \ref{asm0} and on $s$.
\end{lemma}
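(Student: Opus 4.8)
The plan is to localize near $\Gamma$ with a partition of unity and reduce the estimate, in each boundary chart, to a one-dimensional Hardy inequality in the direction transverse to $\Gamma$, handling the higher-order derivatives by a Taylor / fundamental-theorem-of-calculus representation rather than by differentiating $u/\phi$ directly.

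First I would fix, once and for all, the cover $\mathcal{O}_1,\ldots,\mathcal{O}_I$ of $\Gamma$ from Assumption \ref{asm0} together with one extra open set $\mathcal{O}_0$ whose closure misses $\Gamma$ and which, with the $\mathcal{O}_i$, covers a fixed neighborhood of $\overline\Omega$ containing all the $\Omega_h$ for $h\le h_0$. Choosing a smooth partition of unity $\{\theta_i\}_{i=0}^I$ subordinate to this cover, with derivatives bounded independently of $h$, and writing $u/\phi=\sum_{i=0}^I \theta_i u/\phi$, it suffices to bound each $\|\theta_i u/\phi\|_{s,\Omega_h}$. On $\mathcal{O}_0$ we have $|\phi|\ge c>0$ with all derivatives of $\phi$ bounded, so $1/\phi$ is a fixed smooth multiplier and $\|\theta_0 u/\phi\|_{s,\Omega_h}\le C\|u\|_{s,\Omega_h}\le C\|u\|_{s+1,\Omega_h}$, no cancellation being needed there.

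For $i\ge 1$ I would pass to the local coordinates $\xi=(\xi_1,\dots,\xi_d)$ with $\xi_d=\phi$ provided by Assumption \ref{asm0}. Since the coordinate change and its inverse have all derivatives up to order $k+1\ge s+1$ bounded by $C_0$, this change of variables distorts the $H^s$ and $H^{s+1}$ norms only by constants depending on $C_0$ and $s$, maps $\Gamma$ to $\{\xi_d=0\}$, and identifies $\phi$ with $\xi_d$. Setting $\tilde u=\theta_i u$, which still vanishes at $\xi_d=0$ and is supported in a region whose $\xi_d$-sections are intervals containing $0$ for $h$ small, the fundamental theorem of calculus gives
$$\frac{\tilde u(\xi',\xi_d)}{\xi_d}=\int_0^1 (\partial_{\xi_d}\tilde u)(\xi',t\xi_d)\,dt ,$$
hence for every multi-index $\alpha$ with $|\alpha|\le s$
$$\partial^\alpha\!\left(\frac{\tilde u}{\xi_d}\right)(\xi)=\int_0^1 t^{\alpha_d}(\partial^\alpha\partial_{\xi_d}\tilde u)(\xi',t\xi_d)\,dt .$$
Applying Minkowski's integral inequality together with the rescaling $\eta_d=t\xi_d$, which produces a factor $t^{-1/2}$ in the $L^2$ norm, yields
$$\left\|\partial^\alpha\!\left(\tilde u/\xi_d\right)\right\|_{0}\le \int_0^1 t^{\alpha_d-1/2}\,dt\;\left\|\partial^\alpha\partial_{\xi_d}\tilde u\right\|_{0}\le 2\,\|\tilde u\|_{s+1} ,$$
since $\int_0^1 t^{\alpha_d-1/2}\,dt=(\alpha_d+\tfrac12)^{-1}\le 2$ and $|\alpha|+1\le s+1$. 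Summing over $|\alpha|\le s$, transforming back to the $x$ variables, and using $\|\theta_i u\|_{s+1}\le C\|u\|_{s+1,\Omega_h}$ gives the bound for the $i$-th piece; summing over $i$ finishes the proof.

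The only genuinely delicate point is the treatment of the higher-order derivatives: differentiating $u/\phi$ termwise creates increasingly singular factors $\phi^{-j}$, so a naive Leibniz computation does not close. The Taylor representation above is exactly what avoids this, reducing every derivative of $u/\phi$ of order $\le s$ to a single transverse derivative of $u$ of order one higher, integrated against an $L^1$ weight in the homothety parameter $t$. The remaining care is bookkeeping: verifying that the charts of Assumption \ref{asm0} straighten $\Gamma$ and keep each transverse section an interval about $\{\xi_d=0\}$ for $h\le h_0$, and that all constants depend only on $C_0$, the fixed partition of unity, and $s$, hence are independent of $h$.
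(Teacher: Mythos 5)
You should first be aware that the paper does not actually prove Lemma \ref{lemma:hardy}: it recalls it from \cite{phifem}, where it is established by essentially the strategy you adopt --- localization by a partition of unity subordinate to the cover of Assumption \ref{asm0}, straightening of $\Gamma$ by the coordinates $\xi$ with $\xi_d=\phi$, and a one-dimensional Hardy-type bound in the transverse variable. Within that strategy, your core computation is correct and clean: the representation $\partial^\alpha(\tilde u/\xi_d)=\int_0^1 t^{\alpha_d}(\partial^\alpha\partial_{\xi_d}\tilde u)(\xi',t\xi_d)\,dt$, Minkowski's integral inequality, the dilation factor $t^{-1/2}$ and the integrable weight $t^{\alpha_d-1/2}$ handle all orders $s\le k$ at once, avoiding the induction on $s$ through the increasingly singular quotient Leibniz identity (the mechanism appearing in (\ref{derw}) of this paper), and the constants depend only on $C_0$, the fixed partition of unity, and $s$, as required.

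The genuine gap is the step you dismiss as bookkeeping: that in each chart the $\xi_d$-sections of the image of $\Omega_h$ are intervals containing $0$. Nothing in Assumptions \ref{asm0}--\ref{asm1} yields this, and for the actual $\Omega_h$ of (\ref{eq:def Th}) it can fail: the inner boundary $G_h$ is a staircase-like union of whole facets selected by the sign of $\phi_h$, and a chart line $\{\xi'=\text{const}\}$, transverse to the level sets of $\phi$ but curved in $x$-space, can cross such a staircase several times, so the segment joining a point of $\Omega_h$ to its foot on $\Gamma$ may leave $\Omega_h$ through a cell excluded from $\Th$; refining $h$ does not remove this effect. Worse, as the footnote accompanying (\ref{eq:def Th})--(\ref{def G_h}) warns, $G_h$ may slightly penetrate $\Omega$, in which case some sections do not reach $\{\xi_d=0\}$ at all, the anchor $\tilde u(\xi',0)=0$ for the fundamental theorem of calculus is unavailable, and yet $|\phi|$ can be arbitrarily small on the adjacent part of $\Omega_h$, so these points cannot simply be discarded. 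A complete argument therefore needs an additional ingredient, for instance an $h$-uniformly bounded extension $H^{s+1}(\Omega_h)\to H^{s+1}$ across $G_h$ (plausible since the $\Omega_h$ are unions of shape-regular, quasi-uniform cells, but itself requiring justification), after which one can work on a fixed chart box whose sections are genuinely star-shaped --- together with a separate estimate on the thin exceptional set where the extension no longer vanishes on $\Gamma$, controlled through the vanishing of $u$ on the nearby part of $\Gamma\cap\overline{\Omega_h}$. As written, your proof is valid on the region whose sections do reach $\Gamma$ inside $\Omega_h$; the reduction of the general case to that region is a missing step, not mere bookkeeping.
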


This allows us to prove the following bound for interpolation by the products of finite elements with $\phi_h$.
\begin{lemma}\label{LemInterp}  Let $t$ be an integer $1 \leqslant t \leqslant k + 1$. 
For any $v \in H^t  (\Omega_h)^d \cap H^1_0(\Omega)$ there exists $w_h \in \mathcal V_h$ s.t.
\begin{equation}    \label{eq:v-wh phih}
\| v - \phi_h w_h \|_{s, \Omega_h} \leqslant Ch^{t -    s} \| v \|_{t, \Omega_h}, \quad s = 0, 1
\end{equation}
with $C > 0$ depending only on $t,s$, the constants in Assumptions \ref{asm0}--\ref{asm1}, and the mesh  regularity.
\end{lemma}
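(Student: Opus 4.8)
The plan is to reduce the statement to a standard interpolation estimate for the auxiliary function $w:=v/\phi$ and then transfer the estimate back to $v-\phi_h w_h$. Since $v\in H^1_0(\Omega)$ vanishes on the whole of $\partial\Omega=\Gamma\cup\Gamma_w$, and $\phi$ vanishes on $\Gamma$ with $|\nabla\phi|$ bounded away from zero there (Assumption \ref{asm1}), the quotient $w=v/\phi$ is well defined. I would first invoke the Hardy inequality (Lemma \ref{lemma:hardy}) with $s=t-1$ to obtain $w\in H^{t-1}(\Omega_h)^d$ together with the \emph{optimal} bound $\|w\|_{t-1,\Omega_h}\le C\|v\|_{t,\Omega_h}$. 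This is exactly the place where the sharpening over \cite{phifem} enters: the norm of $w$ is controlled by $\|v\|_{t}$ and not by a stronger norm. I would also note that $w=0$ on $\Gamma_w$ (as $v=0$ and $\phi\neq0$ there), so that the interpolant of $w$ can be taken in $\mathcal V_h$.

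Next I would construct $w_h\in\mathcal V_h$ as the degree-$k$ (quasi-)interpolant of $w$, and exploit the key compatibility between the two interpolations. Because $\phi_h$ is the degree-$k$ nodal interpolant of $\phi$, at every Lagrange node $a$ we have $\phi_h(a)=\phi(a)$, and if $w_h(a)=w(a)=v(a)/\phi(a)$ then $(\phi_h w_h)(a)=v(a)$. Consequently the degree-$k$ nodal interpolants satisfy $\mathcal I_h(\phi_h w_h)=\mathcal I_h v$. This yields the decomposition
\[
v-\phi_h w_h=(v-\mathcal I_h v)+\bigl(\mathcal I_h(\phi_h w_h)-\phi_h w_h\bigr),
\]
in which the first term is the classical interpolation error of $v\in H^t$, bounded by $Ch^{t-s}|v|_{t,\Omega_h}$ for $s=0,1$, i.e. already optimal. (For low $t$, where nodal values are not defined, I would replace $\mathcal I_h$ by a Scott–Zhang quasi-interpolant and carry the compatibility identity in an approximate form.)

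It then remains to estimate the second term, the elementwise interpolation error of the product $\phi_h w_h$, which is a piecewise polynomial and hence smooth on each cell. On each $T$ one has $\|\phi_h w_h-\mathcal I_h(\phi_h w_h)\|_{s,T}\le Ch^{t-s}|\phi_h w_h|_{t,T}$, and by the Leibniz rule $|\phi_h w_h|_{t,T}\le C\sum_{i=0}^{t}|\phi_h|_{W^{i,\infty}(T)}\,|w_h|_{t-i,T}$, where $|\phi_h|_{W^{i,\infty}}$ is uniformly bounded (Assumption \ref{asm0}) and vanishes for $i>k$. The terms with $i\ge1$ reduce, after interpolation stability, to local norms of $w$ of order at most $t-1$ and are controlled by $\|w\|_{t-1,\Omega_h}\le C\|v\|_{t,\Omega_h}$; the contribution of $(\phi-\phi_h)w_h$, which I would treat separately, is of higher order $h^{k+1-s}\le h^{t-s}$ since $k+1\ge t$.

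The main obstacle is the single term $i=0$, namely $\|\phi_h\|_{\infty,T}\,|w_h|_{t,T}$, in the interior of $\Omega_h$. Here the degeneracy of $w=v/\phi$ near $\Gamma$ makes $|w|_{t}$ blow up like a negative power of $\mathrm{dist}(\cdot,\Gamma)$, and only the vanishing $v|_\Gamma=0$ (one order) is available, with no control on $\partial^j v|_\Gamma$ for $j\ge1$. The plan for overcoming it is to pair each inverse power of $\phi$ produced by differentiating $1/\phi$ with the factor $\phi_h\sim\phi$ carried by $\phi_h$, which removes one power of the weight, and then to split $\Omega_h$ into the boundary band $\Omega_h^{\Gamma,ext}$, where $|\phi_h|\le Ch$ restores the missing power of $h$ directly, and the interior, where $|\nabla\phi|\ge m$ (Assumption \ref{asm1}) lets $\phi$ act as a genuine coordinate so that the remaining weighted low-order terms are reassembled and absorbed into $\|v\|_{t,\Omega_h}$ through the Hardy inequality. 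Verifying that this weighted bookkeeping closes at the optimal order $h^{t-s}$, uniformly in $h$ and using only $v|_\Gamma=0$, is the technical heart of the argument.
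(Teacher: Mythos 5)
Your opening coincides with the paper's own proof: both set $w=v/\phi$, invoke the Hardy inequality of Lemma \ref{lemma:hardy} to get the optimal bound $\|w\|_{t-1,\Omega_h}\le C\|v\|_{t,\Omega_h}$, interpolate $w$, and close the estimate by exactly the two-zone mechanism you sketch at the end (on cells of $\mathcal{T}_h^{\Gamma,ext}$ the factor $\|\phi\|_{\infty,T}\le Ch$ supplies the missing power of $h$; on the remaining cells the bounded ratio $\|\phi\|_{\infty,T}/\min_{\omega_T}|\phi|\le 1+2M/m$ from Assumption \ref{asm1} pairs $\phi$ against the inverse powers of $\phi$ created by differentiating $1/\phi$ --- note in passing that what is used in the interior is $|\phi|\ge mh$, not $|\nabla\phi|\ge m$). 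The genuine gap is structural: your decomposition hinges on the exact nodal compatibility $\mathcal{I}_h(\phi_h w_h)=\mathcal{I}_h v$, which requires point values of $w\in H^{t-1}(\Omega_h)$, i.e.\ $t-1>d/2$, hence $t\ge 3$ for $d=2,3$. The low-regularity cases $t=1,2$ are not marginal here: the lemma is applied with $t=1$ in the proof of Lemma \ref{lemma:magic} (interpolating the $H^1$ extension $\tilde v$ of the inf-sup test field), so the statement is needed precisely where your identity is undefined. Your fallback --- Scott--Zhang with ``the compatibility identity in an approximate form'' --- is not an identity at all: a Scott--Zhang operator is built from facet/patch averages, and since $\phi_h w_h\neq v$ away from the nodes, the discrepancy $\mathcal{I}_h^{SZ}v-\mathcal{I}_h^{SZ}(\phi_h w_h)$ is a new error term whose estimation on a patch essentially amounts to bounding $\|v-\phi_h w_h\|$ there, which is circular. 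The natural repair is the direct algebraic splitting $v-\phi_h w_h=\phi(w-w_h)+(\phi-\phi_h)w_h$ --- and that is exactly the paper's proof, which makes the detour through $\mathcal{I}_h v$ superfluous.

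The part you explicitly leave unverified is also where your route is strictly harder than the paper's. Because your second term carries $|\phi_h w_h|_{t,T}$, you must control $|w_h|_{t,T}$ even on band cells where $w$ possesses only $t-1$ derivatives, forcing an inverse inequality whose $h^{-1}$ loss is then recovered by $\|\phi_h\|_{\infty,T}\le Ch$; this can be made to close at order $h^{t-s}$, but in the interior it additionally requires $w\in H^t(\omega_T)$ \emph{locally}, which in turn requires the patches $\omega_T$ of interior cells to stay away from the region where $\phi$ degenerates. The paper secures this by constructing $I_h^c$ with the locality property $\omega_T\subset\mathcal{T}_h^{\Gamma,ext}$ for band cells and $\omega_T\subset\mathcal{T}_h\setminus\mathcal{T}_h^{\Gamma,ext}$ otherwise; your sketch never secures it. The paper's splitting also avoids $|w_h|_{t,T}$ altogether: in the band it estimates $\phi(w-w_h)$ using the interpolation error with only $t-1$ derivatives of $w$ (no inverse inequality needed), and it obtains $s=1$ from the already proven case $s=0$ by a single inverse-inequality argument via $v_h=I_h^c v$, instead of re-running the cellwise bookkeeping. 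In summary: the weighted two-zone mechanism you identify is the right one and matches the paper, but the skeleton of your decomposition fails for the cases $t=1,2$ that the paper actually uses, and both the declared ``technical heart'' and the needed patch-locality of the quasi-interpolant are left unproven.
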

\begin{proof}
Let $v \in H^t (\Omega_h)^d$, $v=0$ on $\Gamma$, and set $w = v / \phi$. Thanks to Lemma  \ref{lemma:hardy}, $w \in H^{t - 1} (\Omega_h)^d$ and $ \left\| w \right\|_{t-1, \Omega_h} \le C \| v \|_{t, \Omega_h}$. Consider $w_h =  I_h^c w$, where $I_h^c$ is a Scott-Zhang interpolation operator. For any $T\in\Th$,  let $\omega_T$ denote the patch of mesh cells adjacent to $T$ (not necessarily all  the adjacent cells) regrouping the cells affected by the construction of  $I_h^c w$ on $T$, so that $I_h^c w$ on $T$ depends on $w$ only \vanessa{through} its restriction to $\omega_T$. The Scott-Zhang interpolation operator  can be constructed so that  $\omega_T\subset\mathcal{T}_h^{\Gamma,ext}$ for all $T\in\mathcal{T}_h^{\Gamma,ext}$, and $\omega_T\subset\mathcal{T}_h \setminus \mathcal{T}_h^{\Gamma,ext}$ for all $T\in\mathcal{T}_h\setminus  \mathcal{T}_h^{\Gamma,ext}$.  In what follows, we assume that the operator $I_h^c$ enjoys this property together with the usual  interpolation error estimates see for instance \cite{brenner2008mathematical}.

Our first goal is to prove  (\ref{eq:v-wh  phih}) for $s = 0$.
Taking any $T \in \mathcal{T}_h$. Recall that $\phi$ is supposed to be of  class (at least) $C^t$ so that $\| \phi - \phi_h \|_{\infty, T}  \leqslant Ch^t$. Hence,
\begin{align}      \label{cas0}
\|v - \phi_h w_h \|_{0, T} & =  \| \phi w - \phi_h      w_h \|_{0, T}\\
	\notag      & \leqslant  \| \phi \|_{\infty, T} \|w - w_h \|_{0, T} + \|      \phi - \phi_h \|_{\infty, T} \|w_h \|_{0, T}\\
	\notag     & \leqslant  \| \phi \|_{\infty, T} \|w - w_h \|_{0, T} + Ch^t      \|w\|_{0, \omega_T}    .
\end{align}
To continue this proof, we distinguish two cases: the cells   $T \in \mathcal{T}_h^{\Gamma, ext}$ close to $\Gamma_h$ and the remaining cells, which are at the distance of at least order $h$ from $\Gamma_h$.
	\begin{itemize}
		\item[(i)] Consider $T \in \mathcal{T}_h^{\Gamma, ext}$.    We have $\| \phi  \|_{\infty, T} \leqslant Ch$ on these cells since they are at    the distance $\sim h$ from $\Gamma$. Noting that $\|w - w_h \|_{0, T} \leqslant Ch^{t - 1} | w  |_{t - 1, \omega_T}$    by the usual interpolation estimate, we derive from (\ref{cas0})
		\begin{equation}      \label{casi}
		\|v - \phi_hw_h \|_{0, T} \leqslant      Ch^t \left(| w  |_{t - 1, \omega_T} + \|w\|_{0, \omega_T}\right)\,.
		\end{equation}
		\item[(ii)] Now consider $T \in \mathcal{T}_h \setminus    \mathcal{T}_h^{\Gamma, ext}$. We note that $\phi$   does not vanish on $\omega_T$ for such $T$ (recall that $\omega_T \subset \mathcal{T}_h \setminus    \mathcal{T}_h^{\Gamma, ext}$), so that $w \in H^t (\omega_T)$    and, by (\ref{cas0}) and the usual approximation estimates,
		\begin{equation}      \label{casii0}
		\|v - \phi_hw_h \|_{0, T} \leqslant      Ch^t \left(\| \phi \|_{\infty, T} | w  |_{t, \omega_T}  + \|w\|_{0, \omega_T}\right)\,.  \end{equation}
		In order to bound $| w  |_{t, \omega_T}$ here, \AL{we recall the Leibniz rule  valid for any multi-index $\alpha \in \mathbb{N}^d$
		$$
		\partial^{\alpha} v =\partial^{\alpha} (\phi w)
=\sum_{\scriptsize{\begin{array}{c}        \beta  \in \mathbb{N}^d\\        \beta \leqslant \alpha      \end{array}}}
C_{\alpha}^{\beta} \,(\partial^{\beta} \phi) \,  (\partial^{\alpha - \beta} w)
		$$
		with binomial coefficients $C_{\alpha}^{\beta}$ depending only on the    multi-indices $\alpha$ and $\beta$ (this formula can be easily proven by induction on the length of $\alpha$) $\beta \leqslant \alpha$ means $\forall i=1,\dots, d, \beta_i\leqslant \alpha_i$. If $\alpha\not=0$, this can be rewritten, by separating the term with $\beta=0$ (note that $C^0_\alpha=1$) and dividing by $\phi$, as
		\begin{equation}      \label{derw} \partial^{\alpha} w = \frac{1}{\phi} \partial^{\alpha} v     \quad - \sum_{\scriptsize{\begin{array}{c}        \beta  \in \mathbb{N}^d\\        \beta \leqslant \alpha, \beta \neq 0      \end{array}}} C_{\alpha}^{\beta} \frac{\partial^{\beta} \phi}{\phi}      \partial^{\alpha - \beta} w    \end{equation}}
		Applying (\ref{derw}) to $w$ on $\omega_T$ gives    \[ |w|_{t, \omega_T} \leqslant \frac{1}{\min_{\omega_T} | \phi |}       (|v|_{t, \omega_T} +C\|w\|_{t - 1, \omega_T}) . \]    Hence, by (\ref{casii0}),
		\begin{equation} \label{casii1}
		\|v - \phi_hw_h \|_{0, T}    \leqslant Ch^t  \left(\frac{\| \phi \|_{\infty, T}}{\min_{\omega_T} |        \phi |} (|v|_{t, \omega_T} +\|w\|_{t - 1, \omega_T})+ \|w\|_{0, \omega_T}\right).
		\end{equation}
		Recall that  $\min_{\omega_T} | \phi | \geqslant mh$ by Assumption \ref{asm1}. The distance between any point on $T$ and any point on  $\omega_T$ is at most $2h$    so that    \[ \frac{\| \phi \|_{\infty, T}}{\min_{\omega_T} | \phi |} = 1 +       \frac{\max_{T} | \phi | - \min_{\omega_T} | \phi       |}{\min_{\omega_T} | \phi |} \leqslant 1 + \frac{2Mh}{mh}
		\leqslant 1 + \frac{2M}{m}\]
		with $M$ denoting an upper bound on $| \nabla \phi |$.  Substituting this into (\ref{casii1}) gives
		\begin{equation}        \label{casii}
		\|v - \phi_hw_h \|_{0, T}   \leqslant Ch^t (|v|_{t, \omega_T} +\|w\|_{t - 1, \omega_T}).
		\end{equation} 	\end{itemize}
Summing (\ref{casi}) over all the cells $T \in \mathcal{T}_h^{\Gamma,ext}$
and (\ref{casii}) over  all the remaining cells of mesh $\Th$ gives
\[ \|v - \phi_h w_h \|_{0, \Omega_h} \leqslant Ch^t (|v|_{t, \Omega_h} +\|w\|_{t-1, \Omega_h}) \,. \]
This yields  \eqref{eq:v-wh phih} with $s = 0$ thanks to the estimate $\| w \|_{t - 1,  \Omega_h} \leqslant \| v \|_{t, \Omega_h}$ given by Lemma \ref{lemma:hardy}.

Let us now prove \eqref{eq:v-wh phih} for $s = 1$. Introduce $v_h = I_h^c v$  so that
\[ |v - v_h |_{1, \Omega_h} \leqslant Ch^{t - 1} |v|_{t, \Omega_h}
\quad\tmop{and}\quad
\| v - v_h \|_{0, \Omega_h} \leqslant Ch^t |v|_{t, \Omega_h} .  \]
Then, combining the already proven estimate \eqref{eq:v-wh phih} for $s = 0$, the  inverse inequality, and the interpolation estimates above, we obtain
\begin{align*}       |v - \phi_h w_h |_{1, \Omega_h} & \leqslant |v_h - \phi_h {w_h}       |_{1, \Omega_h} + |v - v_h |_{1, \Omega_h}\\
& \leqslant \frac{C}{h} \|v_h - \phi_h w_h\|_{0, \Omega_h} + |v - v_h |_{1,       \Omega_h}\\
       & \leqslant \frac{C}{h} \|v  - \phi_h w_h  \|_{0, \Omega_h} +       \frac{C}{h} \| v - v_h \|_{0, \Omega_h} + |v - v_h |_{1, \Omega_h}\\
       & \leqslant Ch^{t - 1} \|v\|_{t, \Omega_h} .
 \end{align*}
\end{proof}

\subsection{An adaptation of the Taylor-Hood inf-sup stability to $\phi$-FEM}\label{sec:taylor}
In this section, we prove some velocity-pressure inf-sup conditions that will be \vanessa{used} to establish the generalized coercivity (inf-sup) for the full bilinear form in the next section. The proofs are inspired by \cite{guzman} \AL{and start from an auxiliary inf-sup condition of Lemma \ref{lemGuzman} with respect to an $h$-dependent norm for the pressure. The final result in Lemma \ref{lemma:magic} is weaker than the usual inf-sup since it does not provide the control of the $L^2$ norm of the pressure over the whole domain $\Omega_h$. It will be however sufficient for our purposes since the encumbering term $- Ch^2 |p_h |^2_{1, \Omega_h^{\Gamma}}$ will be controlled by the stabilization present in the scheme, cf. Lemma \ref{LemDir:prop1 bis} for this matter.}

\begin{lemma}\label{lemGuzman}
There exists an $h$-independent constant $C > 0$ such that $\forall p_h \in \mathcal M_h$, $\exists w_h  \in \mathcal V_h$ satisfying
\begin{equation}    \label{infsup1h}
h^2 |p_h |^2_{1, \Omega_h} - Ch^2 |p_h |^2_{1, \Omega_h^{\Gamma}}
\leqslant \int_{\Omega_h}    \nabla p_h \cdot (\phi_h w_h)
\quad    \tmop{and} \quad
| \phi_h w_h |_{1, \Omega_h} \leqslant Ch |p_h |_{1,    \Omega_h} .
\end{equation}
\end{lemma}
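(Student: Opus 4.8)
The plan is to build $w_h$ so that $\phi_h w_h$ is, up to a defect localised near the interface, equal to $h^2\nabla p_h$. Indeed, if we had $\phi_h w_h=h^2\nabla p_h$ exactly, then $\int_{\Omega_h}\nabla p_h\cdot\phi_h w_h=h^2\|\nabla p_h\|_{0,\Omega_h}^2=h^2|p_h|_{1,\Omega_h}^2$, which is the first estimate with no correction, while the second would follow from the inverse inequality applied to the piecewise-polynomial $\phi_h w_h$ on the quasi-uniform mesh $\Th$, via $|\phi_h w_h|_{1,\Omega_h}\le Ch^{-1}\|\phi_h w_h\|_{0,\Omega_h}\le Ch^{-1}\cdot h^2\|\nabla p_h\|_{0,\Omega_h}=Ch|p_h|_{1,\Omega_h}$. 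The obstruction is that the natural candidate $w_h=h^2\nabla p_h/\phi_h$ is neither a polynomial nor even bounded, because $\phi_h$ vanishes on $\Gamma_h$; the whole difficulty is thus concentrated on the cut cells $\Th^\Gamma$, and the term $-Ch^2|p_h|_{1,\Omega_h^\Gamma}^2$ is precisely what we are allowed to lose there.

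First I would replace the discontinuous field $\nabla p_h$ by a continuous finite element velocity. By the classical Verf\"urth argument for the Taylor--Hood pair, applied on the shape-regular mesh $\Th$ seen as a fitted mesh of $\Omega_h$ (Assumption \ref{asmStokes} guaranteeing the underlying inf-sup near $\Gamma_w$), there is a continuous $\mathbb P^k$ velocity $\tilde v_h$ vanishing on $\Gamma_w$ with $\int_{\Omega_h}\nabla p_h\cdot\tilde v_h\ge\beta|p_h|_{1,\Omega_h}^2$, $\|\tilde v_h\|_{0,\Omega_h}\le C\|\nabla p_h\|_{0,\Omega_h}$ and $\|\tilde v_h\|_{1,\Omega_h}\le Ch^{-1}|p_h|_{1,\Omega_h}$, where $\beta>0$ is independent of $h$ and $\tilde v_h$ is a smoothed (averaged) interpolant of $\nabla p_h$. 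The target then becomes the cleaner relation $\phi_h w_h\approx h^2\tilde v_h$, the gain being that $\tilde v_h$ is now an honest, continuous function.

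Next I would define $w_h$ by a Scott--Zhang-type quasi-interpolation of $h^2\tilde v_h/\phi_h$ adapted to the geometric bands, exactly in the spirit of Lemma \ref{LemInterp}: on the cells where $|\phi_h|\gtrsim h$ (outside a thin layer around $\Gamma_h$, which by Assumption \ref{asm1} lies inside $\Th^{\Gamma,ext}$) the division by $\phi_h$ is legitimate and, controlling $\tilde v_h/\phi_h$ through the Hardy inequality of Lemma \ref{lemma:hardy}, one obtains $w_h\in\mathcal V_h$ with $\phi_h w_h=h^2\tilde v_h+e$ and $e$ of higher order there; on the cut cells one simply forgoes the approximation, $\phi_h w_h$ remaining of size $O(h^2\|\nabla p_h\|)$ because $|\phi_h|\lesssim h$ and $w_h$ inherits the $O(h)$ size of the neighbouring interior values. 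Writing $\int_{\Omega_h}\nabla p_h\cdot\phi_h w_h=h^2\int_{\Omega_h}\nabla p_h\cdot\tilde v_h+\int_{\Omega_h}\nabla p_h\cdot e$, the first term is $\ge\beta h^2|p_h|_{1,\Omega_h}^2$ and the defect is estimated by Cauchy--Schwarz together with $\|e\|_{0,\Omega_h^\Gamma}\le Ch^2\|\nabla p_h\|_{0,\Omega_h^\Gamma}$, which yields exactly $-Ch^2|p_h|_{1,\Omega_h^\Gamma}^2$; replacing $w_h$ by $w_h/\beta$ normalises the leading constant to $1$ and gives (\ref{infsup1h}). The continuity bound then follows from the inverse inequality and $\|\phi_h w_h\|_{0,\Omega_h}\le Ch^2|p_h|_{1,\Omega_h}$.

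The hard part will be the bookkeeping on and around the cut cells: one must keep $w_h$ globally continuous across the boundary between $\Th^\Gamma$ and $\Th\setminus\Th^\Gamma$ while confining the defect $e$ to $\Omega_h^\Gamma$ (and not merely to the wider band $\Th^{\Gamma,ext}$, which the correction term would not absorb), and do so without any logarithmic loss coming from the degeneration of $1/\phi_h$. I expect the clean way to organise this is through the patch structure of Assumption \ref{asm2}, pairing each cluster $\Pi_k^\Gamma$ of cut cells with an interior cell $T_k$ on which $\phi_h$ is safely bounded below, exactly as the patches are exploited in Lemma \ref{LemDir:prop1 bis}; this localises the transition and lets the Hardy and interpolation estimates of Lemmas \ref{lemma:hardy}--\ref{LemInterp} be applied patch by patch with $h$-uniform constants.
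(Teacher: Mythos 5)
There is a genuine gap, and it sits exactly where you located ``the whole difficulty'': the claim that, after quasi-interpolating $h^2\tilde v_h/\phi_h$ away from the cut cells, the defect $e=\phi_h w_h-h^2\tilde v_h$ is \emph{of higher order}. It is not. The function $\tilde v_h$ is only a finite element field, so every derivative costs a factor $h^{-1}$ by inverse estimates; consequently, on any cell at distance $d\gtrsim h$ from $\Gamma_h$ one gets at best $\|e\|_{0,T}\le C\,\|\phi_h\|_{\infty,T}\,h\,\bigl|h^2\tilde v_h/\phi_h\bigr|_{1,\omega_T}\le C h^2\|\tilde v_h\|_{0,\omega_T}$, with no extra power of $h$ — and this is the same order as the main quantity $\|h^2\tilde v_h\|_{0,T}$ itself. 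Summing, $\bigl|\int_{\Omega_h}\nabla p_h\cdot e\bigr|\le C' h^2|p_h|^2_{1,\Omega_h}$ with an interpolation-type constant $C'$ that bears no relation to the inf-sup constant $\beta$ of your Step 1, so the signed main term $\beta h^2|p_h|^2_{1,\Omega_h}$ cannot dominate the defect; the loss is spread over all of $\Omega_h$, not confined to $\Omega_h^{\Gamma}$ where the correction term could absorb it. (Using a higher-order interpolant does not help: $|h^2\tilde v_h/\phi_h|_{s,\omega_T}\sim h^{2-s}\|\tilde v_h\|$ by the same inverse estimates.) A secondary but real flaw is the appeal to Lemma \ref{lemma:hardy}: the Hardy inequality there requires the numerator to vanish on $\Gamma$, which your smoothed field $\tilde v_h$ does not, so it cannot be used to control $\tilde v_h/\phi_h$ even locally.

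The paper's proof shows why a perturbative ``$\phi_h w_h\approx h^2\nabla p_h$ up to a small defect'' strategy must be replaced by a \emph{structural} one: positivity is arranged pointwise rather than by approximation. It takes the Verf\"urth-type edge construction you invoke in Step 1 but builds the division by the level set directly into it, setting $w_h=\sum_{E\in\mathcal E_h^i}\frac{h^2}{\bar\phi_E}\psi_E\,(t_E\cdot\nabla p_h)\,t_E$ with a per-edge constant $\bar\phi_E$ (either $-h$ near the band, or the mean of $\phi_h$ on $E$). Then on interior cells the integrand $\frac{\phi_h}{\bar\phi_E}\psi_E\,|t_E\cdot\nabla p_h|^2$ is pointwise nonnegative — $\phi_h\le 0$ there, $\bar\phi_E<0$, $\psi_E\ge 0$ — so no cancellation can occur; the uniform lower bound $\int_T\frac{\phi_h}{\bar\phi_E}\psi_E|t_E\cdot\nabla p_h|^2\ge c\int_T|t_E\cdot\nabla p_h|^2$ is then obtained by a three-case analysis of $\phi_h/\bar\phi_E$ under Assumption \ref{asm1} and a scaling/compactness argument, while the cut-cell contributions are crudely bounded by $Ch^2|p_h|^2_{1,T}$ (there $|\phi_h|\lesssim h$), which is precisely the allowed correction on $\Omega_h^{\Gamma}$. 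Your Step 1 and your use of Assumption \ref{asmStokes} near $\Gamma_w$ are sound and essentially coincide with the paper's starting point, and your inverse-inequality argument for the bound $|\phi_h w_h|_{1,\Omega_h}\le Ch|p_h|_{1,\Omega_h}$ is fine; but the passage from $\tilde v_h$ to $w_h$ needs the weighted edge-by-edge construction (or some other mechanism enforcing a pointwise sign), not a quasi-interpolation of the singular quotient.
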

\begin{proof}  Let us introduce the submesh $\mathcal{T}_h^i =\mathcal{T}_h \setminus  \mathcal{T}_h^{\Gamma}$ \AL{so that $\phi_h\leqslant 0$ on $T\in\Th^i$.} Denote by $\mathcal{E}_h^i$ the set of the edges  of the mesh $\mathcal{T}_h^i$ including those shared with  $\mathcal{T}_h^{\Gamma}$, but excluding those lying on $\Gamma_w$. For any  edge $E \in \mathcal{E}_h^i$, let $t_E$ be the unit tangent vector to $E$  (any of two, but fixed), $x_E$ be the midpoint of $E$, $\omega (E)$ be the  set of the mesh cells sharing $E$, and $\psi_E$ be the piecewise quadratic  function such that $\psi_E (x_E) = 1$ and $\psi_E$ vanishes at all the other  edge midpoints and at all the nodes of $\Th$. Moreover, define for all $E  \in \mathcal{E}_h^i$,
\[ \bar{\phi}_E = \left\{\begin{array}{ll}       - h, & \text{if } E   \text{ belongs to a cell from } \Th^{\Gamma,ext},\\       \displaystyle{\frac{1}{|E|}  \int_E \phi_h}, & \text{otherwise.}     \end{array}\right.
\]
Take any $p_h \in \mathcal M_h$ and set $w_h \in \mathcal V_h$ as
\begin{equation}    \label{defwh}
w_h = \sum_{E \in \mathcal{E}^i_h} \frac{h^2}{\bar{\phi}_E}    \psi_E  (t_E \cdot \nabla p_h) t_E  .
\end{equation}
We have indeed $w_h \in \mathcal V_h$, since the pressure tangential derivative $t_E  \cdot \nabla p_h$ is a continuous piecewise polynomial of degree $\leqslant  \AL{k - 2}$ on $\omega (E)$ and $\psi_E$ is a continuous piecewise polynomial of  degree $2$, vanishing outside $\omega (E)$. Note also that $w_h = 0$ on  $\Gamma_w$ since $\mathcal{E}^i_h$ does not contain the edges lying on  $\Gamma_w$.

\AL{Take any $E\in\mathcal{E}_h^i$ and any cell $T \in \omega (E) \cap \mathcal{T}_h^i$.} We shall see that
\begin{equation}\label{phihphiE}
    \int_T \frac{\phi_h}{\bar{\phi}_E} \psi_E  |t_E \cdot \nabla p_h |^2 \ge     c \int_T |t_E \cdot \nabla p_h |^2 \,.
\end{equation}
Here and elsewhere, the constants $c > 0$ depend only  on the polynomial degree $k$, the shape regularity, and the parameters of Assumption \ref{asm1}. \AL{To prove (\ref{phihphiE}), we set $\tilde{\phi}_h= \frac{\phi_h}{\bar{\phi}_E}$ and note that $\tilde{\phi}_h\geqslant 0$ on $T$ since $\phi_h\leqslant 0$ on $T\in\Th^i$. To derive further properties of $\tilde{\phi}_h$ from Assumption \ref{asm1}, we consider 3 following cases with respect to the placement of $T$ and $E$ in the mesh (we recall that $T\in\Th^i$ in any case and $E$ is an edge belonging to $T$).
\begin{description}
\item[Case 1] $T\in\Th^{\Gamma,ext}$. We have then $\bar\phi_E=-h$ so that
\begin{equation}\label{EstCase1}
\tilde{\phi}_h\geqslant 0
\text{  and }h\left|\nabla{\tilde{\phi}_h}\right|\ge \frac{m}{2}\text{ on }T \,.
\end{equation}
\item[Case 2] $T\not\in\Th^{\Gamma,ext}$, $E$ is not shared with any cell from $\Th^{\Gamma,ext}$. Then $\bar\phi_h=\phi_h(x_E)$ for some point $x_E\in E$ and we have for any $x\in T$
$$
\frac{\bar\phi_E}{\phi_h(x)}=
\frac{\phi_h(x_E)}{\phi_h(x)}=
1+\frac{\phi_h(x_E)-\phi_h(x)}{\phi_h(x)}=
1+\frac{\nabla\phi_h(c)\cdot(x_E-x)}{\phi_h(x)}
\le 1+\frac{Mh}{mh}=\frac{m+M}{m}
$$
where $c\in T$ is a point on the segment connecting $x_E$ with $x$. This implies
\begin{equation}\label{EstCase2}
\tilde{\phi}_h \ge \frac{m}{m+M}\text{ on }T \,.
\end{equation}
\item[Case 3] $T\not\in\Th^{\Gamma,ext}$, $E$ is shared with a cell from $\Th^{\Gamma,ext}$. Then $\bar{\phi}_E=-h$ and $\frac{1}{|E|}\int_E\phi_h\le -c_1h$ with some $c_1>0$ depending only on the constants in Assumption \ref{asm1} (since the distance between $E$ and $\Gamma_h$, where $\phi_h$ vanishes, is of order $h$ and $|\nabla\phi_h|$ is bounded away from 0 on $\Th^{\Gamma,ext}$). Combining this with the arguments of the previous case, we arrive at
\begin{equation}\label{EstCase3}
\tilde{\phi}_h \ge \frac{c_1m}{m+M}\text{ on }T \,.
\end{equation}
\end{description}
Moreover, in all of the 3 cases above,
\begin{equation}\label{EstCaseAbove}
|\tilde{\phi}_h|\leqslant C\text{ on }T \,.
\end{equation}
with some $C>0$ depending only on the constants in Assumption \ref{asm1}. In case 1, this follows from the bound $|\nabla\phi_h|\le M$ and the fact that the maximal distance between $T$ and $\Gamma_h$ is of order $h$. In cases 2 and 3, (\ref{EstCaseAbove}) can be proven in the same way as (\ref{EstCase2}) and (\ref{EstCase3}).}

\AL{Inequality (\ref{phihphiE}) can be now proven setting
\begin{equation}\label{MinphihphiE}
c=\min_{T,\tilde\phi_h,q_h} \frac{\int_T {\tilde{\phi}_h} \psi_E  q_h^2}{\int_T q_h^2}
\end{equation}
where the minimum is taken over all the simplexes $T$ permitted by the mesh regularity, all the polynomials $\tilde{\phi}_h:= \frac{\phi_h}{\bar{\phi}_E}$ of degree $k$ satisfying (\ref{EstCaseAbove}) and either of (\ref{EstCase1})--(\ref{EstCase2})--(\ref{EstCase3}), and all the polynomials $q_h:=t_E \cdot \nabla p_h\not= 0$ of degree $k-2$. By homogeneity and rescaling, one can safely assume that $h=1$ and $\|q_h\|_{0,T}=1$. The sets of possible $T$, $\tilde\phi_h$, $q_h$ are bounded and closed, so that the minimum in (\ref{MinphihphiE}) is indeed attained and $c>0$. Indeed, either of (\ref{EstCase1})--(\ref{EstCase2})--(\ref{EstCase3}) excludes the possibility of $\tilde\phi_h$ vanishing everywhere on $T$. This concludes the proof of (\ref{phihphiE}).}

Thanks to (\ref{phihphiE}), we have,  setting $\Omega_h^i=\Omega_h\setminus\Omega_h^\Gamma$ and denoting by $\mathcal{E} (T)$ the set of edges of a cell $T$ excluding the  edges on $\Gamma_w$,
\[
\int_{\Omega_h^i} \phi_h w_h \cdot \nabla p_h = \sum_{E \in     \mathcal{E}_h^i} h^2  \int_{\omega (E) \cap \Omega_h^i}     \frac{\phi_h}{\bar{\phi}_E} \psi_E  |t_E \cdot \nabla p_h |^2 \ge c     \sum_{T \in \mathcal{T}_h^i} {\sum_{E \in \mathcal{E} (T)}}_{} h^2      \int_T |t_E \cdot \nabla p_h |^2 \,.
\]
Taking into account Assumption \ref{asmStokes}, we have by scaling and the equivalence of norms  on all $T \in \mathcal{T}_h^i$  \[ \sum_{E \in \mathcal{E} (T)}  \int_T |t_E \cdot \nabla p_h     |^2 \ge c |p_h |^2_{1, T} \, . \]
Hence,
\begin{equation}    \label{estwhint}
\int_{\Omega_h^i} \phi_h w_h \cdot \nabla p_h \geqslant    ch^2 |p_h |^2_{1, \Omega^i_h} .
\end{equation}

Now, on a mesh cell $T \in \mathcal{T}_h^{\Gamma}$ having common edges with  cells of $\mathcal{T}_h^i$, definition (\ref{defwh}) clearly gives  \[ \left| \int_T \phi_h w_h \cdot \nabla p_h \right| = \left| \sum_{E \in     \mathcal{E} (T) \cap \mathcal{E}_h^i} h   \int_T \phi_h \psi_E |t_E \cdot     \nabla p_h |^2 \right| \leqslant Ch^2 | p_h |^2_{1, T} \]  since $\phi_h$ is of order $h$ on such cells. Combining this with  (\ref{estwhint}) gives  \begin{align}    \notag
ch^2 |p_h |^2_{1, \Omega_h} &\leqslant \int_{\Omega_h}    \phi_h w_h \cdot \nabla p_h - \int_{\Omega_h^{\Gamma}} \phi_h w_h \cdot    \nabla p_h + ch^2 |p_h |^2_{1, \Omega^{\Gamma}_h} \\
&\leqslant \int_{\Omega_h} \phi_h w_h \cdot \nabla p_h + Ch^2 |p_h |^2_{1,     \Omega^{\Gamma}_h} .
\label{estphbelow}
\end{align}
We also have
\begin{multline}    \label{estphabove}
| \phi_h w_h |_{1, \Omega_h}^2 = \int_{\Omega_h} | (\nabla \phi_h) w_h +    \phi_h (\nabla w_h) |^2\\    \le C \sum_{T \in \mathcal{T}_h} \sum_{E \in \mathcal{E} (T) \cap    \mathcal{E}_h^i} h^4  \int_T \left( \frac{| \nabla \phi_h    |^2}{\bar{\phi}_E^2} | \nabla p_h |^2 + \frac{\phi_h^2}{\bar{\phi}_E^2} |    \nabla \psi_E |^2 | \nabla p_h |^2 + \frac{\phi_h^2}{\bar{\phi}_E^2} |    \nabla^2 p_h |^2 \right)\\    \le C \sum_{T \in \mathcal{T}_h} h^2 \int_T  | \nabla p_h |^2 = Ch^2 |p_h    |_{1, \Omega_h}^2 .
\end{multline}
We have used here the finite element inverse estimates on $\phi_h, \psi_E$,  $p_h$, and the uniform upper bound (\ref{EstCaseAbove}) on  $|\tilde{\phi}_h|=\frac{| \phi_h|}{| \bar{\phi}_E |}$.

Redefining $w_h$ as $w_h / c$ with the constant $c$ from (\ref{estphbelow})  shows \eqref{infsup1h} as a combination of (\ref{estphbelow}) and  (\ref{estphabove}).
\end{proof}

\begin{lemma}\label{lemma:magic}
  There exists an $h$-independent constant $C > 0$ such that $\forall p_h \in \mathcal M_h$ $\exists s_h \in \mathcal V_h$
  \begin{equation}    \label{infsup0h}
  \| p_h \|_{0, \Omega_h}^2 - Ch^2 |p_h |^2_{1, \Omega_h^{\Gamma}}
  \le \int_{\Omega_h} \nabla p_h
    \cdot (\phi_h s_h) 
    \quad\tmop{ and }\quad
    | \phi_h s_h |_{1,\Omega_h} \leqslant C \| p_h \|_{0, \Omega_h}.
  \end{equation}
\end{lemma}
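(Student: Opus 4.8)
The plan is to upgrade the $h$-weighted pressure control of Lemma~\ref{lemGuzman} to genuine $L^2$ control by a Verf\"urth-type combination with a velocity coming from the continuous inf--sup (LBB) condition on the \emph{physical} domain $\Omega$. First I would invoke the classical surjectivity of the divergence on the bounded Lipschitz domain $\Omega$ (see e.g. \cite{ern2013theory}): since $p_h\in\mathcal M_h$ has zero mean on $\Omega$ (this is precisely the role of the constraint $\int_\Omega q_h=0$ retained in the definition of $\mathcal M_h$, cf. Remark~\ref{rmq1}), there is $v\in H^1_0(\Omega)^d$ with $\Div v=-p_h$ on $\Omega$ and $\|v\|_{1,\Omega}\le C\|p_h\|_{0,\Omega}$. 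I would then extend $v$ to $\Omega_h$ by a bounded Sobolev extension $E$, so that $Ev\in H^1(\Omega_h)^d\cap H^1_0(\Omega)$; the crucial point is that $v$ vanishes on $\Gamma$ (which is exactly the membership in $H^1_0(\Omega)$), making the $\phi$-FEM interpolation applicable. Applying Lemma~\ref{LemInterp} with $t=1$ then produces $s_h\in\mathcal V_h$ with $|\phi_h s_h|_{1,\Omega_h}\le C\|p_h\|_{0,\Omega}$ and $\|Ev-\phi_h s_h\|_{0,\Omega_h}\le Ch\|p_h\|_{0,\Omega}$.

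Next I would estimate $\int_{\Omega_h}\nabla p_h\cdot\phi_h s_h$ by writing it as $\int_{\Omega_h}\nabla p_h\cdot Ev+\int_{\Omega_h}\nabla p_h\cdot(\phi_h s_h-Ev)$. Splitting the first integral over $\Omega$ and over the fictitious band $B_h=\Omega_h\setminus\Omega$, integration by parts on $\Omega$ (using $v=0$ on $\partial\Omega$ and $\Div v=-p_h$) yields $\int_\Omega\nabla p_h\cdot v=\|p_h\|_{0,\Omega}^2$; on the thin band $B_h\subset\Omega_h^\Gamma$ a Poincar\'e inequality exploiting $Ev|_\Gamma=0$ bounds the remainder by $Ch|p_h|_{1,\Omega_h^\Gamma}\|p_h\|_{0,\Omega}$, and the interpolation remainder is at most $Ch|p_h|_{1,\Omega_h}\|p_h\|_{0,\Omega}$ by Cauchy--Schwarz. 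This gives $\int_{\Omega_h}\nabla p_h\cdot\phi_h s_h\ge\|p_h\|_{0,\Omega}^2-Ch|p_h|_{1,\Omega_h}\|p_h\|_{0,\Omega}-Ch|p_h|_{1,\Omega_h^\Gamma}\|p_h\|_{0,\Omega}$.

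Finally I would set $s_h^{\mathrm{fin}}=s_h+\delta w_h$ with $w_h$ from Lemma~\ref{lemGuzman} and $\delta>0$ fixed large. A Young inequality turns the offending $Ch|p_h|_{1,\Omega_h}\|p_h\|_{0,\Omega}$ into $\tfrac14\|p_h\|^2_{0,\Omega}+Ch^2|p_h|^2_{1,\Omega_h}$, and the positive contribution $\delta h^2|p_h|^2_{1,\Omega_h}$ from Lemma~\ref{lemGuzman} absorbs this $h^2|p_h|^2_{1,\Omega_h}$ for $\delta$ large, leaving only a benign $-Ch^2|p_h|^2_{1,\Omega_h^\Gamma}$; absorbing the last cross term by Young as well produces $\int\nabla p_h\cdot\phi_h s_h^{\mathrm{fin}}\ge c\|p_h\|^2_{0,\Omega}-Ch^2|p_h|^2_{1,\Omega_h^\Gamma}$. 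To replace $\|p_h\|_{0,\Omega}$ by $\|p_h\|_{0,\Omega_h}$ I would use the patches of Assumption~\ref{asm2}: on each patch a local Poincar\'e plus norm equivalence gives $\|p_h\|^2_{0,\Omega_h^\Gamma}\le C(\|p_h\|^2_{0,\Omega}+h^2|p_h|^2_{1,\Omega_h^\Gamma})$, whence $\|p_h\|^2_{0,\Omega_h}\le C\|p_h\|^2_{0,\Omega}+Ch^2|p_h|^2_{1,\Omega_h^\Gamma}$. Rescaling $s_h^{\mathrm{fin}}$ yields the first inequality of \eqref{infsup0h}, while $|\phi_h s_h^{\mathrm{fin}}|_{1,\Omega_h}\le C\|p_h\|_{0,\Omega_h}$ follows from the two individual bounds and the inverse inequality $h|p_h|_{1,\Omega_h}\le C\|p_h\|_{0,\Omega_h}$ on the quasi-uniform mesh.

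The main obstacle is the bookkeeping of the fictitious band $B_h$ together with the absorption step: one must guarantee that $v$ vanishes on $\Gamma$ so that Lemma~\ref{LemInterp} applies, and then arrange that every error term is either of the form $h|p_h|_{1,\Omega_h}\|p_h\|_{0,\Omega}$, controllable by the $\delta w_h$ contribution, or supported on $\Omega_h^\Gamma$ and hence relegable to the tolerated $-Ch^2|p_h|^2_{1,\Omega_h^\Gamma}$ (which the stabilization in the scheme will later dominate, cf. Lemma~\ref{LemDir:prop1 bis}).
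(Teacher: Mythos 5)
Your proposal is correct and follows essentially the same route as the paper's proof: continuous inf-sup on $\Omega$ giving $v\in H^1_0(\Omega)^d$ with $\Div v=-p_h$, extension to $\Omega_h$, the $\phi_h$-weighted interpolation of Lemma \ref{LemInterp} with $t=1$, a corrective multiple of the $w_h$ from Lemma \ref{lemGuzman} combined via Young's inequality, and the patchwise comparison $\|p_h\|_{0,\Omega_h}^2\le C(\|p_h\|_{0,\Omega}^2+h^2|p_h|_{1,\Omega_h^\Gamma}^2)$. The only cosmetic difference is that the paper extends $v$ by zero outside $\Omega$ (legitimate since $v\in H^1_0(\Omega)$), so that $\int_{\Omega_h}\nabla p_h\cdot\tilde v=\int_\Omega\nabla p_h\cdot v$ holds exactly and your band-Poincar\'e estimate on $B_h$ is not needed.
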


\begin{proof}
  Taking $p_h\in \mathcal M_h$.
  By continuous velocity-pressure inf-sup (recall that $\int_\Omega p_h=0$), there exists $v \in H_0^1 (\Omega)$  s.t.
  \begin{equation*}
    \Div v = - p_h
    \tmop{ on } \Omega, \tmop{ and } \| v \|_{1, \Omega} \leqslant C \| p_h
    \|_{0, \Omega}\,.
  \end{equation*}
  Let $\tilde{v} \in H^1 (\Omega_h)$ be the extension of $v$ by 0 outside
  $\Omega$. Lemma \ref{LemInterp} with $t=1$ implies $\exists v_h \in \mathcal V_h $ s.t.
  \[  \| \tilde{v} - \phi_h v_h \|_{0, \Omega_h} \leqslant Ch \| \tilde{v}
     \|_{1, \Omega_h} \leqslant Ch \| p_h \|_{0, \Omega_{_h}}
     \quad\tmop{ and }\quad
     | \phi_h v_h |_{1, \Omega_h} \leqslant C \| \tilde{v} \|_{1,
     \Omega_h} \leqslant C \| p_h \|_{0, \Omega_{_h}}. \]
  Thus,
  \begin{multline}\label{phOmega}
      \| p_h \|_{0, \Omega}^2 = - \int_{\Omega} p_h \Div v
      = \int_{\Omega} \nabla p_h \cdot \tilde v =
     \int_{\Omega_h} \nabla p_h \cdot (\phi_h v_h) + \int_{\Omega_h} \nabla
     p_h \cdot (\tilde{v} - \phi_h v_h) \\
   \leqslant \int_{\Omega_h} \nabla p_h \cdot (\phi_h v_h) + Ch | p_h |_{1,
     \Omega_h} \| p_h \|_{0, \Omega_{_h}} .
  \end{multline}
  We have
    \begin{equation*}
      \|p_h \|_{0, \Omega_{h}}^2 \leqslant C_1(\| p_h \|_{0, \Omega}^2 + h^2| p_h|_{1, \Omega_h^\Gamma}^2). 
    \end{equation*}
\AL{This can be proven by an argument similar to that in Lemma \ref{LemDir:prop1 bis}: one can consider the maximum of $\frac{\|p_h \|_{0, \Pi_{k}}^2} {\| p_h \|_{0,T_k}^2 + h^2| p_h|_{1, \Pi_k^\Gamma}^2}$ taken over all the admissible patches $\Pi_k$, as in Assumption \ref{asm2}, and piecewise polynomials $p_h$, observe that this maximum is attained and takes the value $C_1>0$, and sum up over all the patches covering $\Omega_h^\Gamma$.} \\ 
We can thus pass from the norm on $\Omega$ to that on $\Omega_h$ in (\ref{phOmega}):
  \[
      \| p_h \|_{0, \Omega_h}^2    \leqslant C_1\int_{\Omega_h} \nabla p_h \cdot (\phi_h v_h) + Ch | p_h |_{1,\Omega_h} \| p_h \|_{0, \Omega_h}
      +C_1h^2| p_h|_{1, \Omega_h^\Gamma}^2.
  \]
  Hence, by Young inequality,
  \begin{multline*}
      \frac 12 \| p_h \|_{0, \Omega_h}^2    \leqslant C_1\int_{\Omega_h} \nabla p_h \cdot (\phi_h v_h) + C_2h^2 | p_h |_{1,\Omega_h}^2
      +C_1h^2| p_h|_{1, \Omega_h^\Gamma}^2
      \\
     \leqslant
     C_1\int_{\Omega_h} \nabla p_h \cdot (\phi_h v_h)
     + C_2 \int_{\Omega_h} \nabla p_h \cdot (\phi_h w_h)
      +Ch^2| p_h|_{1, \Omega_h^\Gamma}^2
  \end{multline*}
  with $w_h$ given by Lemma \ref{lemGuzman}. Thus,
  \[ \frac 12 \| p_h \|_{0, \Omega_h}^2 - Ch^2| p_h|_{1, \Omega_h^\Gamma}^2
  \leqslant \int_{\Omega_h} \nabla p_h \cdot   \phi_h (C_1v_h + C_2w_h)
  \]and\[
     | \phi_h (C_1v_h + C_2w_h) |_{1, \Omega_h}
     \leqslant C \| p_h \|_{0, \Omega_{_h}} \]
  since $| \phi_h w_h |_{1, \Omega_h}
  \leqslant Ch |p_h |_{1,\Omega_h}
  \leqslant C \|p_h\|_{0,    \Omega_h}$ by inverse finite element estimates.

  Setting $s_h = 2(C_1v_h + C_2w_h)$  proves (\ref{infsup0h}).
\end{proof}

\subsection{The generalized coercivity (the inf-sup condition) for the bilinear form.}\label{sec:coer}
To ease the forthcoming calculations, let us introduce the finite element space of velocities combining the rigid body motion on the approximate boundary and the contributions involving the level set:
\begin{equation}\label{Vhrbm}
\mathcal V_h^{rbm}=\{\chi_h (V_h + \omega_h \times r) +\phi_h s_h
\text{ with } s_h\in \mathcal V_h, V_h\in\mathbb{R}^d,  \omega_h\in\mathbb{R}^{d'} \}.
\end{equation}
In the scheme \eqref{sch}, we shall now combine the test functions $s_h,V_h,\omega_h$ into $v_h\in \mathcal V_h^{rbm}$ as in the definition above. Similarly, we shall combine the trial functions $w_h,U_h,\psi_h$ into $u_h\in \mathcal V_h^{rbm}$  setting $u_h=\chi_h (U_h + \psi_h \times r) +\phi_h w_h)$. Scheme \eqref{sch} can be then rewritten in the compact form: find $u_h\in \mathcal V_h^{rbm}$ and $p_h\in \mathcal M_h$ such that
\begin{equation}\label{schCmpct}
	c_h (u_h, p_h ; v_h, q_h)=L_h(v_h, q_h), \quad \forall v_h\in \mathcal V_h^{rbm},\ q_h\in \mathcal M_h\,,
\end{equation}
where the bilinear form $c_h$ is given by
\begin{multline*}
  c_h (u_h, p_h ; v_h, q_h)
    = 2\int_{\Omega_h} D(u_h) : D(v_h)
    - \int_{G_h} (2D(u_h) - p_hI)n \cdot \phi_h s_h \\
   - \int_{\Omega_h} q_h \Div u_h
  - \int_{\Omega_h} p_h \Div v_h \\
   + \sigma h^2  \sum_{T \in \mathcal{T}_h^{\Gamma}} \int_T (- \Delta u_h + \nabla p_h) \cdot     (- \Delta v_h - \nabla q_h)
  + \sigma \sum_{T \in \mathcal{T}_h^{\Gamma}} \int_T (\Div u_h) (\Div v_h) \\
   + \sigma_u h \sum_{E \in \mathcal{F}_h^{\Gamma}} \int_E  \left[
\partial_n u_h \right] \cdot \left[
\partial_n v_h \right]
+ \sigma_u h^3 \sum_{E \in \mathcal{F}_h^{\Gamma}} \int_E \left[
\partial_n^2 u_h \right] \cdot \left[
\partial_n^2 v_h \right]
\end{multline*}
and the linear form $L_h$ is given by
\begin{multline}\label{defLh}
L_h(v_h, q_h) = \int_{\Omega_h} \rho_f g \cdot \phi_h s_h + \int_{\mathcal{O}} \rho_f g \cdot \chi_h (V_h + \omega_h \times r)  + \left( 1 -
\frac{\rho_f}{\rho_s} \right) mg \cdot V_h \\
+ \sigma h^2  \sum_{T \in \mathcal{T}_h^{\Gamma}} \int_T \rho_f g \cdot (-\Delta v_h- \nabla q_h) .
\end{multline}
In both expressions above, $s_h$, $V_h$, and $\omega_h$ are related to $v_h\in \mathcal V_h^{rbm}$ as in (\ref{Vhrbm}).

\begin{lemma}
  \label{lemma:coer}
  Introduce the norm on \ $\mathcal V_h^{rbm} \times \mathcal M_h$
  \[ \interleave v_h, q_h \interleave_h \assign
   \left( |v_h|_{1, \Omega_h}^2 + \| q_h \|_{0, \Omega_h}^2  + h^2
     \sum_{T \in \mathcal{T}_h^{\Gamma}} \| - \Delta v_h + \nabla q_h
     \|_{0, T}^2 + J_u (v_h,v_h) \right)^{1/2}. \]
   The following inf-sup condition holds \AL{provided $\sigma$ and $\sigma_u$ are
   	sufficiently large}:
  \[ \forall(u_h, p_h) \in \mathcal V_h^{rbm}  \times \mathcal M_h \quad \exists (v_h, q_h) \in
     \mathcal V_h^{rbm} \times \mathcal M_h  \]
  such that
  \begin{equation}\label{infsupch}
   \frac{c_h (u_h, p_h ; v_h, q_h)}{\interleave v_h, q_h \interleave_h} \geqslant \theta \interleave u_h, p_h     \interleave_h
  \end{equation}
  with a constant $\theta > 0$ depending only on the mesh regularity.
\end{lemma}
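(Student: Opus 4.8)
\noindent
The plan is to prove (\ref{infsupch}) by the classical device of building, for a given $(u_h,p_h)\in\mathcal V_h^{rbm}\times\mathcal M_h$, an explicit test pair
\[
(v_h,q_h)=(u_h+\delta\,\phi_h s_h,\,-p_h),
\]
where $s_h\in\mathcal V_h$ is the function associated with $p_h$ by Lemma~\ref{lemma:magic} (so that $\phi_h s_h$ "sees" the pressure through $\int_{\Omega_h}\nabla p_h\cdot\phi_h s_h$), and $\delta>0$ is a small parameter fixed at the very end. First I would test with $(u_h,-p_h)$: the two mixed terms $-\int_{\Omega_h}q_h\Div u_h-\int_{\Omega_h}p_h\Div v_h$ cancel, $-\nabla q_h=\nabla p_h$, and
\[
c_h(u_h,p_h;u_h,-p_h)=2\|D(u_h)\|_{0,\Omega_h}^2+\sigma h^2\!\!\sum_{T\in\Th^\Gamma}\!\!\|-\Delta u_h+\nabla p_h\|_{0,T}^2+\sigma\!\!\sum_{T\in\Th^\Gamma}\!\!\|\Div u_h\|_{0,T}^2+\sigma_u J_u(u_h,u_h)-R,
\]
where $R=\int_{G_h}(2D(u_h)-p_hI)n\cdot\phi_h w_h$ and $w_h$ is the $\mathcal V_h$-component of $u_h$. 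All summands except $R$ are nonnegative and reproduce, via the Korn inequality of Lemma~\ref{LemKorn}, the quantities $|u_h|_{1,\Omega_h}^2$, the least-squares part of $\interleave u_h,p_h\interleave_h^2$, and $J_u(u_h,u_h)$.

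Testing with $\delta(\phi_h s_h,0)$ then supplies the missing pressure control: integrating $-\int_{\Omega_h}p_h\Div(\phi_h s_h)$ by parts (the $\Gamma_w$ boundary term vanishes since $\phi_h s_h=0$ there) yields $\int_{\Omega_h}\nabla p_h\cdot\phi_h s_h$ plus a flux on $G_h$, and by Lemma~\ref{lemma:magic} the bulk part is bounded below by $\|p_h\|_{0,\Omega_h}^2-Ch^2|p_h|_{1,\Omega_h^\Gamma}^2$. The remaining terms created by this test function ($2\int_{\Omega_h}D(u_h):D(\phi_h s_h)$, the stabilization cross-terms, and the $G_h$ flux for $s_h$) are all bounded by $\|D(u_h)\|_{0,\Omega_h}$, the stabilization of $u_h$, and $\|p_h\|_{0,\Omega_h}$, using $|\phi_h s_h|_{1,\Omega_h}\le C\|p_h\|_{0,\Omega_h}$; crucially each carries the factor $\delta$, so after a Young inequality that spends a fixed fraction of the velocity coercivity their net contribution to the pressure is $O(\delta^2)$ and is dominated by the $O(\delta)$ gain $\delta\|p_h\|_{0,\Omega_h}^2$ for $\delta$ small.

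The genuine obstacle is the non-sign-definite $G_h$ flux $R$, which couples the solution to its fictitious extension across the band $\Omega_h^\Gamma$ and already appears at $O(1)$ in the $(u_h,-p_h)$ test, with no $\delta$ to help. I would estimate $|R|\le\|2D(u_h)-p_hI\|_{0,G_h}\,\|\phi_h w_h\|_{0,G_h}$, invoking Lemma~\ref{LemDir:prop2} for $\|\phi_h w_h\|_{0,G_h}\le C\sqrt h\,\|D(u_h)\|_{0,\Omega_h}$ and a trace/inverse inequality on the cut cells adjacent to $G_h$ (which all lie in $\Th^\Gamma$) for $\|2D(u_h)-p_hI\|_{0,G_h}\le Ch^{-1/2}(\|D(u_h)\|_{0,\Omega_h^\Gamma}+\|p_h\|_{0,\Omega_h^\Gamma})$, whence $|R|\le C(\|D(u_h)\|_{0,\Omega_h^\Gamma}+\|p_h\|_{0,\Omega_h^\Gamma})\|D(u_h)\|_{0,\Omega_h}$. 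The band strain $\|D(u_h)\|_{0,\Omega_h^\Gamma}$ and the band pressure gradient $h|p_h|_{1,\Omega_h^\Gamma}$ are then absorbed through Lemma~\ref{LemDir:prop1 bis}, which returns them as a fraction $\alpha<1$ of $\|D(u_h)\|_{0,\Omega_h}^2$ plus stabilization; this is precisely where the least-squares and the second-order ghost penalty are indispensable. The delicate point is the band pressure $L^2$ norm $\|p_h\|_{0,\Omega_h^\Gamma}$, controlled by no stabilization: I would reduce it, by the patch argument underlying Lemma~\ref{lemma:magic} (giving $\|p_h\|_{0,\Omega_h^\Gamma}^2\le C(\|p_h\|_{0,\Omega}^2+h^2|p_h|_{1,\Omega_h^\Gamma}^2)$), to the interior pressure plus $h^2|p_h|_{1,\Omega_h^\Gamma}^2$, and absorb the interior pressure into the $\delta\|p_h\|_{0,\Omega_h}^2$ gain.

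Finally I would fix the constants in the order dictated by these estimates: $\beta$ large in Lemma~\ref{LemDir:prop1 bis} so that $\alpha$ is small enough to preserve the velocity coercivity; then $\delta$ small, yet large enough to swallow the $O(1)$ band-pressure part of $R$; then $\sigma,\sigma_u$ large enough that the stabilization dominates everything routed through Lemma~\ref{LemDir:prop1 bis}. This gives $c_h(u_h,p_h;v_h,q_h)\ge c\,\interleave u_h,p_h\interleave_h^2$, and (\ref{infsupch}) then follows from the continuity bound $\interleave v_h,q_h\interleave_h\le C\interleave u_h,p_h\interleave_h$, immediate from $|\phi_h s_h|_{1,\Omega_h}\le C\|p_h\|_{0,\Omega_h}$, inverse inequalities, and one further use of Lemma~\ref{LemDir:prop1 bis} to bound the $h^2\sum_{T}\|-\Delta v_h+\nabla q_h\|_{0,T}^2$ part of the target norm. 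I expect reconciling the two competing requirements on $\delta$ — small enough for the $\delta\,\phi_h s_h$ terms, but not so small that the $O(1)$ pressure part of $R$ is left uncovered — i.e. checking that the geometric constants entering these two bounds are mutually compatible, to be the subtlest bookkeeping of the argument.
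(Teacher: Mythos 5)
Your overall architecture (test pair $(u_h+\delta\,\phi_h s_h,\,-p_h)$ with $s_h$ from Lemma~\ref{lemma:magic}; velocity step, pressure step, combination with a small weight) coincides with the paper's proof, but your treatment of the $G_h$ flux $R=\int_{G_h}(2D(u_h)-p_hI)n\cdot\phi_h w_h$ --- which you correctly identify as the genuine obstacle --- contains a gap that the rest of the argument cannot repair. Estimating $|R|\le C(\|D(u_h)\|_{0,\Omega_h^{\Gamma}}+\|p_h\|_{0,\Omega_h^{\Gamma}})\|D(u_h)\|_{0,\Omega_h}$ by Cauchy--Schwarz and trace/inverse inequalities produces a fixed $O(1)$ constant $C$ over which you have no control. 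To absorb the band strain you then propose to invoke Lemma~\ref{LemDir:prop1 bis} with ``$\beta$ large so that $\alpha$ is small'', but the lemma does not deliver this: for a globally linear divergence-free field (say $v_h=(x,-y)$, $q_h=0$) every term in the stabilization functional $G$ vanishes, so the maximized quotient is at least $|\Pi_k^\Gamma|/|\Pi_k|$ for \emph{every} $\beta$; hence $\alpha$ is bounded below away from $0$ uniformly in $\beta$, and only $\alpha<1$ is available. After Young, your band-strain contribution becomes of size $\frac{C^2\alpha}{2\epsilon}\|D(u_h)\|^2_{0,\Omega_h}$ against a coercivity budget of exactly $2\|D(u_h)\|^2_{0,\Omega_h}$, which fails once $C$ is large. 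The band pressure is worse: absorbing $\frac{C^2}{2\mu}\|p_h\|^2_{0,\Omega_h^\Gamma}$ into the gain $\frac{\delta}{2}\|p_h\|^2_{0,\Omega_h}$ forces $\mu\gtrsim C^2/\delta$, whence a velocity loss of order $\frac{C^2}{\delta}\|D(u_h)\|^2_{0,\Omega_h}$ that must stay below the fixed margin $2(1-\alpha)$; this is a \emph{lower} bound $\delta\gtrsim C^2/(1-\alpha)$ directly contradicting the upper bounds on $\delta$ you need for the $O(\delta^2)$ cross terms and the $\sigma-C\delta\sigma^2$-type constraints. What you call the ``subtlest bookkeeping'' is thus an actual incompatibility of constants, not a tuning issue.

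The paper resolves this structurally rather than by norm estimates: since $\phi_h w_h=0$ on $\Gamma_h=\{\phi_h=0\}$ and $u_h-\phi_h w_h$ is a rigid motion on the strip $\tilde B_h=\{\phi_h>0\}\cap\Omega_h$, the divergence theorem applied cell by cell over $\tilde B_h$ rewrites $R$ as $2\int_{\tilde B_h}|D(u_h)|^2$ plus band volume and facet terms paired with $\phi_h w_h$, plus $\int_{\tilde B_h}p_h\Div u_h$. The leading piece then carries the \emph{sharp} constant $2$, exactly matching the coercivity constant $2\|D(u_h)\|^2_{0,\Omega_h}$, so Lemma~\ref{LemDir:prop1 bis} suffices with any $\alpha<1$ (and as a bonus returns the positive term $h^2|p_h|^2_{1,\Omega_h^\Gamma}$ needed to neutralize the $-Ch^2|p_h|^2_{1,\Omega_h^\Gamma}$ from Lemma~\ref{lemma:magic}). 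The paired terms come with explicit $h$-weights matching the bounds of Lemma~\ref{LemDir:prop2} and are absorbed into the least-squares and ghost-penalty terms by taking $\sigma,\sigma_u$ large; and, crucially, the band pressure now multiplies $\Div u_h$ --- a quantity that is itself $\sigma$-stabilized --- so its Young weight $\varepsilon_3$ can be taken as small as $\lambda/2$ at the mere price of enlarging $\sigma$, eliminating the deadlock on $\delta$. To salvage your route you would have to reproduce this exact-constant decomposition of $R$; no trace-inequality shortcut will do.
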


\begin{proof}
 Let us take $p_h \in \mathcal M_h$ and $u_h=\chi_h (U_h + \psi_h \times r) +\phi_hw_h\in \mathcal V_h^{rbm}$ with  $w_h \in \mathcal V_h$, $U_h\in\mathbb{R}^d$ and $\psi_h\in\mathbb{R}^{d'}$.

\noindent\textbf{Step 1: controlling the velocity.}
By choosing $(u_h,p_h)$ as the trial function and $(u_h,-p_h)$ as the test function in the bilinear form $c_h$, we obtained:
  \begin{multline}   \label{chuhuh}
    c_h (u_h, p_h ; u_h, - p_h) = 2\int_{\Omega_h} | D( u_h) |^2 - \int_{G_h} (2D(u_h) - p_h I) n \cdot \phi_h w_h \\
   + \sigma_u h \sum_{E \in \mathcal{F}_h^{\Gamma}} \int_E \left| \left[
     \partial_n u_h \right] \right|^2
     + \sigma_u h^3 \sum_{E \in \mathcal{F}_h^{\Gamma}} \int_E \left| \left[
     \partial_n^2 u_h \right] \right|^2
     \\
     + \sigma h^2 \sum_{T \in \mathcal{T}_h^{\Gamma}} \int_T | - \Delta u_h + \nabla
     p_h |^2 + \sigma \sum_{T \in \mathcal{T}_h^{\Gamma}} \int_T | \Div u_h |^2  .
  \end{multline}

  Let $\tilde{B}_h$ be the strip between $\Gamma_h=\{\phi_h=0\}$ and $G_h$,
  \tmtextit{i.e.} $\tilde{B}_h = \{\phi_h > 0\} \cap \Omega_h$. Since $\phi_h w_h = 0$
  on $\Gamma_h$,
  \begin{multline*}
    \int_{G_h} D(u_h) n \cdot \phi_h w_h =
    \int_{\partial \tilde{B}_h} D(u_h) n \cdot \phi_h w_h\\
    = \sum_{T \in \mathcal{T}_h^{\Gamma}} \int_{\partial (\tilde{B}_h \cap T)}
    D(u_h) n_T \cdot \phi_h w_h - \sum_{T \in \mathcal{T}_h^{\Gamma}}
    \sum_{E \in \mathcal{F}_h^{cut} (T)} \int_{\tilde{B}_h \cap E} D(u_h) n_T \cdot \phi_h w_h,
  \end{multline*}
  where $\mathcal{T}_h^{\Gamma}$ is defined in \eqref{eq:def ThGamma},  $\mathcal{F}_h^{cut} (T)$ regroups the facets of a mesh element $T$ cut by  $\Gamma_h$, and $n_T$ is the unit normal pointing outside of $T$ on the boundary of a mesh cell $T$. Applying the divergence theorem to the integrals on ${\partial (\tilde{B}_h \cap T)}$ and regrouping the integrals on the facets gives
  \begin{multline*}
    \int_{G_h} D(u_h) n \cdot \phi_h w_h =
    \int_{\tilde{B}_h}  D(u_h) : D(\phi_h w_h) + \sum_{T \in \mathcal{T}_h^{\Gamma}}
    \int_{\tilde{B}_h \cap T} \Div D(u_h) \cdot \phi_h w_h \\- \sum_{E \in
    \mathcal{F}_h^{\Gamma}} \int_{E \cap \tilde{B}_h} [D(u_h) n] \cdot
    \phi_h w_h \\
=\int_{\tilde{B}_h} | D(u_h) |^2 + \sum_{T \in \mathcal{T}_h^{\Gamma}}
\int_{\tilde{B}_h \cap T} \Div D(u_h) \cdot \phi_h w_h - \sum_{E \in
	\mathcal{F}_h^{\Gamma}} \int_{E \cap \tilde{B}_h} [D(u_h) n] \cdot
\phi_h w_h,
  \end{multline*}
  since $u_h-\phi_h w_h$ is the velocity of a rigid motion on $\tilde{B}_h$.

  Similarly (and simpler)
  \[
    \int_{G_h} p_h n \cdot \phi_h w_h = \int_{\partial \tilde{B}_h} p_h
    n \cdot \phi_h w_h = \int_{\tilde{B}_h} p_h \Div u_h + \int_{\tilde{B}_h}
    \nabla p_h \cdot \phi_h w_h.
  \]

  Substituting this into (\ref{chuhuh}) and rewriting $2\Div D(u_h) - \nabla p_h=\Delta u_h - \nabla p_h +\nabla\Div u_h$ on the cells $T\in\Th^\Gamma$ yields
  \begin{multline*}
    c_h (u_h, p_h ; u_h, - p_h) = 2\int_{\Omega_h} | D(u_h) |^2 -
    2\int_{\tilde{B}_h} | D(u_h) |^2
    - \underbrace{\sum_{T \in \mathcal{T}_h^{\Gamma}}  \int_{\tilde{B}_h \cap T} (\Delta u_h - \nabla p_h) \cdot \phi_h w_h}_{\text{Young with $\varepsilon_1$}} \\
    + \underbrace{2\sum_{F \in \mathcal{F}_h^{\Gamma}} \int_{F \cap \tilde{B}_h} [D(u_h) n] \cdot \phi_h w_h}_{\text{Young with $\varepsilon_2$}}
    + \underbrace{\int_{\tilde{B}_h} p_h \Div u_h}_{\text{Young with $\varepsilon_3$}}
    - \underbrace{\sum_{T \in \mathcal{T}_h^{\Gamma}} \int_{\tilde{B}_h \cap T}(\nabla\Div u_h) \cdot \phi_h w_h}_{\text{Young with $\varepsilon_4$}}
    \\
 + \sigma_u h \sum_{E \in \mathcal{F}_h^{\Gamma}}\int_E |\left[\partial_n u_h \right]|^2
 + \sigma_u h^3 \sum_{E \in \mathcal{F}_h^{\Gamma}} \int_E \left| \left[\partial_n^2 u_h \right] \right|^2
          \\
      + \sigma h^2  \sum_{T \in \mathcal{T}_h^{\Gamma}} \int_T | \Delta u_h - \nabla p_h |^2 + \sigma
     \sum_{T \in \mathcal{T}_h^{\Gamma}} \int_T | \Div u_h |^2
   .       \end{multline*}
  \AL{Several terms above are marked with ``Young with $\varepsilon_i$" meaning that we are going to apply the Young inequality with some weights $\varepsilon_1,\ldots,\varepsilon_4>0$ (multiplied by the appropriate powers of $h$) to these terms. We recall that  Lemma \ref{LemDir:prop2} implies
  	$$
  	\sum_{T \in \mathcal{T}_h^{\Gamma}}  \frac{1}{h^2}\|\phi_h w_h\|_{\tilde{B}_h \cap T}^2 \le C\|D(u_h)\|_{0,\Omega_h}^2 \,, \quad
  	\sum_{F \in \mathcal{F}_h^{\Gamma}} \frac{1}{h}\|\phi_h w_h\|_{F \cap \tilde{B}_h}^2 \le C\|D(u_h)\|_{0,\Omega_h}^2 \,,
  	$$
  	which allows us to absorb the norms of $\phi_hw_h$ into the first term with $\| D(u_h)\|_{0, \Omega_h}$. We also use the inverse inequality $h\| \nabla\Div u_h \|_{0, \Omega_h^{\Gamma}}\le C\| \Div u_h \|_{0, \Omega_h^{\Gamma}}$. This yields}
  \begin{multline*} c_h (u_h, p_h ; u_h, - p_h) \geqslant
  \left( 2 - C   {\frac{\varepsilon_1+\varepsilon_2+\varepsilon_4}{2} }\right) \| D(u_h)\|_{0, \Omega_h}^2 - 2\| D(u_h) \|_{0, \Omega^{\Gamma}_h}^2 -
     \frac{\varepsilon_3}{2}  \| p_h \|_{0, \Omega_h^{\Gamma}} \\
   + h^2  \left( \sigma - \frac{1}{2 \varepsilon_1} \right) \sum_{T \in
     \mathcal{T}_h^{\Gamma}} \| \Delta u_h - \nabla p_h \|_{0,T}^2
 + \left( \sigma - \frac{1}{2 \varepsilon_3} - \frac{C}{\varepsilon_4}
 \right)  \| \Div u_h \|_{0, \Omega_h^{\Gamma}}^2\\
 + h     \left( \sigma_u - \frac{1}{2 \varepsilon_2} \right)  \sum_{E \in
     \mathcal{F}_h^{\Gamma}} \|\left[ \partial_n u_h \right]\|_{0,E}^2
     + \sigma_u h^3 \sum_{E \in \mathcal{F}_h^{\Gamma}} \left\| \left[
     \partial_n^2 u_h \right] \right\|_{0,E}^2
  \end{multline*}
 Thanks to Lemma \ref{LemDir:prop1 bis}, this can be further bounded as
  \begin{multline}\label{cboundVel}
  c_h (u_h, p_h ; u_h, - p_h) \geqslant \left( 2(1-\alpha) - C
  {\frac{\varepsilon_1+\varepsilon_2+\varepsilon_4}{2} }  \right)
\| D(u_h)\|_{0, \Omega_h}^2
+2(1-\alpha)h^2| p_h |_{1, \Omega_h^{\Gamma}}\\
-\frac{\varepsilon_3}{2}  \| p_h \|_{0, \Omega_h^{\Gamma}}
+ h^2  \left( \sigma - \frac{1}{2 \varepsilon_1} -2\beta\right) \sum_{T \in
	\mathcal{T}_h^{\Gamma}} \| \Delta u_h - \nabla p_h \|_{0,T}^2\\
+ \left( \sigma - \frac{1}{2 \varepsilon_3} - \frac{C}{\varepsilon_4} -2\beta
\right)  \| \Div u_h \|_{0, \Omega_h^{\Gamma}}^2
+ h     \left( \sigma_u - \frac{1}{2 \varepsilon_2} -2\beta\right)  \sum_{E \in
	\mathcal{F}_h^{\Gamma}} \|\left[ \partial_n u_h \right]\|_{0,E}^2\\
+ h^3(\sigma_u-2\beta)  \sum_{E \in \mathcal{F}_h^{\Gamma}} \left\| \left[
\partial_n^2 u_h \right] \right\|_{0,E}^2
\end{multline}
with some $\beta>0$ and $\alpha\in(0,1)$.

\noindent\textbf{Step 2: controlling the pressure.}
  Let now $s_h\in \mathcal V_h$ be the function given by Lemma \ref{lemma:magic} and set $v_h^p=\phi_h s_h$. Noting that
  \[ - \int_{\Omega_h} p_h \Div v_h^p + \int_{G_h} p_h n
     \cdot v_h^p = \int_{\Omega_h} \nabla p_h \cdot v_h^p
     \geqslant \| p_h \|_{0, \Omega_h}^2 - Ch^2 |p_h |^2_{1, \Omega_h^{\Gamma}} \]
  we get
  \begin{multline*}
  c_h (u_h, p_h ; v_h^p,0)
  \geqslant \| p_h \|_{0, \Omega_h}^2 - Ch^2 |p_h |^2_{1, \Omega_h^{\Gamma}}  + 2\int_{\Omega_h} D(u_h) : D(v_h^p)
 - 2\int_{G_h} D(u_h)n \cdot v_h^p \\
  + \sigma h^2  \sum_{T \in \mathcal{T}_h^{\Gamma}}
  \int_T (\Delta u_h - \nabla p_h) \cdot \Delta v_h^p
+ \sigma \sum_{T \in \mathcal{T}_h^{\Gamma}} \int_T (\Div u_h) (\Div v_h^p) \\
+ \sigma_u h \sum_{E \in \mathcal{F}_h^{\Gamma}} \int_E  \left[
\partial_n u_h \right] \cdot \left[
\partial_n v_h^p \right]
+ \sigma_u h^3 \sum_{E \in \mathcal{F}_h^{\Gamma}} \int_E \left[
\partial_n^2 u_h \right] \cdot \left[
\partial_n^2 v_h^p \right]  .
\end{multline*}
Recalling that $|v_h^p|_{1,\Omega_h}\leqslant C\|p_h\|_{0,\Omega_h}$, remarking that $\|v_h^p\|_{0,G_h}\le\frac{C}{\sqrt{h}}|v_h^p|_{1,\Omega_h^\Gamma}$, and applying Young and inverse inequalities allows us to conclude
  \begin{multline}\label{cboundPres}
c_h (u_h, p_h ; v_h^p,0)
\geqslant \frac{1}{2} \| p_h \|_{0, \Omega_h}^2 - Ch^2 |p_h |^2_{1, \Omega_h^{\Gamma}} - C \| D(u_h) \|_{0, \Omega_h}^2 \\
- C  \left(  \sigma^2h^2  \sum_{T \in \mathcal{T}_h^{\Gamma}} \| - \Delta u_h + \nabla p_h \|_{0, T}^2 + \sigma^2\sum_{T \in \mathcal{T}_h^{\Gamma}}\| \Div u_h \|_{0,T}^2
\right.\\
     \left.
     \sigma_u^2h \sum_{E \in \mathcal{F}_h^{\Gamma}}
     \left\| \left[ \partial_n u_h \right]
     \right\|_{0, E}^2 +\sigma_u^2h^3 \sum_{E \in \mathcal{F}_h^{\Gamma}}
     \left\| \left[ \partial_n^2 u_h \right]
     \right\|_{0, E}^2     \right) .
\end{multline}

\noindent\textbf{Step 3: combining the estimates.}
Multiply (\ref{cboundPres}) by $\lambda>0$ and add it to (\ref{cboundVel}). This gives
  \begin{multline*}
c_h (u_h, p_h ; u_h+\lambda v_h^p, - p_h) \geqslant \left( 2(1-\alpha) - C
\frac{\varepsilon_1+\varepsilon_2+\varepsilon_4}{2}  -C\lambda\right) \| D(u_h)\|_{0, \Omega_h}^2  \\
+\frac{\lambda-\varepsilon_3}{2}  \| p_h \|_{0, \Omega_h}^2
  + (2(1-\alpha)- C\lambda)h^2 |p_h |^2_{1, \Omega_h^{\Gamma}} \\
+ h^2  \left( \sigma - \frac{1}{2 \varepsilon_1} -2\beta -C\lambda\sigma^2\right) \sum_{T \in
	\mathcal{T}_h^{\Gamma}} \| \Delta u_h - \nabla p_h \|_{0,T}^2\\
+ \left( \sigma - \frac{1}{2 \varepsilon_3} - \frac{C}{\varepsilon_4} -2\beta -C\lambda\sigma^2
\right)  \| \Div u_h \|_{0, \Omega_h^{\Gamma}}^2\\
+ h     \left( \sigma_u - \frac{1}{2 \varepsilon_2} -2\beta -C\lambda\sigma_u^2\right)  \sum_{E \in
	\mathcal{F}_h^{\Gamma}} \|\left[ \partial_n u_h \right]\|_{0,E}^2
+ (\sigma_u-2\beta - C\lambda\sigma_u^2) h^3 \sum_{E \in \mathcal{F}_h^{\Gamma}} \left\| \left[
\partial_n^2 u_h \right] \right\|_{0,E}^2  .
\end{multline*}
Taking $\lambda,\varepsilon_1,\varepsilon_2,\varepsilon_4$ small enough, $\varepsilon_3 =  \lambda/2$ and $\sigma,\sigma_u$ big enough, and recalling Korn inequality \eqref{Korn}, this amounts to
  \[ c_h (u_h, p_h ; w_h + \lambda v_h^p, -p_h) \geqslant  c \interleave u_h, p_h
     \interleave_h^2 \]
  with some $c>0$. We  also have easily
  \[ \interleave w_h + \lambda v_h^p, -p_h \interleave_h \leqslant C \interleave
     w_h, p_h \interleave_h, \]
  hence the inf-sup estimate (\ref{infsupch}) with $v_h=u_h + \lambda v_h^p$, $q_h=-p_h$.
\end{proof}

\subsection{\textit{A priori} error estimates.}\label{sec:a priori}

In this section, we will prove Theorem \ref{th1} following the argumentation of \cite{phifem}, which is ameliorated since we require only the optimal regularity $H^{k+1}(\Omega)^d \times H^{k}(\Omega)$ for the velocity-pressure pair $(u,p)$ given by \eqref{eq:1a}-\eqref{intp0}.

\textbf{Proof of the $H^1$ a priori error estimate \eqref{H1err}:}
Let $(u,p)\in H^{k+1}(\Omega)^d \times H^{k}(\Omega)$ with $u=U+\psi\times r$ on $\Gamma$ be the solution to the continuous problem \eqref{eq:1a}-\eqref{intp0} and $(u_h,p_h)\in \mathcal V_h^{rbm}\times \mathcal M_h$ with $u_h=\phi_hw_h+\chi_h(U_h+\psi_h\times r)$ be the solution to the discrete problem \eqref{sch}.
Choose sufficiently smooth extension $\tilde u$ and $\tilde p$  of $u$ and $p$ on $\Omega_h$ such that $\tilde u=u$, $\tilde p=p$ on $\Omega$, and
$$\|\tilde u\|_{k+1,\Omega_h}\leqslant C\|u\|_{k+1,\Omega}, \quad
\|\tilde p\|_{k,\Omega_h}\leqslant C\|p\|_{k,\Omega}.$$
Applying Lemma \ref{LemInterp} to $\tilde u-\chi(U+\psi\times r)$, which vanishes on $\Gamma$ and on $\Gamma_w$, we see that there exists $\tilde w_h\in \mathcal V_h$ such that
\[\|\tilde u-\chi(U+\psi\times r)-\phi_h\tilde w_h\|_{1,\Omega_h}\leqslant Ch^k\|\tilde u-\chi(U+\psi\times r)\|_{k+1,\Omega_h}.\]
This allows us to introduce $\tilde u_h=\phi_h\tilde w_h+\chi_h(U+\psi\times r)\in \mathcal V_h^{rbm}$ satisfying
\begin{eqnarray}\notag
\|\tilde u-\tilde u_h\|_{1,\Omega_h}
&\leqslant&\|\tilde u-\chi(U+\psi\times r)-\phi_h\tilde w_h\|_{1,\Omega_h}+\|(\chi-\chi_h)(U+\psi\times r)\|_{1,\Omega_h}\\
\notag
&\leqslant& Ch^k(\|\tilde u\|_{k+1,\Omega_h}+\|\chi\|_{k+1,\Omega_h}(|U|+|\psi|))\\
&\leqslant& Ch^k\|\tilde u\|_{k+1,\Omega_h}  \leqslant Ch^k\|u\|_{k+1,\Omega},
\label{InterpU}
\end{eqnarray}
thanks to the standard interpolation of $\chi\in H^{k+1}(\Omega_h)$ and to the bounds $|U|,|\psi|\le C\|u\|_{1,\Omega}$ valid by the trace inequality (recall that $u=U+\psi\times r$ on $\Gamma$).

Similarly, $\|\tilde u-\tilde u_h\|_{0,\Omega_h}\leqslant Ch^{k+1}\|u\|_{k+1,\Omega}$.
We define moreover $\tilde p_h\in\mathcal{M}_h$ by the standard FE nodal interpolation  $\tilde p_h=I_h\tilde p$ such that
\begin{equation}\label{InterpP}
\|\tilde p-\tilde p_h\|_{0,\Omega_h}\leqslant Ch^{k}\|p\|_{k,\Omega}\,.
\end{equation}

Thanks to  Lemma \ref{lemma:coer}, $ \exists (v_h, q_h) \in
     \mathcal V_h^{rbm} \times \mathcal M_h$  such that
  \begin{equation}\label{Cea1}
     \interleave \tilde u_h-u_h, \tilde p_h-p_h  \interleave_h \leq C \dfrac{c_h (\tilde u_h - u_h, \tilde p_h - p_h ; v_h, q_h)}{\interleave v_h, q_h \interleave_h}.
  \end{equation}
We should now substitute $(\tilde{u}, \tilde{p})$ into the form $c_{h.}$
To this end, we introduce the fictitious right-hand sides $\tilde{F}$ and $\tilde{Q}$ on $\Omega_h$ so that
\[ - 2 \Div D (\tilde{u}) + \nabla \tilde{p} = \tilde{F} \text{ and }\Div \tilde{u} = \tilde{Q} \tmop{ in } \Omega_h . \]
We observe then, taking any $v_h=\phi_hs_h+\chi_h(V_h+\omega_h\times r)\in \mathcal V_h^{rbm}$, $q_h\in \mathcal M_h$,
\[
2 \int_{\Omega_h} D (\tilde{u }) : D (v_h) - \int_{\Omega_h}
\tilde{p}  \Div v_h - \int_{G_h} (2 D (\tilde{u }) - \tilde{p} I) n
\cdot v_h = \int_{\Omega_h} \tilde{F} \cdot v_h
\]
and, recalling $B_h = \Omega_h \setminus \Omega$,
\begin{multline*}
\int_{G_h} (2 D (\tilde{u} ) - \tilde{p}  I) n \cdot (V_h +
\omega_h \times r) = \int_{\Gamma} (2 D (\tilde{u}) - p  I) n \cdot (V_h + \omega_h
\times r)\\ + \int_{B_h} \tmop{div} (2 D (\tilde{u} ) - \tilde{p}  I)
n \cdot (V_h + \omega_h \times r)
= m g \cdot V_h - \int_{B_h} \tilde{F}  \cdot (V_h + \omega_h \times
r).
\end{multline*}
Hence,
\begin{multline*}
    c_h  (\tilde{u} , \tilde{p}  ; v_h, q_h)
= \int_{\Omega_h} \tilde{F}  \cdot v_h -
\int_{B_h} \tilde{F} \cdot (V_h + \omega_h \times r)  + mg \cdot V_h
- \int_{\Omega_h} q_h \Div\tilde{u} \\
+ \sigma h^2  \sum_{T \in \mathcal{T}_h^{\Gamma}} \int_T 
\tilde{F}\cdot (- 
\Delta v_h- \nabla q_h) + \sigma
\int_{\Omega_h^{\Gamma}} (\Div\tilde{u}) (\Div v_h).
\end{multline*}

Also note that the RHS (\ref{defLh}) of the scheme (\ref{schCmpct}) can be rewritten as
\begin{multline*}
L_h (v_h, q_h) = \int_{\Omega_h} \rho_f g \cdot v_h - \int_{{B_h} }
\rho_f g \cdot (V_h + \omega_h \times r) + mg \cdot V_h\\
+ \sigma h^2  \sum_{T \in \mathcal{T}_h^{\Gamma}} \int_T \rho_f g \cdot (- 
\Delta v_h - \nabla q_h)
\end{multline*}
This allows us to establish the following Galerkin orthogonality relation, valid for all $v_h\in \mathcal V_h^{rbm}$, $q_h\in \mathcal M_h$,
\begin{equation}\label{GalOrt}
    c_h  (\tilde{u} - u_h, \tilde{p} - p_h ; v_h, q_h) = R_h (v_h, q_h),
\end{equation}
where
\begin{multline*}
R_h (v_h, q_h) = \int_{B_h} (\tilde{F}  - \rho_f g) \cdot \phi_hs_h
- \int_{B_h} q_h \Div\tilde{u} \\
+ \sigma h^2  \sum_{T \in \mathcal{T}_h^{\Gamma}} \int_T (\tilde{F}  -
\rho_f g) \cdot (- 
\Delta v_h- \nabla q_h) + \sigma
\int_{\Omega_h^{\Gamma}} (\Div\tilde{u}) (\Div v_h).
\end{multline*}
The integrals of $\tilde{F}  - \rho_f g$ and $\Div\tilde{u}$ on $\Omega_h$ have been rewritten as integrals on $B_h$  since both $\tilde{F}  - \rho_f g$ and $\Div\tilde{u}$ vanish on $\Omega$.

Combination of (\ref{Cea1}) and (\ref{GalOrt}) entails
\[
\interleave \tilde{u}_h - u_h, \tilde{p}_h - p_h \interleave_h \leq  C
\dfrac{c_h  (\tilde{u}_h - \tilde{u}, \tilde{p}_h - \tilde{p} ; v_h, q_h)
	+ R_h (v_h, q_h)}{\interleave v_h, q_h \interleave_h} .
\]
We can now use interpolation inequalities as in
{\cite[Section 3.4]{phifem}}. In particular, the  term with $c_h$ in the nominator of the fraction above is bounded by $Ch^k(\|u\|_{k+1,\Omega}+\|p\|_{k,\Omega}){\interleave v_h, q_h \interleave_h}$ thanks to (\ref{InterpU})--(\ref{InterpP}) and to the estimates of Lemma \ref{LemDir:prop2}. To bound $R_h(v_h,q_h)$
we recall that $\tilde{F}  - \rho_f g$ and $\Div\tilde{u}$ vanish on $\Omega$. Thus, thanks to \cite[Lemma 3.6]{phifem}
\begin{equation*}
    \|\tilde{F}  - \rho_f g\|_{0,\Omega_h^\Gamma}
    \le Ch^{k-1} \|\tilde{F}  - \rho_f g\|_{k-1,\Omega_h^\Gamma}
    \le Ch^{k-1} (\|u\|_{k+1,\Omega}+\|p\|_{k,\Omega})
\end{equation*}
and
\begin{equation}\label{bounddivuhstrip}
    \|\Div\tilde{u}\|_{0,\Omega_h^\Gamma}
    \le Ch^{k} \|\Div\tilde u\|_{k,\Omega_h^\Gamma}
    \le Ch^{k} \|u\|_{k+1,\Omega}.
\end{equation}
This, combined with the estimates of Lemma \ref{LemDir:prop2}, in particular $\|\phi_hs_h\|_{0,\Omega_h^\Gamma}\le Ch|v_h|_{1,\Omega_h^\Gamma}$, leads to $|R_h(v_h,q_h)|\leq Ch^k(\|u\|_{k+1,\Omega}+\|p\|_{k,\Omega}){\interleave v_h, q_h \interleave_h}$ and
\AL{\begin{equation}\label{H1exterr}
\interleave \tilde{u}_h - u_h, \tilde{p}_h - p_h \interleave_h \leq  Ch^k
(\|u\|_{k+1,\Omega}+\|p\|_{k,\Omega})\,.
\end{equation} }
Recalling again the interpolation estimates (\ref{InterpU})--(\ref{InterpP}), we obtain the error estimates for $u$ and $p$, announced by \eqref{H1err}.

\textbf{Proof of the a priori error estimate \eqref{rigerr} on the velocity of the solid:}
\AL{We have by the construction of the interpolant $\tilde{u}_h=\phi_h\tilde w_h+\chi_h(U+\psi\times r)$ and thanks to  (\ref{BoundVhomh})
\begin{equation*}
      |U-U_h|+|\psi-\psi_h|
\leqslant C\|\phi_h(\tilde w_h-w_h) +\chi_h(U-U_h+(\psi-\psi_h)\times r)\|_{1,\Omega_h}
=C\|\tilde u_h-u_h\|_{1,\Omega_h}
\end{equation*}
which proves \eqref{rigerr} thanks to (\ref{H1exterr}).
}

 \textbf{Proof of the $L^2$ a priori error estimate \eqref{L2err}:}
  Let $(v,q,V,\omega)\in H^2(\Omega)^d\times H^1(\Omega) \times\mathbb{R}^d\times\mathbb{R}^{d'}$ the solution to
\begin{equation*}
\left\{\begin{array}{ll}
- 2 \Div D(v) + \nabla q = u-u_h, &\tmop{ in } \Omega, 
\\
 \Div v = 0, &\tmop{ in } \Omega,
 \\
 v = V + \omega \times r, &\tmop{ on } \Gamma , \\
 v = 0, &\tmop{ on } \Gamma_w, \\[1pt]
\int_{\Gamma} (2 D (v) - q I) n = 0,  &\\[3pt]
\int_{\Gamma} (2 D (v) - q I) n \times r = 0, & \\[3pt]
\int_\Omega q =0. &\end{array}\right.
\end{equation*}
An integration by parts gives
\begin{align}\label{eq:esti H1 norm}
\|u-u_h\|_{0,\Omega}^2&=
\int_{\Omega}(u-u_h)(- 2\Div D(v)+\nabla q)\\
&=2\int_{\Omega}D(u-u_h):D(v) - \int_{\Omega}q\Div(u-u_h)
- \int_{\Omega}(p-p_h)\Div v .
\notag \\
&\qquad + \AL{\int_{\Gamma}((\phi_h-\phi)w_h+(\chi_h-\chi)(U_h+\phi_h\times r))\cdot (2D(v)-q I)n }
\notag
\end{align}
Note that the boundary term \AL{$\int_{\Gamma}u\cdot (2D(v)-q I)n$ vanishes since $u$ is a rigid body motion on $\Gamma$. For the same reason, $\int_{\Gamma}(\phi w_h+\chi(U_h+\phi_h\times r)\cdot (2D(v)-q I)n $ vanishes.}

Let $(\tilde v,\tilde q)\in H^2(\Omega_h)^d\times H^1(\Omega_h)$ coincide with $(v,q)$ on $\Omega$. They can be constructed by a bounded extension operator in $H^2\times H^1$ so that
\begin{equation}\label{regvq}
  \|\tilde v\|_{2,\Omega_h} +\|\tilde q\|_{1,\Omega_h}\leqslant
  C(\|v\|_{2,\Omega} +\|q\|_{1,\Omega})\leqslant
  C\|u-u_h\|_{0,\Omega}.
\end{equation}
We now further rewrite  \eqref{eq:esti H1 norm}
using Galerkin orthogonality (\ref{GalOrt}) with the test functions $v_h=\phi_hs_h+\chi_h(V_h+\omega_h\times r)\in \mathcal V_h^{rbm}$ and $q_h\in \mathcal M_h$ and recalling $B_h=\Omega_h\backslash\Omega$,
\begin{multline}\label{cf 3.24 phiDir}
   \|u-u_h\|_{0,\Omega}^2\\
= \underbrace{
\int_{\Omega_h} 2 D (\tilde u-u_h) : D (\tilde v-v_h)
-\int_{\Omega_h} (\tilde q-q_h) \Div (\tilde u -u_h)
-\int_{\Omega_h} (\tilde p-p_h) \Div (\tilde v -v_h) }
\limits_{I} \\
- \underbrace{\left(\int_{B_h} 2 D (\tilde  u-u_h) : D(\tilde v)
-\int_{B_h} \tilde q \Div (\tilde u -u_h)
-\int_{B_h} (\tilde p-p_h) \Div\tilde v \right)}
\limits_{II}  \\
  + \underbrace{\int_{G_h} (2 D (\tilde u-u_h)
  - (\tilde p-p_h)I) n \cdot \phi_hs_h}\limits_{III}
 \end{multline}\begin{multline*}
 \underbrace{- \sigma_u  h \sum_{E \in \mathcal{F}_h^{\Gamma}} \int_E \left[
  \frac{\partial}{\partial n} (\tilde u-u_h) \right] \cdot \left[
  \frac{\partial v_h}{\partial n} \right]
  -\sigma_uh^3 \sum_{E \in \mathcal{F}_h^{\Gamma}} \int_E \left[
  \frac{\partial^2}{\partial n^2} (\tilde u-u_h) \right] \cdot \left[
  \frac{\partial^2 v_h}{\partial n^2}  \right]}\limits_{IV}\\
-\underbrace{ \sigma h^2  \sum_{T \in
  \mathcal{T}_h^{\Gamma}} \int_T- \Delta ( \tilde u-u_h) + \nabla(\tilde p -p_{h})) \cdot (-   \Delta v_h - \nabla q_h) }\limits_{V}\\
- \underbrace{ \sigma \sum_{T \in \mathcal{T}_h^{\Gamma}} \int_T \Div (\tilde u-u_h) \Div v_h  }\limits_{VI}
+ \underbrace{ R_h( v_h,q_h)  }\limits_{VII} \\
+ \AL{\underbrace{\int_{\Gamma}((\phi_h-\phi)w_h+(\chi_h-\chi)(U_h+\phi_h\times r))\cdot (2D(v)-q I)n}\limits_{VIII} }
\end{multline*}

We now take $V_h=V,\omega_h=\omega$ and set $s_h\in \mathcal V_h$ so that $\phi_hs_h$ is an optimal interpolant of $v-\chi_h(V+\omega\times r)$, as guaranteed by Lemma \ref{LemInterp}. We also set $q_h=\tilde I_h \tilde q$ using an appropriate Clément interpolation $\tilde I_h$. We can now estimate all the terms of (\ref{cf 3.24 phiDir}) using the already proven estimate (\ref{H1exterr}) and the interpolation estimates for $\tilde v-v_h$ and $\tilde p-p_h$ . This gives
\begin{equation}\label{L2errpresque}
\|u-u_h\|_{0,\Omega}^2 \le
Ch^{k+1/2} (\|u\|_{k+1,\Omega}+ \|p\|_{k,\Omega})(\|\tilde v\|_{2,\Omega_h}+\|\tilde q\|_{1,\Omega_h}).
\end{equation}
\AL{In particular, term $I$ is completely standard and gives in fact a contribution of the optimal order $h^{k+1}$.}
Rather than go to the details of the tedious calculations leading to the bounds of the remaining terms, we prefer here to refer to the similar arguments used in \cite{phifem} to estimate the terms in eq.~(3.24). Indeed, the terms $II-III$ in (\ref{cf 3.24 phiDir}) can be treated as the terms $II-III$  in eq.~(3.24) of \cite{phifem}. Terms $IV-V$ in (\ref{cf 3.24 phiDir}) can be treated as term $IV$  in eq.~(3.24) of \cite{phifem}.  Terms $VI$ in (\ref{cf 3.24 phiDir}) is also similar to the latter (note, in particular,   $\|\Div\tilde{v}\|_{0,\Omega_h^\Gamma}    \le Ch \|v\|_{2,\Omega}$ similarly to (\ref{bounddivuhstrip})). Finally, term $VII$ in (\ref{cf 3.24 phiDir}) can be treated as term $V$  in eq.~(3.24) of \cite{phifem}. \MD{As in \cite{phifem}, all these terms result in the sub-optimal estimate of order $O (h^{k + 1 / 2})$. The origin of this sub-optimality lies in the lack of adjoint consistency in formulation (\ref{schCmpct}): the adjoint discrete problem cannot be interpreted as a consistent discretization of a meaningful continuous problem. }

\AL{The only term in (\ref{cf 3.24 phiDir}), which does not have a direct analogue in \cite{phifem}, is term $VIII$. To bound it, we apply Cauchy-Schwarz inequality together with the interpolation estimates on $\phi  - \phi_h$ and $\chi  - \chi_h$, recalling the hypotheses $\phi \in C^{k + 1} (\Omega_h)$ and $\chi \in H^{k + 1}(\Omega_h)$:
\[ | VIII | \leqslant Ch^{k + 1} (\| w_h \|_{0, \Gamma} + | U_h | + | \phi_h   |) (\| D (v) \|_{0, \Gamma} + \| q \| _{0, \Gamma}) \,.\]
Then, to bound $w_h$ in $L^2 (\Gamma)$, we start by a trace inverse inequality $\| w_h \|_{0, \Gamma} \leqslant \frac{C}{\sqrt{h}} \| w_h \|_{0, \Omega_h}$ and apply Hardy inequality of Lemma \ref{lemma:hardy} to $w_h = \frac{\phi w_h}{\phi}$:
\[ \| w_h \|_{0, \Omega_h} \leqslant C \| \phi w_h \|_{1, \Omega_h} \leqslant   C (\| (\phi - \phi_h) w_h \|_{1, \Omega_h} + \| \phi_h w_h \|_{1,   \Omega_h}) \,. \]
Noting that, by interpolation and inverse inequalities,\[ \| (\phi - \phi_h) w_h \|_{1, \Omega_h} \leqslant C (\| \phi - \phi_h   \|_{L^{\infty} (\Omega_h)} | w_h |_{1, \Omega_h} + \| \nabla \phi - \nabla   \phi_h \|_{L^{\infty} (\Omega_h)} \| w_h \|_{0, \Omega_h}) \leqslant Ch^k   \| w_h \|_{0, \Omega_h} \]we conclude\[ (1 - Ch^k) \| w_h \|_{0, \Omega_h} \leqslant C \| \phi_h w_h \|_{1,   \Omega_h} \]
Hence, for $h$ small enough,
\[ | VIII | \leqslant Ch^{k + 1} \left( \frac{1}{\sqrt{h}} \| \phi_h w_h   \|_{1, \Omega_h} + | U_h | + | \phi_h | \right) (\| D (v) \|_{0, \Gamma} +   \| q \| _{0, \Gamma}) \,.\]
Recalling that $u_h = \phi_h w_h + \chi_h (U_h + \phi_h \times r)$, we conclude by Lemma \ref{LemDir:prop2} that $\| \phi_h w_h   \|_{1, \Omega_h}$, $| U_h |$, $| \phi_h |$ can be all bounded by  $\|u_h\|_{1,\Omega_h}$. Applying the trace inequalities to $v$ and $q$, we arrive at
\[ | VIII | \leqslant Ch^{k + 1 / 2} \| u_h \|_{1, \Omega_h} (\| v \|_{2,   \Omega} + \| q \| _{1, \Omega}) \,.\]
Since we know that $\| u_h \|_{1, \Omega_h}$ is bounded by the norms of $u$ and $p$ thanks to the already proven error estimates for the velocity in $H^1$ norm, we conclude that term $VIII$ contributes to (\ref{L2errpresque}) in the same manner as all the other terms. }

Combining (\ref{L2errpresque}) with (\ref{regvq}) proves (\ref{L2err}).

\section{Numerical tests}\label{NumSec}

In this section,  we present numerical results, first in the particular case of a fixed particle, i.e. for the Stokes equations aone in a fixed domain (cf. Appendix A and the $\phi$-FEM scheme \eqref{sch2}), and second in the case of the particulate flows (equations \eqref{eq:1a}-\eqref{intp0} and the $\phi$-FEM scheme \eqref{sch}).
These schemes will be compared with standard FEM on fitted triangular meshes \MD{as on Fig.~\ref{fig:geo mesh} left (we do not introduce higher order approximations of the curvilinear boundary of the domain, as would be the case in the isoparametric FEM for example)}.
In the case of Stokes equations, the error is measured with respect to a  manufactured solution, while a reference solution obtained by standard FEM on a fitted fine mesh is used in the case of particulate flows.
We have implemented $\phi$-FEM in \texttt{multiphenics} \cite{multiphenics}. The implementation scripts can be consulted on GitHub.\footnote{\url{https://github.com/michelduprez/phi-FEM-particulate-flows-Stokes.git}\\ or \url{https://doi.org/10.5281/zenodo.6817135}}

The fluid/solid domain in both our test cases is $\mathcal{O}=(0,1)^2\subset\mathbb{R}^2$ and we take the particle $\mathcal{S}$ as a disk  of radius $R=0.21$ centered at a point $(0.5,0.5)$. Then $\Omega=[0,1]^2\backslash \mathcal{S}$. The geometry is presented in Fig.~\ref{fig:omega_h} (left). In $\phi$-FEM, we use the following level-set function, well defined and smooth for all $(x,y)\in\mathbb{R}^2$,
\begin{equation}\label{eq:num phi}
\phi(x,y)=R^2-(x-0.5)^2-(y-0.5)^2.
\end{equation}

\MD{We present only the results with the lowest order Taylor-Hood elements, i.e. setting $k=2$ and thus using $P_2$ elements for $w_h$, $\phi_h$, $\chi_h$ (the approximation $\phi_h$ for $\phi$ is exact  in this case). The tests with elements of higher order would lead to essentially the same observations. The stabilization parameters are set to $\sigma=\sigma_u=20$ (as in \cite{phifem}).}

\subsection{Particular case of a fixed particle: Stokes equations}

We start by Stokes equations (\ref{eq:stokes}) in the domain $\Omega$, as above, with the right-hand side such that the  exact solution is as follows, cf.~\cite{alexei},
\begin{align*}
    u(x,y)&=(\cos(\pi x)\sin(\pi y), -\sin(\pi x) \cos(\pi y)) , \\
    p(x,y)&=(y-0.5)\cos(2\pi x)+(x-0.5)\sin(2\pi y),
\end{align*}
taking $\nu=1$.

\begin{figure}[ht]
\begin{center}
\includegraphics[width=0.3\linewidth]{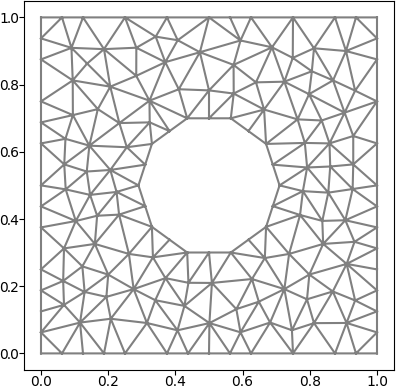}
\includegraphics[width=0.3\linewidth]{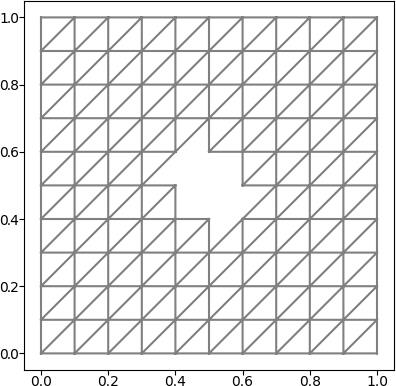}
\caption{
Mesh used for the standard FEM formulation (left) and mesh used in the $\phi$-FEM schemes (right). }
\label{fig:geo mesh}
\end{center}\end{figure}

We shall test the $\phi$-FEM scheme given by \eqref{sch2} in Appendix A and compare it with a standard Taylor-Hood FEM on a fitted mesh. To this end, we introduce a quasi-uniform triangular mesh $\mathcal{T}_h^{fit}$ fitted to $\Omega$, \AL{in the sense that the boundary nodes of the mesh lie on $\Gamma\cup\Gamma_w$.} The domain occupied by this mesh, denoted by $\Omega_h^{fit}$, is a polygonal approximation of $\Omega$, since the interface $\Gamma$ is curvilinear and cannot be represented exactly by the straight edges.
We 
introduce then the FE spaces
\begin{equation}\label{eq:def Vh} \mathcal V_h^{fit,u_D} = \left\{ \nobracket v_h \in C (\bar{\Omega}_h^{fit})^d : v_h |_T \in
   \mathbb{P}^2 (T)^d \hspace{1em} \forall T \in \mathcal{T}_h^{fit}, \hspace{1em}
   v_h = I_hu_D \tmop{ on } \Gamma_w \cap \Gamma_h^{fit} \right\}\,
\end{equation}   
 \begin{equation}\label{eq:def Mh} \mathcal M_h^{fit} = \left\{ \nobracket q_h \in C (\bar{\Omega}_h^{fit}) : q_h |_T \in
   \mathbb{P}^{1} (T) \hspace{1em} \forall T \in \mathcal{T}_h^{fit},
   \hspace{1em} \int_{\Omega_h^{fit}} q_h = 0 \right\},
   \end{equation}
where $\Gamma_h^{fit}$ is the part of the boundary of $\Omega_h^{fit}$ approximating $\Gamma$.   
A standard fitted Taylor-Hood FEM can be written as: find $(u_h,p_h)\in \mathcal V_h^{fit,u_D}\times \mathcal M_h^{fit}$ such that
\begin{equation}\label{eq:cla2}
    \int_{\Omega_h^{fit}} 2 D (u_h) : D (v_h) - \int_{\Omega_h^{fit}} p_h \Div v_h
  - \int_{\Omega_{h}^{fit}} q_h \Div u_h
  = \int_{\Omega} f v_h,
\end{equation}
  for all $(v_h,q_h)\in \mathcal V_h^{fit,0}\times \mathcal M_h^{fit}$.

Examples of meshes for the standard FEM formulation and the $\phi$-FEM scheme are given in Fig.~\ref{fig:geo mesh} (left) and (right), respectively.
In Fig.~\ref{fig:ratefix}, we report convergence results for the standard fitted Taylor-Hood  FEM \eqref{eq:cla2} and for $\phi$-FEM \eqref{sch2} in the case of Stokes equations. 
We recover the theoretical rates of convergence of  $\phi$-FEM stated in Theorem \ref{th2}: \AL{2nd order in $h$ for both the $H^1$-error in velocity and the $L^2$-error in pressure. The $L^2$-error in velocity is actually better than theoretically predicted: it is of order 3 instead of theoretically expected 2.5. We observe thus that $\phi$-FEM is fully optimal in practice: it demonstrates the same convergence rates in all the aforementioned norms as the standard FEM would demonstrate in the ideal situation of a fitted mesh on a convex polygonal domain. Actually, our setting is not ideal, $\Omega$ is neither convex, not polygonal. It is thus not surprising that the standard Taylor-Hood FEM underperforms (we recall that the mesh $\Th^{fit}$ is triangular with straight edges and no higher order geometrical approximation is introduced there). Experimentally observed convergence rates for this scheme are $\approx 2$ for the $L^2$-error in velocity, $\approx 1.5$ for the $H^1$-error in velocity, and slightly smaller than 2 for the $L^2$-error in pressure (the error in pressure is thus the only one for which the optimal convergence order seems to be retained in practice). Moreover, all the errors on all the considered meshes are systematically smaller for $\phi$-FEM than for the fitted FEM.}

\begin{rmk}\label{RmkPres}
	\MD{As already noted in Remark \ref{rmq1}, in $\phi$-FEM, it is impossible to impose $\int_\Omega p_h=0$. In our implementation, we rather impose $\int_{\Omega_h}p_h=0$ with the help of a Lagrange multiplier, i.e. we add 
	$\lambda_h\int_{\Omega_h} p_h+\mu_h\int_{\Omega_h}q_h$ (with $\lambda_h,\mu_h\in\mathbb{R}$) to the formulation. To compute the relative $L^2$-error for the pressure, we should compare $p_h$ with the exact pressure whose integral vanishes over $\Omega_h$ (recall that the pressure is physically defined up to an additive constant any way). We thus introduce  $\tilde p = p-c_{\Omega_h}$ with $c_{\Omega_h}=\frac{1}{|\Omega_h|}\int_{\Omega_h}p$ and compute the errors with respect to $\tilde p$. Similarly, in the case of standard fitted FEM, we impose $\int_{\Omega_h^{fit}}p_h=0$ by a Lagrange multiplier and compute the relative error against  $\tilde p = p-c_{\Omega_h^{fit}}$ with $c_{\Omega_h^{fit}}=\frac{1}{|\Omega_h^{fit}|}\int_{\Omega_h^{fit}}p$.}
\end{rmk}

\begin{figure}[ht]
\begin{center}
\begin{tikzpicture}[thick,scale=0.5, every node/.style={scale=1}]
\begin{loglogaxis}[xlabel=$h$
,legend pos=north west
,legend style={at={(0,1)}, font=\large, anchor=south west}
, legend columns=1]
  \addplot[color=black,mark=*,dotted,very thick] coordinates {
(0.126843364231,0.00261684125331)
(0.0636034122632,0.000597412245782)
(0.0318181380538,0.000158610228904)
(0.0159086220606,3.89047870474e-05)
(0.00795487566496,9.75411986578e-06)
 };
 \addplot[color=black,mark=triangle] coordinates {
(0.141421356237,0.000596594874732)
(0.0707106781187,7.02395633409e-05)
(0.0353553390593,6.68655697636e-06)
(0.0176776695297,5.81315318962e-07)
(0.00883883476483,6.44205915108e-08)
 };
\logLogSlopeTriangle{0.33}{0.2}{0.45}{2}{black};
\logLogSlopeTriangle{0.73}{0.2}{0.35}{3}{black};
 \legend{ $\|u-u_h\|_{0,\Omega_h^{fit}}/\|u\|_{0,\Omega_h^{fit}}$ st.-FEM, $\|u-u_h\|_{0,\Omega_h}/\|u\|_{0,\Omega_h}$ $\phi$-FEM}
\end{loglogaxis}
\end{tikzpicture}
\begin{tikzpicture}[thick,scale=0.5, every node/.style={scale=1}]
\begin{loglogaxis}[xlabel=$h$
,legend pos=north west
,legend style={at={(0,1)}, font=\large, anchor=south west}
, legend columns=1]
  \addplot[color=black,mark=*,dotted,very thick] coordinates {
(0.126843364231,0.0124156031879)
(0.0636034122632,0.00348883976582)
(0.0318181380538,0.00115184495002)
(0.0159086220606,0.000364688043354)
(0.00795487566496,0.000122570599207)
 };
 \addplot[color=black,mark=triangle] coordinates {
(0.141421356237,0.00448467460404)
(0.0707106781187,0.000881056379507)
(0.0353553390593,0.000195398382464)
(0.0176776695297,4.41540724912e-05)
(0.00883883476483,1.05000731009e-05)
 };
\logLogSlopeTriangle{0.73}{0.2}{0.35}{2}{black};
 \legend{ $\|u-u_h\|_{1,\Omega_h^{fit}}/\|u\|_{1,\Omega_h^{fit}}$ st.-FEM, $\|u-u_h\|_{1,\Omega_h}/\|u\|_{1,\Omega_h}$ $\phi$-FEM}
\end{loglogaxis}
\end{tikzpicture}
\begin{tikzpicture}[thick,scale=0.5, every node/.style={scale=1}]
\begin{loglogaxis}[xlabel=$h$
,legend pos=north west
,legend style={at={(0,1)}, font=\large, anchor=south west}
, legend columns=1]
  \addplot[color=black,mark=*,dotted,very thick] coordinates {
(0.126843364231,0.156809478242)
(0.0636034122632,0.0366934580862)
(0.0318181380538,0.0122960749514)
(0.0159086220606,0.00293071437109)
(0.00795487566496,0.000862640354336)
 };
 \addplot[color=black,mark=triangle] coordinates {
(0.141421356237,0.0736447200521)
(0.0707106781187,0.0124469934719)
(0.0353553390593,0.00267823057985)
(0.0176776695297,0.000460590935005)
(0.00883883476483,0.000102142682051)
 };
\logLogSlopeTriangle{0.73}{0.2}{0.35}{2}{black};
 \legend{ $\|\tilde p-p_h\|_{0,\Omega_h^{fit}}/\|^{fit}p\|_{0,\Omega_h^{fit}}$ st.-FEM, $\|\tilde p-p_h\|_{0,\Omega_h}/\|\tilde p\|_{0,\Omega_h}$ $\phi$-FEM}
\end{loglogaxis}
\end{tikzpicture}
\caption{Rates of convergence for the standard Taylor-Hood  FEM scheme \eqref{eq:cla2} and the $\phi$-FEM scheme \eqref{sch2} in the case of Stokes equations. The $L^2$ relative error of the velocity (left), the $H^1$ relative error of the velocity (middle) and the $L^2$ relative error of the pressure (right). }
\label{fig:ratefix}
\end{center}\end{figure}
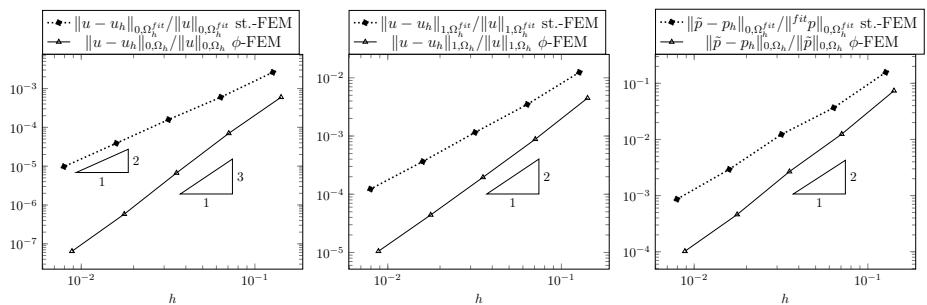

\subsection{Particulate flows}

We now turn to the creeping particulate flow equations  \eqref{eq:1a}-\eqref{intp0} in the same geometry as before. The level-set $\phi$ is again defined by \eqref{eq:num phi}. The vertical gravity is assumed to be equal to $10$. The density of the fluid and the solid are equal to $\rho_f=1$ and $\rho_s=2$, respectively, and the viscosity $\nu=1$. We deduce that the mass of the solid is equal to $m=\rho_s\pi^2R^2$.
For the cut-off $\chi$, we consider the radial polynomial of degree $5$ on the interval $(r_0,r_1)$ with $r_0=R$ and $r_1=0.45$ such that $\chi(r_0)=1$ and $\chi'(r_0)=\chi''(r_0)=\chi(r_1)=\chi'(r_1)=\chi''(r_1)=0$ \MD{so that, setting $\chi=1$ for $r<R$ and $\chi=0$ for $r>R$, the resulting $\chi$ is two times differentiable and thus $\chi\in H^{3}(\mathcal{O})$ as required by Assumption \ref{asm:chi}:} 
$$\chi(r)=\begin{cases}
1, &\text{ for } r<r_0 \\
1+\frac{f(r_0,r_1)}{(r_1-r_0)^5}, &\text{ for } r_0<r<r_1 \\
0, &\text{ for } r>r_1 \\
\end{cases}$$
where
\begin{multline*}
    f(r_0,r_1)
    =(-6r^5+15(r_0+r_1)r^4-10(r_0^2+4r_0r_1+r_1^2)r^3+30r_0r_1(r_0+r_1)r^2\\
    -30r_0^2r_1^2r + r_0^3(r_0^2-5r_1r_0+10r_1^2)).
\end{multline*}

Let us introduce a Taylor-Hood scheme which will be compared with our $\phi$-FEM scheme. We introduce first the fitting mesh $\mathcal{T}_h^{fit}$ on domain $\Omega_h^{fit}$ as in the preceding section, cf. Fig.~\ref{fig:geo mesh} (left), and adapt the Taylor-Hood FE space velocity space, cf. (\ref{eq:def Vh}), as
$$ \mathcal V_h^{fit} = \left\{ \nobracket v_h \in C (\bar{\Omega}_h^{fit})^d : v_h |_T \in
   \mathbb{P}^2 (T)^d \hspace{1em} \forall T \in \mathcal{T}_h^{fit}, \hspace{1em}
   v_h = 0 \tmop{ on } \Gamma_w \right\},$$
while keeping the pressure space (\ref{eq:def Mh}) as before.  Note that the velocity FE space does no longer contain any restrictions on the boundary part $\Gamma_h^{fit}$ approximating $\Gamma$. We shall impose the boundary conditions there with the help of Lagrange multipliers, introducing the space
$$  
  \Lambda_h^{fit}=\left\{\mu_h\in C (\bar{\Omega}_h^{fit}) :\mu_h |_F \in \mathbb{P}^{ 2} (F) \hspace{1em} \forall F \in \mathcal{F}_h^{fit}\right\},
  $$
  where $\mathcal{F}_h^{fit}$ is the set of the boundary facets on $\Gamma_h^{fit}$.
A fitted Taylor-Hood FE formulation is then written as: find $(u_h,p_h,\lambda_{h},U_h,\psi_h)\in \mathcal V_h^{fit}\times \mathcal M_h^{fit}\times \Lambda_h^{fit}\times \mathbb{R}^d\times \mathbb{R}^{d'}$ such that
\begin{multline}\label{eq:cla}
    \int_{\Omega_h^{fit}} 2 D (u_h) : D (v_h) - \int_{\Omega_h^{fit}} p_h \Div v_h
  - \int_{\Omega_{h}^{fit}} q_h \Div u_h \\
  +\int_{\Gamma_{h}^{fit}}\lambda_{h}\cdot(v_h-V_h-\omega_h\times r)
    +\int_{\Gamma_{h}^{fit}}\mu_{h}\cdot(u_h-U_h-\psi_h\times r)
   = \MD{\int_{\Omega_{h}^{fit}} \rho_fg v_h}+mg\cdot V_h,
\end{multline}
  for all $(v_h,q_h,\mu_{h},V_h,\omega_h)\in \mathcal V_h^{fit}\times \mathcal M_h^{fit}\times \Lambda_h^{fit}\times \mathbb{R}^d\times \mathbb{R}^{d'}$.
We present in Fig.~\ref{fig:sol} the velocity obtained with the standard Taylor-Hood  FEM scheme \eqref{eq:cla}. Such a velocity and the accompanying pressure, computed on a very fine fitted grid, will be used as the reference solution in the subsequent numerical experiments and will be denoted as $u,p$ in what follows.

\begin{figure}[ht]
\begin{center}
\includegraphics[width=0.3\linewidth]{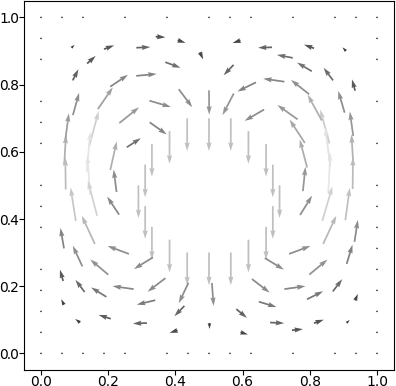}
\caption{Velocity obtained with the standard Taylor-Hood  FEM scheme \eqref{eq:cla}.}
\label{fig:sol}
\end{center}
\end{figure}

A comparison between the standard Taylor-Hood  FEM \eqref{eq:cla} and $\phi$-FEM \eqref{sch} is presented in Figs.~\ref{fig:ratemov} and \ref{fig:rate2}
\MD{(we do not report the error in the particle rotation velocity whose exact value is 0 and which is accurately predicted by all the schemes up to machine precision; this can be attributed to the symmetry of our test case)}. 
Since the error is computed with respect to a fine reference solution solution living on a fine fitted mesh, the numerical solution computed by either \eqref{eq:cla} or  \eqref{sch} should be projected to this fine mesh in order to compute the errors. This is reflected in the legends of the convergence curves: $\Omega^{fine}$ stands for the fine fitted approximation there. \AL{Similarly to Remark \ref{RmkPres}, we impose the pressure mean by a Lagrange multiplier in both schemes, and adjust the additive constants properly when computing the errors in pressure.}  

The conclusions are essentially the same as in the previous test case (Stokes equations alone): $\phi$-FEM exhibits optimal convergence rates, while the fitted standard FEM is suboptimal (with the exception of the $L^2$ error in pressure). It seems again that our theoretical estimates for the $L^2$-error of the  fluid velocity is not sharp: the experimental convergence rate is $k+1$ rather than $k+\frac 12$. The same observation can be made about the particle velocity: the experimental convergence rate is $k+1$ rather than theoretically predicted $k$.

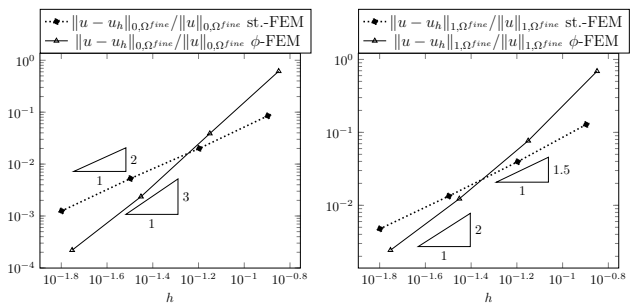
\begin{figure}[ht]
\begin{center}
\begin{tikzpicture}[thick,scale=0.5, every node/.style={scale=1}]
\begin{loglogaxis}[xlabel=$h$
,legend pos=north west
,legend style={at={(0,1)}, font=\large, anchor=south west}
, legend columns=1]
  \addplot[color=black,mark=*,dotted,very thick] coordinates {
(0.126843364231,0.0851833515147)
(0.0636034122632,0.0200412315494)
(0.0318181380538,0.00527242025755)
(0.0159086220606,0.00125160277941)
 };
 \addplot[color=black,mark=triangle] coordinates {
(0.141421356237,0.61493857136)
(0.0707106781187,0.0387925463973)
(0.0353553390593,0.00235980964656)
(0.0176776695297,0.000219727439273)
 };
\logLogSlopeTriangle{0.33}{0.2}{0.45}{2}{black};
\logLogSlopeTriangle{0.53}{0.2}{0.25}{3}{black};
 \legend{ $\|u-u_h\|_{0,\Omega^{fine}}/\|u\|_{0,\Omega^{fine}}$ st.-FEM, $\|u-u_h\|_{0,\Omega^{fine}}/\|u\|_{0,\Omega^{fine}}$ $\phi$-FEM}
\end{loglogaxis}
\end{tikzpicture}
\begin{tikzpicture}[thick,scale=0.5, every node/.style={scale=1}]
\begin{loglogaxis}[xlabel=$h$
,legend pos=north west
,legend style={at={(0,1)}, font=\large, anchor=south west}
, legend columns=1]
  \addplot[color=black,mark=*,dotted,very thick] coordinates {
(0.126843364231,0.128162791737)
(0.0636034122632,0.0395972300619)
(0.0318181380538,0.01339591578)
(0.0159086220606,0.0047667315541)
 };
 \addplot[color=black,mark=triangle] coordinates {
(0.141421356237,0.688875146104)
(0.0707106781187,0.0767538958043)
(0.0353553390593,0.0123093246107)
(0.0176776695297,0.00243009248065)
 };
\logLogSlopeTriangle{0.43}{0.2}{0.1}{2}{black};
\logLogSlopeTriangle{0.73}{0.2}{0.4}{1.5}{black};
 \legend{ $\|u-u_h\|_{1,\Omega^{fine}}/\|u\|_{1,\Omega^{fine}}$ st.-FEM, $\|u-u_h\|_{1,\Omega^{fine}}/\|u\|_{1,\Omega^{fine}}$ $\phi$-FEM}
\end{loglogaxis}
\end{tikzpicture}
\caption{Rates of convergence for the standard Taylor-Hood  FEM scheme \eqref{eq:cla} and the $\phi$-FEM scheme \eqref{sch} in the case of particulate flows. The $L^2$ relative error of the velocity (left) and the $H^1$ relative error of the velocity (right).}
\label{fig:ratemov}
\end{center}\end{figure}

\begin{figure}[ht]
\begin{center}
\begin{tikzpicture}[thick,scale=0.5, every node/.style={scale=1}]
\begin{loglogaxis}[xlabel=$h$
,legend pos=north west
,legend style={at={(0,1)}, font=\large, anchor=south west}
, legend columns=1]
  \addplot[color=black,mark=*,dotted,very thick] coordinates {
(0.126843364231,0.00765002232217)
(0.0636034122632,0.00178176724726)
(0.0318181380538,0.000473835012931)
(0.0159086220606,0.000118493755294)
 };
 \addplot[color=black,mark=triangle] coordinates {
(0.141421356237,0.0799691585045)
(0.0707106781187,0.00607843739558)
(0.0353553390593,0.00104879698873)
(0.0176776695297,0.000180057562223)
 };
\logLogSlopeTriangle{0.73}{0.2}{0.25}{2}{black};
 \legend{ $\|p-p_h\|_{0,\Omega^{fine}}/\|p\|_{0,\Omega^{fine}}$ st.-FEM, $\|p-p_h\|_{0,\Omega^{fine}}/\|p\|_{0,\Omega^{fine}}$ $\phi$-FEM}
\end{loglogaxis}
\end{tikzpicture}
\begin{tikzpicture}[thick,scale=0.5, every node/.style={scale=1}]
\begin{loglogaxis}[xlabel=$h$
,legend pos=north west
,legend style={at={(0,1)}, font=\large, anchor=south west}
, legend columns=1]
  \addplot[color=black,mark=*,dotted,very thick] coordinates {
(0.126843364231,0.107916009683)
(0.0636034122632,0.0255882918524)
(0.0318181380538,0.00683654255282)
(0.0159086220606,0.00162490193875)
 };
 \addplot[color=black,mark=triangle] coordinates {
(0.141421356237,0.365547593002)
(0.0707106781187,0.0243072040108)
(0.0353553390593,0.00142857620397)
(0.0176776695297,0.000136875666604)
 };
\logLogSlopeTriangle{0.33}{0.2}{0.35}{2}{black};
\logLogSlopeTriangle{0.53}{0.2}{0.2}{3}{black};
 \legend{ $|U-U_h|/|U|$ st.-FEM, $|U-U_h|/|U|$ $\phi$-FEM}
\end{loglogaxis}
\end{tikzpicture}
\caption{Rates of convergence for the standard Taylor-Hood  FEM scheme \eqref{eq:cla2} and the $\phi$-FEM scheme \eqref{sch2} in the case of particulate flows. The $L^2$ relative error of the pressure (left) and  relative error of the displacement of the solid 
(right).}
\label{fig:rate2}
\end{center}\end{figure}
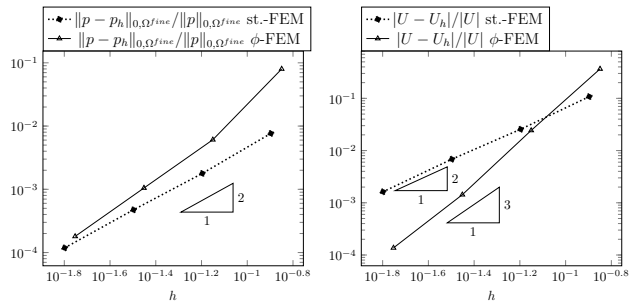

\section{Conclusions and perspectives} 
The main goal of the $\phi$-FEM approach is  to avoid the non standard quadrature on the cut mesh cells inherent to CutFEM. Some of the attractive features of $\phi$-FEM are:
\begin{itemize}
	\item $\phi$-FEM is readily available for finite elements of any order (without the need of any additional higher order approximation of the geometry). 
	\item (corollary of the previous point) $\phi$-FEM outperforms the standard fitted FEM on polygonal meshes if the order of piecewise polynomials is $>1$ (unless one implements more complicated versions of FEM in order to treat properly the curvilinear boundary, such as the isoparametric FEM).
	\item  $\phi$-FEM uses standard FE spaces and is based on a variational formulation of the problem, so that it can be easily implemented in existing general-purpose FEM libraries, provided they allow one to compute the jumps on selected facets and the second derivatives on selected cells.
\end{itemize}	
To counter-balance the last point, we should mention that the variational formulation at the base of $\phi$-FEM can be rather complicated. Typically, it contains more terms than a fitted FEM or a CutFEM scheme for the same problem. The implementation, although straightforward, may thus require some extra programming efforts. As a reward, one easily gets a good description of the geometry which may result in smaller computing times than those for standard FEM, as highlighted in \cite{cotin2021phi}.  

In the present article we have adapted $\phi$-FEM to the Stokes equations and to the combination of the Stokes equations with the motion of a rigid particle inside the fluid, providing a first brick in future applications of this technology to fluid structure interaction problems.

Of course, there remains a lot of open questions. To conclude, we list some of those  (not necessarily related to the particular case of Stokes equations or flows with particles) and envisage potential solutions:
\begin{itemize}
    \item Theoretical suboptimality of $\phi$-FEM in the $L^2$-norm. However, all the numerical experiments show the optimal convergence in this norm, which gives us hope that a sharper theoretical estimate could be found.
    \item A mismatch between the theoretical construction of the active mesh $\Th$ and its practical implementation, cf. Remark \ref{PractGeom}. The theoretical construction of $\Th$ is crucial for the current proof of coercivity, cf. Lemma \ref{lemma:coer}.  An alternative proof should be found. 
    \item A practical construction of the levelset function, which should satisfy some assumptions summarized in Remark \ref{RmkAsmPhi}. In the present article, $\phi$ was given analytically, but in more realistic applications one will have to construct an appropriate $\phi_h$ on the discrete level directly. A good candidate, in the vicinity of $\Gamma$ would be the signed distance to $\Gamma$, for which there exist efficient libraries, but it will remain to extend it in a smooth way (again directly on the discrete level) far from $\Gamma$. As an alternative, we note that in other versions of $\phi$-FEM, $\phi_h$ may be required only on mesh cells near $\Gamma$, cf. \cite{phifem2, cotin2021phi}. 
    \item The current construction of stabilization in $\phi$-FEM relies heavily on the linearity of the governing equations. Indeed, the terms with prefactor $\sigma$ in (\ref{sch}) reproduce the governing equations for both trial and test functions. If the equations are non-linear, one cannot do this since the formulation should remain linear in the test functions. Thus, going from Stokes to Navier-Stokes, for example, is not straightforward. Various options of linearization of the stabilization terms should be yet tested numerically and theoretically. 
\end{itemize}

\appendix


\section{$\phi$-FEM for Stokes equations in a fixed domain}

In this section, we propose a $\phi$-FEM scheme for the simpler case of a fixed solid in the fluid.
The governing equations are the non-homogeneous Stokes equations given by
\begin{equation}\label{eq:stokes}
\left\{\begin{array}{ll}
- 2\Div(D( u)) + \nabla p = f, &\tmop{in } \Omega, \\
 \Div u = 0,& \tmop{in }  \Omega, \\
 u = u_D, &\tmop{on } \Gamma\cup\Gamma_w.
\end{array}\right.
\end{equation}

Assume that $u_D$ and $f$ are defined in the whole discrete domain $\Omega_h$. Inspired by the $\phi$-FEM scheme for particulate flow given in \eqref{sch}, we can derive the following $\phi$-FEM scheme for the non-homogeneous Stokes equations \eqref{eq:stokes}:
find $w_h \in \mathcal V_h$, $p_h \in \mathcal M_h$ satisfying
\begin{equation}
  \label{sch2}
 \int_{\Omega_h} 2 D (u_D + \phi_h w_h) : D
  (\phi_h s_h) - \int_{G_h} (2 D (u_D + \phi_h w_h)
  - p_hI) n \cdot \phi_h s_h \text{}
\end{equation}
\[ - \int_{\Omega_h} p_h \Div (\phi_h s_h) - \int_{\Omega_h} q_h \Div (u_D  + \phi_h w_h)+\sigma_u J_u(u_D  +\phi_h w_h,\phi_h s_h) \]
\[   + \sigma h^2  \sum_{T \in
   \mathcal{T}_h^{\Gamma}} \int_T (- \Delta (u_D  +\phi_h w_h) + \nabla p_h) \cdot (-
   \Delta (\phi_h s_h) - \nabla q_h) \]
\[ + \sigma \sum_{T \in \mathcal{T}_h^{\Gamma}} \int_T \Div (u_D  +\phi_h w_h) \Div (\phi_h v_h)
  \]
\[ = \int_{\Omega_h} f \phi_h s_h   + \sigma h^2  \sum_{T \in \mathcal{T}_h^{\Gamma}} \int_T f (-
   \Delta (u_D+\phi_h s_h) -\nabla q_h), \]
for all $s_h \in \mathcal V_h$, $q_h \in \mathcal M_h$.

We now state our second main result for the Stokes equations:
 \begin{theorem}\label{th2}
Suppose that Assumptions \ref{asm0}, \ref{asm1}, \ref{asm2} and \ref{asmStokes} hold true, the mesh $\mathcal{T}_h$ is quasi-uniform.
Let $(u,p)\in H^{k+1}(\Omega)^d\times H^{k}(\Omega)$  be the solution to \eqref{eq:stokes} and $(w_h,p_h)\in \mathcal V_h\times \mathcal M_h$ be the solution to \eqref{sch2}.  Denoting $u_h:=\phi_h w_h $, it holds
\begin{equation*}
  | u - u_h|_{1, \Omega\cap\Omega_h}+\AL{\frac{1}{\nu}}| p - p_h|_{0, \Omega\cap\Omega_h}  \le Ch^k (\|u\|_{k+1,\Omega}+ \AL{\frac{1}{\nu}}\|p\|_{k,\Omega})
\end{equation*}
with a constant $C>0$ depending on the $C_0$, $m$, $M$ in Assumptions \ref{asm0}, \ref{asm2}, on the maximum of the derivatives of $\phi$, on the mesh regularity, and on the polynomial degree $k$, but independent of $h$, $f$, and $u$. \\
Moreover, supposing $\Omega\subset\Omega_h$
\begin{equation*}
 \MD{  \| u - u_h\|_{0, \Omega} \le Ch^{k+1/2}
 (\|u\|_{k+1,\Omega}+ \frac{1}{\nu}\|p\|_{k,\Omega})}
 \end{equation*}
with a constant $C>0$ of the same type.
\end{theorem}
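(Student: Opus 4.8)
The plan is to establish Theorem \ref{th2} by specializing the argument already developed for Theorem \ref{th1}, exploiting the fact that the fixed-domain Stokes problem \eqref{eq:stokes}--\eqref{sch2} is obtained from the particulate-flow problem by discarding the rigid-body unknowns $U_h,\psi_h$ and the cut-off $\chi_h$, and by replacing the ansatz $\chi_h(U_h+\psi_h\times r)+\phi_h w_h$ with the affine ansatz $u_D+\phi_h w_h$. Concretely, I would work in the velocity space $W_h=\{\phi_h s_h:s_h\in\mathcal V_h\}$ (the restriction of $\mathcal V_h^{rbm}$ in \eqref{Vhrbm} to a vanishing rigid part), move the known data $u_D$ and $f$ to the right-hand side, and write \eqref{sch2} compactly as $a_h(\phi_h w_h,p_h;\phi_h s_h,q_h)=\ell_h(\phi_h s_h,q_h)$, where $a_h$ is exactly the bilinear form $c_h$ of \eqref{schCmpct} restricted to $W_h\times\mathcal M_h$. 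The first observation is that the entire technical toolbox of Sections \ref{sec:lemma}--\ref{sec:taylor} applies unchanged: Lemmas \ref{lemma:poly}, \ref{LemDir:prop1 bis}, \ref{LemKorn}, the Hardy inequality \ref{lemma:hardy}, the interpolation bound \ref{LemInterp}, and the Taylor--Hood inf-sup Lemmas \ref{lemGuzman}--\ref{lemma:magic} all concern the spaces $\mathcal V_h$, $\mathcal M_h$ and the products $\phi_h s_h$ only, never the rigid part; from Lemma \ref{LemDir:prop2} only the first (pure $\phi_h s_h$) estimates are needed.

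Next I would prove the coercivity of $a_h$ on $W_h\times\mathcal M_h$ with respect to $\interleave\cdot,\cdot\interleave_h$, the verbatim analogue of Lemma \ref{lemma:coer}. The three-step structure is reused: testing with $(\phi_h w_h,-p_h)$ to control the velocity, testing with the inf-sup function $\phi_h s_h$ of Lemma \ref{lemma:magic} to control the pressure, and combining with a small weight. The transfer of the $G_h$-boundary term to the strip $\tilde B_h=\{\phi_h>0\}\cap\Omega_h$ via the divergence theorem goes through as before; in fact it simplifies, since here $u_h-\phi_h w_h=0$ on $\tilde B_h$ (rather than being a nonzero rigid motion), so the step producing $\int_{\tilde B_h}|D(u_h)|^2$ is immediate. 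The Young/inverse-inequality bookkeeping, and the appeal to Lemma \ref{LemDir:prop1 bis} to absorb $h^2|p_h|^2_{1,\Omega_h^\Gamma}$, are identical, yielding the inf-sup constant for $\sigma,\sigma_u$ large enough.

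With coercivity in hand, the $H^1$ estimate follows the C\'ea-type scheme of Section \ref{sec:a priori}. I would extend $(u,p)$ to $(\tilde u,\tilde p)$ on $\Omega_h$, apply Lemma \ref{LemInterp} to $\tilde u-u_D$ (which vanishes on $\Gamma\cup\Gamma_w$ because $u=u_D$ there) to obtain $\tilde w_h$, and set $\tilde u_h=u_D+\phi_h\tilde w_h$, $\tilde p_h=I_h\tilde p$. Introducing $\tilde F,\tilde Q$ with $-2\Div D(\tilde u)+\nabla\tilde p=\tilde F$ and $\Div\tilde u=\tilde Q$ on $\Omega_h$ gives a Galerkin orthogonality of the form \eqref{GalOrt} with a residual supported on the strip, whose factors $\tilde F-f$ and $\Div\tilde u$ vanish on $\Omega$ and are therefore $O(h^{k-1})$ and $O(h^k)$ on $\Omega_h^\Gamma$ by the Taylor expansion argument of \cite[Lemma 3.6]{phifem}. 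Coercivity, the estimates of Lemma \ref{LemInterp}, and these consistency bounds then give $\interleave\tilde u_h-u_h,\tilde p_h-p_h\interleave_h\le Ch^k(\|u\|_{k+1,\Omega}+\|p\|_{k,\Omega})$, hence the first estimate after a triangle inequality.

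Finally, the $L^2$ estimate is obtained by the Aubin--Nitsche duality of the proof of \eqref{L2err}: solving the adjoint Stokes problem with data $u-u_h$ and repeating the decomposition \eqref{cf 3.24 phiDir}, all terms $I$--$VII$ are bounded exactly as there, yielding the suboptimal $O(h^{k+1/2})$ order. The one place I expect genuine, though minor, care is the boundary consistency term analogous to $VIII$: integration by parts leaves $\int_\Gamma(u-u_h)\cdot(2D(v)-qI)n$, and since $u=u_D$ on $\Gamma$ while $\phi_h\neq0$ on the true $\Gamma$, one has $u-u_h=-\phi_h w_h=-(\phi_h-\phi)w_h$ there. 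Bounding this uses $\|\phi-\phi_h\|_{\infty}=O(h^{k+1})$, a trace inverse inequality $\|w_h\|_{0,\Gamma}\le Ch^{-1/2}\|w_h\|_{0,\Omega_h}$, the Hardy inequality of Lemma \ref{lemma:hardy} applied to $w_h=\phi w_h/\phi$, and the already-proven $H^1$ bound on $\|\phi_h w_h\|_{1,\Omega_h}$, exactly as for $VIII$ but without the $\chi$ and rigid-body contributions. Combined with a bound of type \eqref{regvq} on the adjoint solution, this closes the $O(h^{k+1/2})$ $L^2$ estimate. The overarching point is that no new obstacle arises: Theorem \ref{th2} is a structural simplification of Theorem \ref{th1}, and the work reduces to verifying that the lemmas transfer and that the non-homogeneous data $u_D,f$ are handled correctly in the residual and the single boundary-consistency term.
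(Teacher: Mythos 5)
Your proposal is correct and matches the paper exactly: the paper's own proof of Theorem \ref{th2} consists of the single remark that it is obtained by adapting (and simplifying) the proof of Theorem \ref{th1}, which is precisely the specialization you carry out — dropping the rigid-body unknowns, re-using Lemmas \ref{lemma:poly}--\ref{lemma:magic} and the coercivity/C\'ea/duality structure of Sections \ref{sec:coer}--\ref{sec:a priori}, and handling $u_D$, $f$ in the residual and in the boundary-consistency term via $u-u_h=-(\phi_h-\phi)w_h$ on $\Gamma$. Your observation that the transfer to $\tilde B_h$ simplifies because $u_h-\phi_h w_h$ vanishes there (rather than being a rigid motion) is exactly the sense in which the paper calls this case ``even more simple.''
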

The proof of Theorem \ref{th2} can be adapted from the proof of Theorem
\ref{th1}. It is even more simple.

\section{\AL{A glossary of geometrical notations.}}\label{AppGlossary}
\AL{\begin{tabular}{l|l}
     $\Gamma_h$ & the approximate interface: $\Gamma_h=\{\phi_h=0\}$ \\
     $\Th$ & the active mesh: $\Th=\{T\in\Th^\mathcal{O}:T\cap\{\phi_h<0\}\neq\varnothing\}$ \\
     $\Th^\Gamma$ & intersection of  $\Th$ with $\Gamma_h$  $\Th^{\Gamma}=\{T\in\Th:T\cap\Gamma_h\neq\varnothing\}$ \\
     $\Th^{\Gamma,ext}$ & $\mathcal{T}_h^{\Gamma}$ and the cells which are neighbors  and neighbors of neighbors of cells \\&\hfill of $\mathcal{T}_h^{\Gamma}$ in  $\Th$\\
     $\Th^i$ & $\mathcal{T}_h \setminus  \mathcal{T}_h^{\Gamma}$ \\
     $\mathcal{F}_h^\Gamma$ & $ \mathcal{F}_h^{\Gamma} = \{E \text{ (an internal facet of } \mathcal{T}_h)
  \text{ such that } \exists T \in \mathcal{T}_h^\Gamma \text{ and } E \in \partial T\}.$\\
     $G_h$ & the internal component of $\partial\Omega_h$, corresponding to the interface $\Gamma$: $G_h=\partial\Omega_h\setminus\Gamma_w$ \\
     $B_h$ & the strip between $\Gamma$ and $G_h$: $B_h = \Omega_h \setminus \Omega$\\
     $B_h^{\Gamma}$ & the strip between $\Gamma$ and $\Gamma_h$ \\
     $\tilde{B}_h$ & the strip between $\Gamma_h=\{\phi_h=0\}$ and $G_h$: $\tilde{B}_h = \{\phi_h > 0\} \cap \Omega_h$ \\
\end{tabular}}

\AL{We also recall that the domain occupied by the active mesh $\Th$ is denoted by  $\Omega_h$, i.e. $\Omega_h:=\left(\cup_{T\in\Th}T\right)^o$. The same convention is applied to the submeshes $\Th^\Gamma$ and $\Th^i$, giving respectively $\Omega_h^\Gamma$ and $\Omega_h^i$.
}

%

\end{document}